\newtheorem{thm}{Theorem}[section]
\newtheorem{lem}[thm]{Lemma}
\newtheorem{prop}[thm]{Proposition}
\newtheorem{cor}[thm]{Corollary}
\theoremstyle{definition}
\theoremstyle{remark}
\newtheorem{rem}[thm]{Remark}
\theoremstyle{Fact}
\theoremstyle{Example}
\numberwithin{equation}{section}
\def\tagform@#1{\maketag@@@{\ignorespaces#1\unskip\@@italiccorr}}
\let\orgtheequation\theequation
\def\theequation{(\orgtheequation)}
 \newcommand{\R}{\mathbb{R}}
 \newcommand{\T}{\mathbb{T}}
 \newcommand\Z{{\mathbb Z}}
 \newcommand\e{{\epsilon}}
 \newcommand\w{\omega}
 \newcommand\uu{\mathrm{u}}
 \newcommand\vv{\mathrm{v}}
 \newcommand\ww{\mathrm{w}}
 \newcommand\dx{\partial_x}
 \newcommand\dt{\partial_t}
 \newcommand\dr{\partial_r}
 \newcommand\da{\partial_a}
 \newcommand\dww{\partial_\ww}
\begin{document}

\title[Axisymmetric Hydrostatic Euler Equations]{Axisymmetric Flow of Ideal Fluid Moving in a Narrow Domain: a Study of the Axisymmetric
Hydrostatic Euler Equations}

\author{Robert M. Strain}
%\address{Department of Mathematics, University of Pennsylvania, Philadelphia, PA 19104, USA.}
%\email{strain@math.upenn.edu}
\thanks{R.M.S. was partially supported by the NSF grants DMS-1200747 and DMS-1500916.}

\author{Tak Kwong Wong}
\address{Department of Mathematics\\ University of Pennsylvania\\
David Rittenhouse Lab.\\
209 South 33rd Street \\
Philadelphia, PA 19104-6395, USA}
%\curraddr{}
\email{strain@math.upenn.edu}
\email{takwong@math.upenn.edu}
%\thanks{}

%    \subjclass is required.
%\subjclass[2010]{Primary }

\date{\today}

%\dedicatory{}

%    "Communicated by" -- provide editor's name; required.
%\commby{}

\begin{abstract}
In this article we will introduce a new model to describe the leading order behavior of an ideal and axisymmetric fluid moving in a very narrow domain. After providing a formal derivation of the model, we will prove the well-posedness and provide a rigorous mathematical justification for the formal derivation under a new sign condition. Finally, a blowup result regarding this model will be discussed as well.
\end{abstract}

\setcounter{tocdepth}{2} %use 1 or 2
\maketitle
\tableofcontents

%==============================================================================
%
%   Section 1  Introduction
%
%==============================================================================
\section{Introduction}\label{s:Intro}
In various applications in meteorology, oceanography, atmospheric dynamics, blood flow and pipeline transport, the vertical or radial length scale of the underlying flow is usually small compared to the horizontal length scale. To study these problems, the standard approach is to apply the hydrostatic approximations. For example, when a two-dimensional ideal fluid moves in a fixed and very narrow channel, one can describe the leading order behavior of the fluid motion by the two-dimensional hydrostatic Euler equations, which can be formally derived by the hydrostatic limit \cite{Lio96} or the least action principle \cite{Bre08}. Under the local Rayleigh condition, the formal derivation of the two-dimensional hydrostatic Euler equations via the hydrostatic limit was rigorously justified in \cite{Gre99,Bre03,MW12}. Without the local Rayleigh condition, the formal derivation may not be valid \cite{Gre99,Gre00}. The local-in-time existence and uniqueness under the analyticity assumption \cite{KTVZ11}, the local Rayleigh condition \cite{Bre99,MW12}, or their combinations in different regions \cite{KMVW14} are also known, but the global-in-time existence is still open. Furthermore, for a general initial data, the two-dimensional hydrostatic Euler equations are somewhat ill-posed: see \cite{Ren09} for the linearized instability, and \cite{CINT15,Won15} for the formation of singularities.

In this paper, we study the leading order behavior of axisymmetric and ideal flows moving in a very narrow domain in three spatial dimensions. The prime objectives of this paper are as follows:
\begin{enumerate}[(i)]
\item to formally derive the axisymmetric hydrostatic Euler equations, which describe the leading order behavior of axisymmetric Euler flows moving in a thin tube, via the hydrostatic limit (see Subsection \ref{ss:FormalDerivation});
\item to introduce a \emph{new} sign condition (see inequality \eqref{e:sign} below), which is an analogue of the local Rayleigh condition in two spatial dimensions, for the axisymmetric hydrostatic Euler equations in three spatial dimensions;
\item to prove the well-posedness of the axisymmetric hydrostatic Euler equations under the new sign condition (see Theorem \ref{thm:Well-posed}, Sections \ref{s:APrioriEstimates} and \ref{s:Existence}, as well as Appendix \ref{app:3DSmoothVectorFields});
\item to provide a rigorous mathematical justification of the formal derivation for the axisymmetric hydrostatic Euler equations under the new sign condition (see Theorem \ref{thm:MathJustification} and Section \ref{s:MathematicalJustification});
\item to discuss the finite time blowup of smooth solutions of the axisymmetric hydrostatic Euler equations (see Theorem \ref{thm:FormationofSingularity} and Section \ref{s:FormationofSingularity}).
\end{enumerate}
To the best of our knowledge, these issues have not been studied in the literature.

The main difficulty for the study of axisymmetric hydrostatic Euler equations is the loss of the horizontal regularity. Similar to the two-dimensional hydrostatic Euler equations, the axisymmetric hydrostatic Euler equations are also derived by a singular limit process, called the hydrostatic limit in the literature. The hydrostatic limit/approximation usually simplifies the pressure term, but creates a loss of one horizontal derivative. Due to the regularity loss in the horizontal direction, standard energy methods do not apply in general.

The main novelty of this work is to introduce a new sign condition, which is an analogue of the local Rayleigh condition in two spatial dimensions. Under this sign condition, the horizontal regularity loss can be avoided by the nonlinear cancelation \eqref{e:NonlinearCancelation} below. As a result, the $H^s$ theory for the axisymmetric hydrostatic Euler equations can be established under this new sign condition by using the standard energy method. Furthermore, in order to simplify the computations, we will make use of new differential operator and dependent variables, which will be stated in \eqref{e:NewVar&Operator} below. Under these new differential operator and dependent variables, the vorticity system for the axisymmetric hydrostatic Euler equations is equivalent to that for the two-dimensional hydrostatic Euler equations in a certain sense. Therefore, the analysis in this paper is similar to that in \cite{Bre03,MW12}.

The rest of this paper is organized as follows. First of all, we will provide the formal derivation of the axisymmetric hydrostatic Euler equations, introduce the \emph{new} sign condition (i.e., inequality \eqref{e:sign}) and state our main results (i.e., Theorems \ref{thm:Well-posed}, \ref{thm:MathJustification} and \ref{thm:FormationofSingularity}) in Section \ref{s:FormalDerivation&MainResults}. In Section \ref{s:APrioriEstimates} we will derive a priori estimates, and apply these estimates to prove the uniqueness and stability. The existence will be shown in Section \ref{s:Existence}. Using the entropy method, we will provide a rigorous mathematical justification of the formal derivation for the axisymmetric hydrostatic Euler equations in Section \ref{s:MathematicalJustification}. Finally, the blowup result will be discussed in Section \ref{s:FormationofSingularity}. For the sake of self-containedness, we will also provide elementary proofs in Appendices \ref{app:BasicPropertiesforAHEE}-\ref{app:3DSmoothVectorFields}.

Let us end this introduction by commenting on our notation. Throughout this paper, all constants with or without subscript(s) may be different in different lines. Unless mentioned otherwise, a constant with subscript(s) illustrates the dependence of the constant, for example, $C_{s,\sigma}$ is a constant depending on $s$ and $\sigma$ only.

%==============================================================================
%
%   Section 2  Formal Derivation and Main Results
%
%==============================================================================
\section{Formal Derivation and Main Results}\label{s:FormalDerivation&MainResults}
In this section, we will first provide a heuristic derivation of the axisymmetric hydrostatic Euler equations in Subsection \ref{ss:FormalDerivation}, and then state the main results of this paper in Subsection \ref{ss:MainResults}.

%------------------------------------------------------------------------------
%
%   Subsection 2.1  Formal Derivation
%
%------------------------------------------------------------------------------
\subsection{Formal Derivation}\label{ss:FormalDerivation}
The aim of this subsection is to formally derive the axisymmetric hydrostatic Euler equations via a rescaling limit. This rescaling limit, called the hydrostatic limit, can be described as follows.

An ideal fluid moving in a periodic and narrow domain\footnote{In order to simplify our presentation, we only consider
  a periodic tube (i.e., $X\in\T:=\R/\Z$), but one could consider an infinite tube (i.e., $X \in \R$) or other physical
  domains. } $\Omega_\e:=\{(X,Y,Z);\;X\in \T:=\R/\Z,\;Y^2+Z^2<\e^2\}$
is governed by the usual three-dimensional incompressible Euler equations:
%
%   Equation (2.1)
%
\begin{equation}\label{e:3DEuler}
\left\{\begin{aligned}
\dt \vec{U}_\e +\vec{U}_\e  \cdot \nabla \vec{U}_\e & := - \nabla P_\e && \text{ in }
(0,T) \times \Omega_\e\\
{\rm div}\; \vec{U}_\e & = 0 && \text{ in } (0,T) \times \Omega_\e\\
\vec{U}_\e  \cdot \hat{n}|_{\partial \Omega_\e} & = 0\\
\vec{U}_\e |_{t=0} & = \vec{U}_{\e 0},
\end{aligned}\right.
\end{equation}
where $\vec{U}_\e$ is an unknown velocity field, $P_\e$ is an unknown scalar pressure,
$\vec{U}_{\e 0}$ is a given initial velocity field, and $\hat{n}$ is the outward unit normal vector to the boundary $\partial
\Omega_\e$.

Now, let us assume that the underlying flow is axisymmetric without swirl. In other words, using the cylindrical
coordinates
\[\begin{aligned}
X&:=X,& Y&:=R \cos \Theta,\text{ and}& Z &:= R \sin \Theta,
\end{aligned}\]
we can express the velocity field $\vec{U}_\e$ and the pressure $P_\e$ as follows:
%
%   Equation (2.2)
%
\begin{equation}\label{e:Axisymmetric}
\left\{\begin{aligned}
\vec{U}_\e & := U^X_\e (t,X,R) e_X + U^R_\e (t,X,R) e_R\\
P_\e & := P_\e (t,X,R),
\end{aligned}\right.
\end{equation}
where $e_X$ and $e_R$ are the unit vectors in the $X$ and $R$ directions respectively. Under the
assumption \eqref{e:Axisymmetric}, the usual incompressible Euler equations \eqref{e:3DEuler} become
the axisymmetric Euler equations: for $(t,X,R) \in (0,T) \times \T \times (0,\e)$,
%
%   Equation (2.3)
%
\begin{equation}\label{e:AEE}
\left\{\begin{aligned}
\dt U^X_\e + U^X_\e \partial_X U^X_\e + U^R_\e \partial_R U^X_\e & = - \partial_X P_\e\\
\dt U^R_\e + U^X_\e \partial_X U^R_\e + U^R_\e \partial_R U^R_\e & = - \partial_R P_\e\\
\partial_X U^X_\e + \frac{1}{R} \partial_R (R U^R_\e) & = 0\\
 U^R_\e |_{R=\e} & = 0\\
 U^X_\e|_{t=0} & = U^X_{\e 0}.
\end{aligned}\right.
\end{equation}
Here, we only have to impose the initial condition for $U^X_\e$ because that for $U^R_\e$ can
be uniquely determined by using $\eqref{e:AEE}_3$-$\eqref{e:AEE}_5$.

In order to study the leading order behavior of the flow as the thickness $\e$ goes to $0^+$, the standard approach is to rescale the physical domains into a uniform domain. More precisely, we apply the rescaling
\begin{equation}\label{e:rescaling}
\left\{\begin{aligned}
x&:=X,&   r&:= \frac{R}{\e},\\
U^X_\e(t,X,R)&:=u^x_\e\left(t,X,\frac{R}{\e}\right),& U^R_\e(t,X,R)&:= \e u^r_\e \left(t,X,\frac{R}{\e}\right),\\
P_\e (t,X,R) &:= p_\e \left(t,X,\frac{R}{\e}\right)
\end{aligned}\right.
\end{equation}
to rewrite the system \eqref{e:AEE} as the axisymmetric rescaled Euler equations: for $(t,x,r) \in (0,T) \times
\T \times (0,1)$,
\begin{equation}\label{e:AREE}
\left\{\begin{aligned}
\dt u^x_\e + u^x_\e \dx u^x_\e + u^r_\e \dr u^x_\e & = - \dx p_\e \\
\e^2 (\dt u^r_\e + u^x_\e \dx u^r_\e + u^r_\e \dr u^r_\e) & = - \dr p_\e \\
\dx u^x_\e +\frac{1}{r} \dr (r u^r_\e) & = 0 \\
u^r_\e|_{r=1} & = 0\\
u^x_\e|_{t=0} & = u^x_{\e 0},
\end{aligned}\right.
\end{equation}
where $(u^x_\e, u^r_\e)$ and $p_\e$ are unknowns, and $u^x_{\e 0}$ is the given initial horizontal velocity.

Formally, if $(u^x_\e, u^r_\e, p_\e)$ converges to $(u^x, u^r, p)$ as $\e$ goes to $0^+$, then $(u^x, u^r,p)$
will satisfy the axisymmetric hydrostatic Euler equations: for $(t,x,r) \in (0,T) \times \T \times (0,1)$,
\begin{equation}\label{e:AHEE2.1}
\left\{\begin{aligned}
\dt u^x+u^x\dx u^x + u^r \dr u^x & = -\dx p \\
\dr p & = 0 \\
\dx u^x + \frac{1}{r} \dr (ru^r) & = 0 \\
u^r |_{r=1} & = 0\\
u^x|_{t=0} & = u^x_0.
\end{aligned}\right.
\end{equation}
It is worth noting that equation $\eqref{e:AHEE2.1}_2$ is equivalent to the fact that the scalar pressure $p$ is
independent of $r$, i.e., $p:=p(t,x)$. Therefore, system \eqref{e:AHEE2.1} is equivalent to system \eqref{e:AHEE}
below provided that $p$ is assumed to be independent of $r$.

\emph{The system \eqref{e:AHEE2.1} formally describes the leading order behavior of axisymmetric and ideal flows
moving in the narrow domain $\Omega_\e$.} The above limiting process is called the hydrostatic limit.

Let us end this subsection by explaining why the vorticity $\w$ for the axisymmetric hydrostatic Euler equations \eqref{e:AHEE2.1} (or equivalently, \eqref{e:AHEE}) is $\dr u^x$ as follows:
%
% Remark 2.1
%
\begin{rem}[Vorticity for the Axisymmetric Hydrostatic Euler Equations]\label{rem:vorticityforAHEE}
It is well-known that the vorticity for the axisymmetric (without swirl) velocity field $\eqref{e:Axisymmetric}_1$ is just $-(\partial_R U^X_\e - \partial_X U^R_\e)e_\Theta$, where $e_\Theta$ is the unit vector in the $\Theta$ direction. Applying the rescaling \eqref{e:rescaling}, we have
\[
\partial_R U^X_\e - \partial_X U^R_\e = \frac{1}{\e} \left( \dr u^x_\e - \e^2 \dx u^r_\e \right).
\]
As $\e \to 0^+$, the quantity $\dr u^x_\e - \e^2 \dx u^r_\e$ converges to $\dr u^x$ formally. Therefore, we denote the vorticity for the axisymmetric hydrostatic Euler equations \eqref{e:AHEE2.1} (or equivalently, \eqref{e:AHEE}) as the scalar quantity $\dr u^x$, which is corresponding to the leading order term of the vorticity as $\e \to 0^+$.
\end{rem}

%------------------------------------------------------------------------------
%
%   Subsection 2.2  Main Results
%
%------------------------------------------------------------------------------
\subsection{Main Results}\label{ss:MainResults}
In this subsection we will introduce the axisymmetric hydrostatic Euler equations, the new sign
condition and our main results.

According to the formal derivation in Subsection \ref{ss:FormalDerivation}, the leading order behavior of an
axisymmetric (without swirl) ideal flow moving in a periodic and narrow channel can be described by the
axisymmetric hydrostatic Euler equations: for $(t,x,r) \in (0,T) \times \T \times (0,1)$,
\begin{equation}\label{e:AHEE}
\left\{\begin{aligned}
\dt u^x + u^x \dx u^x + u^r \dr u^x & = - \dx p \\
\dx u^x + \frac{1}{r} \dr(ru^r) & = 0 \\
u^r|_{r=1} & = 0\\
u^x|_{t=0} & = u^x_0,
\end{aligned}\right.
\end{equation}
where the axisymmetric (without swirl) velocity field $\vec{u}:=u^x(t,x,r)e_x+u^r(t,x,r)e_r$ and the
scalar pressure $p:=p(t,x)$ are unknowns, $u^x_0:=u^x_0(x,r)$ is the given initial horizontal velocity,
$e_x$ and $e_r$ are the unit vectors in the horizontal (i.e., $x$) and radial (i.e., $r$) directions respectively.

Regarding the axisymmetric hydrostatic Euler equations \eqref{e:AHEE}, there are at least three fundamental problems:
\begin{enumerate}[(i)]
\item the local-in-time well-posedness;
\item the mathematical justification of the formal derivation; and
\item the formation of singularity.
\end{enumerate}
In this paper we address all these problems. More specifically, we will first introduce a new sign condition, under
which we will prove the local-in-time well-posedness of $H^s$ solutions to \eqref{e:AHEE} and justify the formal derivation in the
$L^2$ sense. Furthermore, we will also show that for a certain class of initial data, smooth solutions to \eqref{e:AHEE} blow up in finite
time.

When studying the well-posedness and the mathematical justification of the formal derivation (i.e., problems (i) and
(ii) above), one encounter the following

\begin{quotation}
\underline{\textbf{Structural Difficulty:}}\\*
\indent
The radial velocity component $u^r =-\frac{1}{r} \dr^{-1} (r\dx u^x)$ creates a loss of one $x$-derivative, so the standard
energy methods typically fail.
\end{quotation}

This structural difficulty can be overcome if we consider the problems under the following sign
condition:
\begin{equation}\label{e:sign}
\frac{1}{r}\dr \left(\frac{1}{r} \dr u^x\right) \ge \sigma >0
\end{equation}
for some constant $\sigma$. To the best of our knowledge, the sign condition \eqref{e:sign} is new, and analogous to the local Rayleigh condition
for the two dimensional flow. Under the sign condition \eqref{e:sign}, we can avoid the structural difficulty by eliminating the
problematic term during the estimation. This elimination is based on the following nonlinear cancelation for the
axisymmetric hydrostatic Euler equations \eqref{e:AHEE}: defining the vorticity $\w := \dr u^x$, for any $k = 0$, $1$, $2$, $\cdots,$
\begin{equation}\label{e:NonlinearCancelation}
\int_\T \int^1_0 \dx^k u^r \dx^k \w \; rdrdx = \frac{1}{2} \int_\T \int^1_0 \dx \left( |\dx^k u^x|^2\right)\; rdrdx = 0.
\end{equation}
This nonlinear cancelation is analogous to the nonlinear cancelation (2.3) stated in \cite{MW12} for the two-dimensional
flow as well.

Regarding the well-posedness (i.e., problem (i) above), we will solve the axisymmetric hydrostatic Euler equations
\eqref{e:AHEE} as long as the quantity $L_r u^x := \frac{1}{r} \dr u^x$ belongs to the function space
\begin{equation}\label{e:DefnofH^s_Lsigma}
H^s_{L,\sigma} := \left\{ \ww:\T \times (0,1)\to\R;\; \|\ww\|_{H^s_L} < + \infty \text{ and } \sigma \le L_r \ww \le \frac{1}{\sigma}\right\}
\end{equation}
for some integer $s\ge 4$ and constant $\sigma \in (0,1)$, where the operator $L_r:= \frac{1}{r} \dr$, and the norm
$\|\cdot\|_{H^s_L}$ is defined by
\begin{equation}\label{e:defnH}
\|\ww\|^2_{H^s_L} := \sum_{|\alpha|\le s} \|\dx^{\alpha_1} L_r^{\alpha_2} \ww\|^2_{L^2(rdrdx)}
:= \sum_{|\alpha| \le s} \int_\T \int^1_0 |\dx^{\alpha_1} L_r^{\alpha_2} \ww|^2 \; rdrdx.
\end{equation}
Roughly speaking\footnote{It is worth noting that if $\ww\in H^s_L$, then the upper bound $L_r \ww \le \frac{1}{\sigma}$ can always be guaranteed by the Sobolev inequality \eqref{e:sobolevinq} provided that $\sigma >0$ is chosen to be sufficiently small. In particular, there is no mathematical or physical meaning for the upper bound $\frac{1}{\sigma}$. In other words, one can choose other upper bounds instead. We just require the $L^\infty$ boundedness for $L_r \ww$ so that the $H^s_L$ norm \eqref{e:defnH} is equivalent to the weighted $H^s_L$ energy \eqref{e:defnWH}.}, we require that the horizontal velocity component $u^x$ satisfies the sign condition \eqref{e:sign}, and the quantity $L_r u^x := \frac{1}{r}
\dr u^x$ belongs to the function space
\begin{equation}\label{e:DefnofH^s_L}
H^s_L := \{\ww:\T \times (0,1)\to\R;\; \|\ww\|_{H^s_L} < + \infty\}.
\end{equation}

More precisely, we will prove the following main theorem:
%
%   Theorem 2.2
%
\begin{thm}[Well-posedness for the Axisymmetric Hydrostatic Euler Equations \eqref{e:AHEE}]\label{thm:Well-posed}
For any integer $s\ge 4$ and constant $\sigma > 0$, if the given initial data $u^x_0$ satisfies the following two hypotheses:
\begin{enumerate}[(i)]
\item (regularity and sign conditions) $\ww_0 := L_r u^x_0 := \frac{1}{r} \dr u^x_0 \in H^s_{L,2\sigma}$,
\item (compatibility condition) $\int^1_0 u^x_0 \; r dr \equiv \lambda \equiv$ constant,
\end{enumerate}
then there exist a time $T := T(s,\sigma, \|\ww_0\|_{H^s_L}) > 0$, a unique axisymmetric velocity field $\vec{u}:= u^x e_x
+ u^r e_r$, and a unique (up to an additional function of $t$) scalar pressure $p$ such that
\begin{enumerate}[(i)]
\item $(\vec{u},p)$ solves the axisymmetric hydrostatic Euler equations \eqref{e:AHEE} classically,
\item $\ww := L_r u^x := \frac{1}{r} \dr u^x \in C([0,T]; H^s_{L,\sigma}) \cap C^1 ([0,T]; H^{s-1}_L)$, and
\item $\int^1_0 u^x \; r dr \equiv \int^1_0 u^x_0 \; r dr \equiv \lambda$.
\end{enumerate}
Furthermore, the system \eqref{e:AHEE} is stable in the following sense: for any integer $s'\in[0,s)$, and any two solutions $(\vec{u_1},p_1):=(u_1^x e_x+ u_1^r e_r,p_1)$ and  $(\vec{u_2},p_2):=(u_2^x e_x+ u_2^r e_r,p_2)$ of system \eqref{e:AHEE}, if $\ww_i := L_r u^x_i:=\frac{1}{r} \dr u^x_i \in C([0,T]; H^s_{L,\sigma}) \cap C^1 ([0,T]; H^{s-1}_L)$ and $\int^1_0 u^x_i \; r dr \equiv \lambda\equiv$ constant for $i=1$, $2$, then there exists a constant $C_{s,s',\sigma,T,M}>0$ such that
\begin{equation}\label{e:StabilityEsts}
\|\ww_1-\ww_2\|_{C([0,T];H^{s'}_L)} \le C_{s,s',\sigma,T,M} \|(\ww_1-\ww_2)|_{t=0}\|_{L^2(rdrdx)}^{1-\frac{s'}{s}},
\end{equation}
where $M:=\max\left\{\sup_{[0,T]}\|\ww_1\|_{H^{s}_L},\sup_{[0,T]}\|\ww_2\|_{H^{s}_L}\right\}$, and the $H^s_L$ norm is defined by \eqref{e:defnH}.

Moreover, for any $t\in [0,T]$, the unique axisymmetric velocity field $\vec{u}(t)$ is also a $C^{s-2}$ vector field in three spatial dimensions.
\end{thm}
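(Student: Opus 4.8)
The plan is to rewrite the whole system as a single transport equation for a vorticity-type quantity and then run a weighted energy method in which the Structural Difficulty is annihilated by the sign condition \eqref{e:sign}. \emph{Reformulation.} Set $\w:=\dr u^x$ and $\ww:=L_r u^x=\w/r$. From incompressibility and $u^r|_{r=1}=0$ one recovers $u^r=-\frac1r\dr^{-1}(r\dx u^x)$ (consistency at $r=1$ being exactly the compatibility condition); from $\dr u^x=r\ww$ one recovers $u^x$ up to its value on the axis, which is fixed by $\int_0^1 u^x\,rdr\equiv\lambda$; and demanding that this mean stay constant forces $p(t,x)=c(t)-2\int_0^1|u^x|^2\,rdr$, so $p$ is pinned down up to an additive function of $t$. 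A direct computation (Appendix~\ref{app:BasicPropertiesforAHEE}) shows that under these reconstructions \eqref{e:AHEE} is equivalent to
\[
\dt\ww+u^x\dx\ww+u^r\dr\ww=0,\qquad\text{and moreover}\qquad D_t(L_r\ww)=(\dx u^x)\,L_r\ww-\ww\,\dx\ww ,
\]
where $D_t:=\dt+u^x\dx+u^r\dr$, and the sign condition \eqref{e:sign} is precisely $L_r\ww\ge\sigma$.

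\emph{A priori estimates (the heart).} I would control the weighted energy $\mathcal E:=\sum_{|\alpha|\le s}\int_\T\int_0^1|\dx^{\alpha_1}L_r^{\alpha_2}\ww|^2\,(L_r\ww)^{-1}\,rdrdx$, which is equivalent to $\|\ww\|_{H^s_L}^2$ on the region $\sigma\le L_r\ww\le 1/\sigma$. Applying $\dx^{\alpha_1}L_r^{\alpha_2}$ to the transport equation and pairing with $(L_r\ww)^{-1}\dx^{\alpha_1}L_r^{\alpha_2}\ww$ against $rdrdx$, the convective terms together with the evolution of the weight $(L_r\ww)^{-1}$ telescope into $-\int_\T\int_0^1(u^x\dx+u^r\dr)(\,\cdot\,)\,rdrdx=0$ plus terms bounded by $C\mathcal E$ (this uses the displayed equation for $D_t(L_r\ww)$ and the two-sided bound on $L_r\ww$). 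Every commutator is controlled by Gagliardo--Nirenberg and Sobolev inequalities --- here $s\ge4$ is what makes $u^x,u^r$ Lipschitz with the needed bounds --- \emph{except} the top-order term in which all $s$ copies of $\dx$ fall on $u^r$, which loses a horizontal derivative: this is exactly the Structural Difficulty. The weight is designed precisely so that this term equals $-2\int_\T\int_0^1\dx^s u^r\,\dx^s\w\,rdrdx$, which \emph{vanishes} by the nonlinear cancellation \eqref{e:NonlinearCancelation}. One thus obtains $\frac{d}{dt}\mathcal E\le C(\mathcal E)$ for a continuous $C$, hence a uniform bound on $[0,T]$ with $T=T(s,\sigma,\|\ww_0\|_{H^s_L})$, and $\ww\in C^1([0,T];H^{s-1}_L)$ follows from the equation.

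\emph{Sign propagation, uniqueness, stability.} The ODE $D_t(L_r\ww)=(\dx u^x)L_r\ww-\ww\dx\ww$ along particle paths, with coefficients bounded by $C(M)$, gives via Gr\"onwall that $L_r\ww\ge2\sigma$ at $t=0$ propagates to $\sigma\le L_r\ww\le1/\sigma$ on a (possibly shorter) interval $[0,T]$, so $\ww\in C([0,T];H^s_{L,\sigma})$. For two solutions I would estimate a weighted $L^2$ energy for $V:=\ww_1-\ww_2$; the difference version of \eqref{e:NonlinearCancelation}, namely $\int_\T\int_0^1(\ww_1-\ww_2)(u^r_1-u^r_2)\,r^2drdx=0$ (proved just as \eqref{e:NonlinearCancelation}, using $(u^r_1-u^r_2)|_{r=1}=0$ and $\int_0^1 r(u^x_1-u^x_2)dr=0$), removes the leading derivative-losing contribution, and the surviving half-order-losing terms are absorbed using \eqref{e:sign} together with a Gagliardo--Nirenberg interpolation against the uniform bound $M$; this produces $\|V(t)\|_{L^2(rdrdx)}\le C_{s,\sigma,T,M}\|V(0)\|_{L^2(rdrdx)}$ --- hence uniqueness, by iteration in time --- and then \eqref{e:StabilityEsts} follows from $\|V\|_{H^{s'}_L}\lesssim\|V\|_{L^2}^{1-s'/s}\|V\|_{H^s_L}^{s'/s}$ and $\|V\|_{H^s_L}\le 2M$.

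\emph{Existence and the three-dimensional statement.} Existence I would obtain from an approximation scheme for the $\ww$-equation --- a vanishing-viscosity regularization compatible with the weighted energy, or an iteration that linearizes the transport while keeping incompressibility exact so the cancellation survives at each step --- using the bounds above for uniform estimates on a common interval and the stability estimate for convergence in a lower norm, upgrading weak to strong continuity in $H^s_{L,\sigma}$ via the energy identity; one then reconstructs $(u^x,u^r,p)$ as above, verifies \eqref{e:AHEE} classically, and checks that $\int_0^1 u^x\,rdr\equiv\lambda$ is preserved (this is built into the choice of $p$). Finally, the $L_r$-regularity of $\ww$ is exactly what lets $u^x$ extend to a $C^{s-2}$ function of the Cartesian variables $(X,Y,Z)$ near the axis, and $u^r/r$ likewise, so the lemma of Appendix~\ref{app:3DSmoothVectorFields} shows $\vec u(t)$ is a $C^{s-2}$ axisymmetric vector field in three dimensions. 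The main obstacle is making the weighted $L^2$ estimate for $V$ genuinely close: once the difference cancellation removes the top-order term, the remaining half-order-losing terms must be tamed by a delicate combination of \eqref{e:sign} and \eqref{e:NonlinearCancelation}, in the spirit of the two-dimensional analysis \cite{MW12}, and this constraint also dictates which approximation scheme is admissible in the existence step.
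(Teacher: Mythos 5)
Your a priori framework --- the transport equation for $\ww:=L_r u^x$, the weighted energy with weight $(L_r\ww)^{-1}$ whose sole purpose is to let the cancelation \eqref{e:NonlinearCancelation} annihilate the top-order term $\dx^s\vv\,L_r\ww$, the propagation of the sign condition through the evolution equation for $L_r\ww$, the weighted $L^2$ estimate for the difference of two solutions with the difference cancelation $\int_\T\int_0^1(\vv_1-\vv_2)(\ww_1-\ww_2)\,rdrdx=0$ followed by interpolation to get \eqref{e:StabilityEsts}, and the reduction of the three-dimensional $C^{s-2}$ statement to pole conditions at the axis --- is essentially the proof given in Section \ref{s:APrioriEstimates} and Appendix \ref{app:3DSmoothVectorFields}. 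Two cosmetic remarks: the paper places the weight only on the purely horizontal top term $\dx^s\ww$ (see \eqref{e:defnWH}), since that is the only place the derivative loss occurs; and in the $L^2$ comparison no ``half-order-losing'' remainders actually survive --- once $\int\tilde{\vv}\tilde{\ww}\,rdrdx$ is removed, every remaining term is controlled directly by $\|\ww_1-\ww_2\|_{L^2(rdrdx)}$, so no further interplay of \eqref{e:sign} and \eqref{e:NonlinearCancelation} is needed there, and Gr\"{o}nwall gives uniqueness on all of $[0,T]$ without iteration in time.

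The genuine gap is the existence step, which you leave at ``a vanishing-viscosity regularization compatible with the weighted energy, or an iteration that linearizes the transport while keeping incompressibility exact so the cancellation survives at each step.'' The second alternative fails as stated: the identity $\int\dx^s\vv\,\dx^s\ww\,rdrdx=\frac{1}{2}\int\dx\left(|\dx^s\uu|^2\right)rdrdx=0$ is genuinely nonlinear --- it requires $\vv$ and $\ww$ to be built from the \emph{same} $\uu$ --- so in a linearized scheme $\dt\ww^{n+1}+\uu^n\dx\ww^{n+1}+\vv^n L_r\ww^{n+1}=0$ the problematic pairing $\int\dx^s\vv^{n}\,\dx^s\ww^{n+1}\,rdrdx$ does not vanish, the $x$-derivative loss reappears, and no bound uniform in $n$ is available. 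The vanishing-viscosity route is not ruled out, but you would still need to specify boundary conditions at $r=0,1$ for the regularized problem and verify that the weighted estimate and the sign propagation survive uniformly, none of which is addressed. The paper resolves this in two concrete ways: (i) an approximate system obtained by inserting the Fourier truncation $P_N$ in $x$ into the Biot--Savart formulae, for which at fixed $N$ there is no derivative loss (so a linearized iteration is legitimate there), the structures \eqref{e:VFAHEENV3.2}--\eqref{e:NonlinearCancelationNV} still hold exactly so Proposition \ref{prop:APrioriEst} gives bounds uniform in $N$, and convergence of the family follows from the ``almost cancelation'' estimate \eqref{e:AlmostNonlinearCancelationNVfor2ApproxSolns} of size $O(N_2^{-2})$; or (ii) the change of variables $a=\frac{1}{2}r^2$, under which $L_r=\da$ and $rdr=da$, turning the vorticity system verbatim into the two-dimensional hydrostatic Euler vorticity system so that existence is imported from Theorem 2.5 of \cite{MW12}. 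Supplying one of these (or a fully worked-out viscous approximation) is what your proposal is missing.
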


%
%   Remark 2.3
%
\begin{rem}
\hfil
\begin{enumerate}[(i)]
\item (Intuition behind Norm \eqref{e:defnH}) Indeed, the norm \eqref{e:defnH} is natural for axisymmetric functions. First of all, viewing all axisymmetric functions as a special class of functions defined in a three-dimensional domain, one can see that the volume element is just a $2\pi$ multiple of $rdrdx$ because axisymmetric functions are independent of the azimuthal coordinate $\theta$. Therefore, $rdrdx$ is an appropriate and natural measure for the axisymmetric physical quantities. Regarding the measure $rdr$, the differential operator that can guarantee the fundamental theorem of calculus \eqref{e:FTCforL_r&rdr} is $L_r := \frac{1}{r}\dr$, so it is also natural to measure the radial regularity for axisymmetric functions by using the differentiation with respect to $L_r$ instead of $\dr$.
\item (Structure of the Proof) The proof of Theorem \ref{thm:Well-posed} is long, and will be separated into different parts: in Section \ref{s:APrioriEstimates} we will derive a priori estimates and prove the uniqueness and stability; in Section \ref{s:Existence} we will show the existence by using an approximate scheme (see Subsection \ref{ss:ExistenceviaApproximateSystem}) or a reduction argument (see Subsection \ref{ss:ExistenceviaReduction}); finally, we will verify the $C^{s-2}$ regularity of $\vec{u}(t)$ in Appendix \ref{app:3DSmoothVectorFields}.
\item (Solvability in Non-periodic Domains) In Theorem \ref{thm:Well-posed} we solve the axisymmetric hydrostatic Euler equations \eqref{e:AHEE} in the $x$-periodic domain $\T\times (0,1):=\{ (x,r); \; x\in\T,\; r\in(0,1) \}$. However, it is just for presentation convenience. For example, one may apply the result in Theorem \ref{thm:Well-posed}, the finite speed of propagation in the $x$-direction, and an argument similar in \cite{KMVW14} to solve the axisymmetric hydrostatic Euler equations \eqref{e:AHEE} in other non-periodic domain like $\R\times (0,1)$. For this purpose, one should also consider the uniformly local $H^s_L$ spaces:
\[
H^s_{L,uloc} := \left\{ \ww:\R \times (0,1)\to\R;\; \sup_{l\in\Z}\|\ww\|_{H^s_L ([l-1,l+1])} < \infty\right\},
\]
    where $\|\ww\|_{H^s_L ([l-1,l+1])}^2:= \sum_{|\alpha| \le s} \int^{l+1}_{l-1} \int^1_0 |\dx^{\alpha_1} L_r^{\alpha_2} \ww|^2 \; rdrdx$. We leave the details to the interested reader.
\end{enumerate}
\end{rem}

Regarding the mathematical justification of the formal derivation (i.e., problem (ii) above), we will show the
following
%
%   Theorem 2.4
%
\begin{thm}[Mathematical Justification of the Formal Derivation of the Axisymmetric Hydrostatic Euler Equations
\eqref{e:AHEE}]\label{thm:MathJustification} For any $\e > 0$, let $(u_\e^x, u_\e^r, p_\e)$ and $(u^x, u^r, p)$ be smooth solutions
to the axisymmetric rescaled Euler equations
\eqref{e:AREE} and the axisymmetric hydrostatic Euler equations \eqref{e:AHEE} respectively. Assume that $u^x$
satisfies the sign condition \eqref{e:sign} for some constant $\sigma > 0$. If there exist constants $C_0>0$ and
$\beta \in(0,4]$ such that
\[
\left. \int_\T \int^1_0 \left\{|u^x_\e - u^x|^2 + \e^2|u^r_\e-u^r|^2 + |\ww_\e-\ww|^2\right\}\;rdrdx\right|_{t=0} \le C_0 \e^\beta,
\]
where $\ww_\e := \frac{1}{r} \dr u^x_\e - \frac{\e^2}{r} \dx u^r_\e$ and $\ww := \frac{1}{r} \dr u^x$, then
for all $t \in [0,T]$,
\[
\int_\T \int^1_0 \left\{|u^x_\e-u^x|^2+\e^2|u^r_\e-u^r|^2+|\ww_\e-\ww|^2\right\} \;rdrdx
\le \widetilde{C} \e^\beta,
\]
where the constant $\widetilde{C}$ depends only on $\sigma$, $u^x$, $u^r$, $\ww$, $C_0$ and $T$, but not on
$\e$ nor $(u^x_\e, u^r_\e, p_\e)$.
\end{thm}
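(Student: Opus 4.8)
The plan is to run a modulated (relative) energy estimate, comparing the two solutions directly through the quantity appearing in the statement,
\[
\mathcal{E}(t):=\frac12\int_\T\int^1_0\bigl\{|v^x|^2+\e^2|v^r|^2+|\eta|^2\bigr\}\,rdrdx,\qquad v^x:=u^x_\e-u^x,\ \ v^r:=u^r_\e-u^r,\ \ \eta:=\w_\e-\w .
\]
First I would subtract the two systems. The horizontal momentum equations give
\[
\dt v^x+u^x_\e\dx v^x+u^r_\e\dr v^x+v^x\dx u^x+v^r\dr u^x=-\dx(p_\e-p);
\]
the radial equation of \eqref{e:AREE} together with $\dr p=0$ gives
\[
\e^2\bigl(\dt v^r+u^x_\e\dx v^r+u^r_\e\dr v^r\bigr)=-\dr(p_\e-p)-\e^2\bigl(\dt u^r+u^x_\e\dx u^r+u^r_\e\dr u^r\bigr),
\]
whose last term is the $O(\e^2)$ discrepancy between the rescaled and the hydrostatic systems; and --- crucially --- since $\w_\e$ and $\w$ both solve genuine transport equations $\dt\w_\e+u^x_\e\dx\w_\e+u^r_\e\dr\w_\e=0$ and $\dt\w+u^x\dx\w+u^r\dr\w=0$ (this exact structure, which survives the rescaling \eqref{e:rescaling}, is the reason the vorticity difference is the right object to include), one obtains
\[
\dt\eta+u^x_\e\dx\eta+u^r_\e\dr\eta+v^x\dx\w+v^r\dr\w=0 .
\]

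Next I would form $\frac{d}{dt}\mathcal{E}(t)$ by pairing the first equation with $v^x$, the (already $\e^2$-weighted) radial equation with $v^r$, and the vorticity equation with $\eta$, each against $rdrdx$. The convective terms $u^x_\e\dx(\,\cdot\,)+u^r_\e\dr(\,\cdot\,)$ integrate to zero after integration by parts, using the incompressibility of $\vec{u}_\e$ in the form $\dx u^x_\e+\frac1r\dr(ru^r_\e)=0$ together with $u^r_\e|_{r=1}=0$ and the vanishing of $u^r_\e$ at $r=0$; the pressure terms cancel because $p_\e-p$ is paired against $r\dx v^x+\dr(rv^r)=0$. What remains are: (a) terms of the form $\int(\text{smooth})\,|v^x|^2\,r$, $\int(\text{smooth})\,v^x\eta\,r$, etc., each bounded by $C\mathcal{E}$ with $C$ depending only on $\sigma$ and on high enough norms of the fixed solution $(u^x,u^r,\w)$ --- and, importantly, not on $\e$ nor on $(u^x_\e,u^r_\e,p_\e)$; (b) the $O(\e^2)$ error term $-\e^2\int(\dt u^r+u^x_\e\dx u^r+u^r_\e\dr u^r)\,v^r\,r$; and (c) the \emph{dangerous} terms
\[
-\int_\T\int^1_0 v^r v^x\,\dr u^x\,rdrdx\qquad\text{and}\qquad -\int_\T\int^1_0 v^r\,\dr\w\,\eta\,rdrdx ,
\]
in which $v^r=-\frac1r\dr^{-1}(r\dx v^x)$ carries the lost $x$-derivative and so cannot be absorbed into $\mathcal{E}$ without an inverse power of $\e$.

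The heart of the matter is to dispose of (c), and this is exactly where the sign condition \eqref{e:sign} enters, as the analogue of the local Rayleigh condition of \cite{Bre03,MW12}. I would introduce the velocity potential of the difference, $\Phi:=\int^r_0 r'\,v^x\,dr'$, so that $v^x=\frac1r\dr\Phi$, $rv^r=-\dx\Phi$, and $\dx\Phi|_{r=1}=0$ (because $v^r|_{r=1}=0$). Substituting into (c) and integrating by parts in $x$ and $r$, using $\dr u^x=r\w$, $\dr\w=rL_r\w$ and the identity $L_r v^x=\eta+\frac{\e^2}{r}\dx u^r_\e$ coming from the definition of $\w_\e$, the derivative-losing contributions reorganize into total $x$-derivatives --- whose boundary contributions at $r=1$ vanish precisely because $\dx\Phi|_{r=1}=0$ --- plus remainders controlled by $\mathcal{E}$; to make this bookkeeping close one works with the sign-condition-adapted energy in which $|\eta|^2$ is replaced by $|\eta|^2/L_r\w$, which is equivalent to $|\eta|^2$ since $0<\sigma\le L_r\w\le C(\w)$, exactly as the weighted $H^s_L$ energy \eqref{e:defnWH} is used in connection with the nonlinear cancelation \eqref{e:NonlinearCancelation}. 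I expect this step --- tracking all the commutators and checking that every residual term is genuinely $\le C\mathcal{E}$ with $C$ independent of $\e$ and of $(u^x_\e,u^r_\e,p_\e)$ --- to be the main obstacle.

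For the error term (b) I would again exploit the potential: writing $\dt u^r+u^x_\e\dx u^r+u^r_\e\dr u^r=(\dt+u^x\dx+u^r\dr)u^r+v^x\dx u^r+v^r\dr u^r$, the two summands carrying $v^x$ or $v^r$ are directly $\le C\mathcal{E}$ thanks to the $\e^2$ prefactor, while in the remaining summand one moves the single $x$-derivative in $v^r$ onto the smooth reference and onto $\Phi$ and uses $\|\Phi\|_{L^2(rdrdx)}\le C\|v^x\|_{L^2(rdrdx)}$ to bound it by $C\e^2\sqrt{\mathcal{E}}$. Collecting everything gives $\frac{d}{dt}\mathcal{E}(t)\le C\mathcal{E}(t)+C\e^2\sqrt{\mathcal{E}(t)}$ with $C=C(\sigma,T,u^x,u^r,\w)$; by Young's inequality $\e^2\sqrt{\mathcal{E}}\le\frac12\e^4+\frac12\mathcal{E}$, so $\frac{d}{dt}\mathcal{E}\le C'\mathcal{E}+C'\e^4$, and Gronwall's lemma yields $\mathcal{E}(t)\le e^{C'T}\bigl(\mathcal{E}(0)+C'T\e^4\bigr)\le\widetilde{C}(\e^\beta+\e^4)\le\widetilde{C}\e^\beta$ on $[0,T]$, the last inequality using $\beta\le4$. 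This is also why the natural range of exponents is exactly $\beta\in(0,4]$.
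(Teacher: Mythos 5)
Your overall strategy (a modulated energy built from the quantity in the statement, the transport structure of $\ww_\e$ and $\ww$, the cancelation of the pressure and convection terms, a potential/integration-by-parts trick on the $O(\e^2)$ remainder to reach the full $\e^4$ rate, and Gr\"{o}nwall) is the same family of argument as the paper's, which follows Brenier's relative entropy method; your difference equations and your identification of the two derivative-losing terms are correct. However, the decisive step is exactly the one you leave as an ``expectation,'' and as proposed it does not close. Replacing $|\eta|^2$ by $|\eta|^2/L_r\ww$ neutralizes only the second dangerous term: with that weight, $-\int v^r\,\dr\ww\,\frac{\eta}{L_r\ww}\,rdrdx=-\int(\vv_\e-\vv)(\ww_\e-\ww)\,rdrdx$ does reduce, via $\ww_\e-\ww=L_r(\uu_\e-\uu)-\frac{\e^2}{r^2}\dx\vv_\e$ and incompressibility, to a total $x$-derivative plus $O(\e^2)$ terms. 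But the first dangerous term, $-\int v^r v^x\,\dr u^x\,rdrdx=-\int(\vv_\e-\vv)(\uu_\e-\uu)\,\ww\,rdrdx$, is untouched by this weight; integrating by parts in $r$ merely converts it (modulo harmless terms and $O(\e^2)$) into $\int\uu\,(\vv_\e-\vv)(\ww_\e-\ww)\,rdrdx$, whose \emph{variable} coefficient $\uu$ prevents it from reorganizing into a total $x$-derivative, and undoing the integration by parts brings you back to where you started. The cancelation mechanism works only when the coefficient of $\int(\vv_\e-\vv)(\ww_\e-\ww)$ is constant; this is precisely what the paper arranges in Lemma \ref{lem:ConvexFunction} by choosing a convex function $F$ with $\dww^2F(t,x,\ww)=\frac{\uu-\kappa}{L_r\ww}$, so that the two dangerous families combine into $\kappa\int(\vv_\e-\vv)(\ww_\e-\ww)$, which is then killed as in \eqref{e:NonlinearCancelationforZ}.

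There is a second, related obstruction: even with the correct weight $\frac{\uu-\kappa}{L_r\ww}$, you cannot implement it as a pointwise weight $w(t,x,r)$ in a quadratic energy $\frac12\int w|\eta|^2\,rdrdx$. Transporting $|\eta|^2$ by the $\e$-flow produces $\frac12\int\left(u^x_\e\dx w+u^r_\e\dr w\right)|\eta|^2\,rdrdx$, and after writing $u_\e=u+v$ the cubic contributions $\int\left(v^x\dx w+v^r\dr w\right)|\eta|^2$ cannot be bounded by $C\,\mathcal{E}$ with a constant independent of $(u^x_\e,u^r_\e,p_\e)$, since no uniform $L^\infty$ control on the $\e$-family is assumed. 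The paper avoids this by using the relative entropy $\int\{F(t,x,\ww_\e)-F(t,x,\ww)-\dww F(t,x,\ww)(\ww_\e-\ww)\}\,rdrdx$: because $F$ depends on $r$ only through the transported quantity, the radial transport by $\vv_\e L_r$ contributes nothing, and the residual terms $I_1,\dots,I_5$ in Lemma \ref{lem:TimeDerivatives} are controlled by $L_\e$ using only the smooth limit solution and the bounds on $\dt\dww^2F$, $\dx\dww F$, $\dx\dww^2F$. So the missing ingredient in your proposal is the construction and use of this convex function; the remaining parts of your sketch (the $O(\e^2)$ remainder estimate via a potential, Young's inequality, Gr\"{o}nwall, and the restriction $\beta\in(0,4]$) do match the paper's scheme.
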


The proof of Theorem \ref{thm:MathJustification} will be provided in Section \ref{s:MathematicalJustification}.

%
%   Remark 2.5
%
\begin{rem}
\hfil
\begin{enumerate}[(i)]
\item (Existence of the Axisymmetric Rescaled Euler Equations \eqref{e:AREE}) Since the axisymmetric rescaled Euler equations \eqref{e:AREE} is obtained directly from the axisymmetric Euler equations \eqref{e:AEE} by applying the rescaling \eqref{e:rescaling}. Therefore, the global-in-time existence to the axisymmetric rescaled Euler equations \eqref{e:AREE} follows directly from that to the axisymmetric Euler equations \eqref{e:AEE}. Indeed, the global-in-time existence of regular solutions (e.g., $\vec{U}_{\e}:=U^X_\e e_X + U^R_\e e_R \in C([0,\infty);H^s)\cap C^1([0,\infty);H^{s-1})$ for $s>\frac{5}{2}$) to the axisymmetric Euler equations \eqref{e:AEE} is classical; see \cite{UI68} and \cite{Maj86} for the case of $\R^3$, as well as \cite{SY94} for the case of bounded domains in $\R^3$ for instance.
\item ($L^2$ Mathematical Justification) Since the local-in-time solution for the axisymmetric hydrostatic Euler equations \eqref{e:AHEE} obtained in Theorem \ref{thm:Well-posed} fulfills the hypotheses stated in Theorem \ref{thm:MathJustification}, we justify mathematically rigorously in the $L^2$ sense that the solution obtained in Theorem \ref{thm:Well-posed} provides a reasonable approximation of the leading order behavior of axisymmetric and incompressible fluids moving in a very thin tube.
\end{enumerate}
\end{rem}

Regarding the formation of singularity (i.e., problem (iii) above), we will prove the following
%
%   Theorem 2.6
%
\begin{thm}[Finite Time Blowup for Smooth Solutions to the Axisymmetric Hydrostatic Euler Equations
\eqref{e:AHEE}]\label{thm:FormationofSingularity}
Consider the axisymmetric hydrostatic Euler equations \eqref{e:AHEE} in the physical domain $\R\times (0,1):=\left\{ (x,r); \; x\in\R \text{ and } 0<r<1 \right\}$ instead of $\T\times (0,1)$. Let the initial data $u^x_0:=u^x_0 (x,r)$ satisfy the following property: there exist a fixed position $\hat{x}\in\R$ and a constant horizontal velocity $\widehat{u^x}\in\R$ such that
\begin{equation}\label{e:BlowupHypothesis}
\left\{
\begin{aligned}
u^x_0 (\hat{x},r) &\equiv \widehat{u^x} \quad\text{for all }r\in [0,1]\\
\text{either}\quad\dx L_r u^x_0 (\hat{x},0) &= 0 \quad\text{or}\quad\dx L_r u^x_0 (\hat{x},1) = 0\\
\dx L_r^2 u^x_0(\hat{x},r) &< 0 \quad\text{for all }r\in (0,1),
\end{aligned}\right.
\end{equation}
where $L_r:=\frac{1}{r}\dr$. Then there exist a finite time $T>0$ such that if a solution $(\vec{u},p):=(u^x e_x + u^r e_r,p)$ to the axisymmetric hydrostatic Euler equations \eqref{e:AHEE} remains smooth in the time interval $[0,T)$, then
\begin{equation}\label{e:FiniteTimeBlowupu^xdxu^xdxp}
\lim_{t\to T^-} \max\left\{ \|u^x(t)\|_{L^\infty}, \|\dx u^x(t)\|_{L^\infty}, \|\dx p(t)\|_{L^\infty} \right\}=\infty.
\end{equation}
\end{thm}

The proof of Theorem \ref{thm:FormationofSingularity} will be provided in Section \ref{s:FormationofSingularity}.

%
%   Remark 2.7
%
\begin{rem}
\hfil
\begin{enumerate}[(i)]
\item (Periodic Domain vs. Infinite Tube) In Theorems \ref{thm:Well-posed} and \ref{thm:MathJustification}, we consider the axisymmetric hydrostatic Euler equations \eqref{e:AHEE} in the periodic physical domain $\T\times (0,1)$ for simplicity. However, in Theorem \ref{thm:FormationofSingularity} we consider the fluids in the infinite tube $\R\times (0,1)$ instead. It is worth noting that the result stated in Theorem \ref{thm:FormationofSingularity} also applies to the fluids in the periodic domain $\T\times (0,1)$ because one may consider the solutions in the periodic domain $\T\times (0,1)$ as periodic flows in the infinite tube $\R\times (0,1)$.
\item (Blowup) The finite time blowup \eqref{e:FiniteTimeBlowupu^xdxu^xdxp} is in $\|u^x\|_{L^\infty}$, $\|\dx u^x\|_{L^\infty}$, or $\|\dx p\|_{L^\infty}$. The first and last cases correspond to the infinite horizontal velocity and pressure gradient respectively; they are non-physical in a certain sense. The second case corresponds to the formation of singularity.
\end{enumerate}
\end{rem}

%==============================================================================
%
%   Section 3  A Priori Estimates
%
%==============================================================================
\section{A Priori Estimates}\label{s:APrioriEstimates}
The aim of this section is to derive the a priori energy estimates for proving the well-posedness of the axisymmetric hydrostatic Euler equations \eqref{e:AHEE}. We will derive these estimates in following three steps:
\begin{enumerate}[(i)]
\item in order to simplify our computations, the axisymmetric hydrostatic Euler equations \eqref{e:AHEE} will be rewritten as the vorticity formulation in terms of the new differential operator and dependent variables \eqref{e:NewVar&Operator} in Subsection \ref{ss:VorticityFormulationinNewVariables};
\item the weighted $H^s_L$ energy estimates will be derived in Subsection \ref{ss:WeightedEnergyEstimates}; and
\item an $L^2$ comparison principle as well as its applications to uniqueness and stability will be provided in Subsection \ref{ss:L^2ComparisonPrinciple}.
\end{enumerate}

%------------------------------------------------------------------------------
%
%   Subsection 3.1  Vorticity Formulation in New Variables
%
%------------------------------------------------------------------------------
\subsection{Vorticity Formulation in New Variables}\label{ss:VorticityFormulationinNewVariables}
In this subsection we will first rewrite the axisymmetric hydrostatic Euler equations \eqref{e:AHEE} in the vorticity formulation so that we can avoid handling the scalar pressure $p$, which is a non-local quantity. After that, applying new dependent variables $\uu$, $\vv$, $\ww$ and differential operator $L_r$, we will further rewrite the vorticity formulation into a better form.

Let us begin by introducing the vorticity formulation for the axisymmetric hydrostatic Euler equations \eqref{e:AHEE} as follows.

In order to derive estimates for system \eqref{e:AHEE}, one may encounter the technical difficulty that the scalar pressure $p:=p(t,x)$ is a non-local quantity, which and the horizontal velocity component $u^x$ are related by an integral expression:
\[
\dx p = - 2 \;\dx \int^1_0 \left|u^x\right|^2 \; rdr.
\]
Inspired by the usual Euler equations, we can avoid handling the scalar pressure $p$ by considering the vorticity formulation. For the axisymmetric hydrostatic Euler equations \eqref{e:AHEE}, the vorticity\footnote{See Remark \ref{rem:vorticityforAHEE} for the explanation.} $\w$ is defined by $\w:=\dr u^x$. Differentiating the momentum equation $\eqref{e:AHEE}_1$ with respect to $r$ and using the incompressibility condition $\eqref{e:AHEE}_2$, we obtain the vorticity equation
\[
\dt\w + u^x \dx\w + u^r\dr\w = \frac{u^r}{r}\w.
\]
Or equivalently,
\begin{equation*}
\left(\dt + u^x \dx + u^r\dr\right)\left(\frac{\w}{r}\right) = 0.
\end{equation*}

Therefore, instead of studying equations \eqref{e:AHEE} directly, we consider the following vorticity system: for $(t,x,r)\in (0,T)\times \T \times (0,1)$,
\begin{equation}\label{e:VFAHEE}
\left\{\begin{aligned}
\dt \w+u^x \dx \w + u^r \dr \w & = \frac{u^r}{r} \w \\
u^x & = - \frac{1}{r} \dr \mathcal{A} \left(\frac{\w}{r}\right) \\
u^r & = \frac{1}{r} \dx \mathcal{A} \left(\frac{\w}{r}\right) \\
\w|_{t=0} & = \w_0 := \dr u^x_0,
\end{aligned}\right.
\end{equation}
where the Dirichlet solver $\mathcal{A}$ is defined by
\begin{equation}\label{e:DefnofA}
\mathcal{A}(f)(t,x,r) := -\frac{1}{4} \int^1_0 \{|r^2 - \rho^2| -r^2 - \rho^2  + 2r^2 \rho^2\} f(t,x,\rho) \; \rho
d\rho,
\end{equation}
which is the unique solution to
\[\left\{\begin{aligned}
- \frac{1}{r} \dr \left(\frac{1}{r} \dr \mathcal{A} (f) \right) & = f& \text{for }(x,r)\in\T\times (0,1)\\
\mathcal{A} (f) |_{r=0,1} & = 0.
\end{aligned}\right.\]
\begin{rem}[Biot-Savart Formulae]
The equations $\eqref{e:VFAHEE}_2$-$\eqref{e:VFAHEE}_3$ can be considered as the Biot-Savart formulae for the
axisymmetric hydrostatic Euler equations \eqref{e:AHEE}.
\end{rem}

According to the equivalence Lemma \ref{lem:Equivalence}, the axisymmetric hydrostatic Euler equations
\eqref{e:AHEE} and the vorticity system \eqref{e:VFAHEE} are equivalent provided that the pole condition
\begin{equation}\label{e:PoleConditnfor u^r}
\lim_{r\to 0^+} r u^r(t,x,r) = 0,
\end{equation}
and the compatibility condition
\begin{equation}\label{e:CompatibilityCondition}
\int^1_0 u^x \; rdr \equiv 0
\end{equation}
hold. Indeed, the pole condition \eqref{e:PoleConditnfor u^r} is just a minor and technical assumption because we are looking for regular solutions. Furthermore, without loss of generality, we can always assume the compatibility condition \eqref{e:CompatibilityCondition}
because of the following
\begin{rem}[Compatibility Condition \eqref{e:CompatibilityCondition}]\label{rem:CompatibilityCondition}
Using the incompressibility condition $\eqref{e:AHEE}_2$, the boundary condition $\eqref{e:AHEE}_3$ and the pole condition \eqref{e:PoleConditnfor u^r}, we know that at the initial time $t=0$, the integral $\int^1_0 u^x_0\;rdr$ is independent of $x$, and hence, must be a constant. Indeed, this constant is the $\lambda$ stated in Theorem \ref{thm:Well-posed}. Therefore, we can always fulfill the compatibility condition \eqref{e:CompatibilityCondition} at the initial time $t=0$ by applying a suitable Galilean transformation. Due to Lemma \ref{lem:ConservationofAveu^x}, the compatibility condition \eqref{e:CompatibilityCondition} also holds for all later times $t>0$ since it does initially.
\end{rem}

From now on until the end of Section \ref{s:Existence}, we will always assume \eqref{e:PoleConditnfor u^r} and \eqref{e:CompatibilityCondition}, and
consider the vorticity system \eqref{e:VFAHEE} instead of the axisymmetric hydrostatic Euler equations \eqref{e:AHEE}.
Now, in order to simplify our computations, we are going to further rewrite the vorticity system \eqref{e:VFAHEE} in terms
of \emph{new} differential operator and dependent variables as follows.

Using
\begin{equation}\label{e:NewVar&Operator}
\begin{aligned}
L_r &:=\frac{1}{r}\dr,& \uu &:=u^x,& \vv &:= r u^r,\text{ and}& \ww &:= \frac{\w}{r}=L_r \uu,
\end{aligned}
\end{equation}
we can rewrite the vorticity system \eqref{e:VFAHEE} as follows: for $(t,x,r) \in (0,T) \times \T \times (0,1)$,
\begin{equation}\label{e:VFAHEENV}
\left\{\begin{aligned}
\dt \ww + \uu \dx \ww + \vv L_r \ww & = 0 \\
\uu & = - L_r \mathcal{A} (\ww) \\
\vv & = \dx \mathcal{A} (\ww) \\
\ww|_{t=0} & = \ww_0 := \frac{\w_0}{r}
\end{aligned}\right.
\end{equation}
where the Dirichlet solver $\mathcal{A}$ is defined by \eqref{e:DefnofA}. In other words, $\mathcal{A}(\ww)$ is
the unique solution to
\[
\left\{\begin{aligned}
-L_r^2 \mathcal{A}(\ww) & = \ww && \text{ for } (x,r) \in \T \times (0,1)\\
\mathcal{A} (\ww) |_{r=0,1} & = 0.
\end{aligned}\right.
\]
Indeed, there are at least three advantages for considering the new vorticity system \eqref{e:VFAHEENV} instead of the original vorticity system
\eqref{e:VFAHEE}:
\begin{enumerate}[(i)]
\item in terms of $L_r$, $\uu$ and $\vv$, the incompressibility condition $\eqref{e:AHEE}_2$ becomes a simpler
form:
\begin{equation}\label{e:IncompressibilityConditnindx&Lr}
\dx \uu + L_r \vv = 0;
\end{equation}
\item the system \eqref{e:VFAHEENV} is ``almost'' the same as the vorticity system for the two-dimensional hydrostatic Euler equations if
we replace $L_r$ by $\da$, see Subsection \ref{ss:ExistenceviaReduction} for the details;
\item the unknowns $\uu$, $\vv$ and $\ww$ are better quantities in the following sense: if $\vec{u} := u^x e_x
+u^r e_r$ is a smooth vector field in three spatial dimensions, then the horizontal velocity component $u^x$
is a smooth function, but the quantities $u^r$ and $\w := \dr u^x$ may have singularities at the symmetric axis
$r=0$. However, one may check that all three quantities $\uu$, $\vv$ and $\ww$ are smooth even at the symmetric axis
$r=0$. As a result, we can estimate these new quantities
relatively easier.
\end{enumerate}
Due to the above three advantages, we will study the vorticity system \eqref{e:VFAHEENV} instead of the original vorticity system
\eqref{e:VFAHEE} in Subsections \ref{ss:WeightedEnergyEstimates} and \ref{ss:L^2ComparisonPrinciple}. Let us end this subsection by providing the following three remarks:
\begin{rem}[Relation between $\uu$ and $\ww$]
It is also worth to mention that
\[\ww = L_r \uu\]
in terms of the new differential operator and dependent variables \eqref{e:NewVar&Operator}.
\end{rem}
\begin{rem}[Boundary Condition for $\vv$]
Using the Biot-Savart formula $\eqref{e:VFAHEENV}_3$ and the definition \eqref{e:DefnofA} of $\mathcal{A}$, one may check that
\[\vv|_{r=0,1} \equiv 0.\]
\end{rem}
\begin{rem}[Sign Condition and Nonlinear Cancelation]
Using the new differential operator and dependent variables \eqref{e:NewVar&Operator}, we can express the sign condition \eqref{e:sign} and the
nonlinear cancelation \eqref{e:NonlinearCancelation} as \eqref{e:SignNV} and \eqref{e:NonlinearCancelationNV}
in Subsection \ref{ss:WeightedEnergyEstimates} respectively.
\end{rem}

%------------------------------------------------------------------------------
%
%   Subsection 3.2  Weighted Energy Estimates
%
%------------------------------------------------------------------------------
\subsection{Weighted Energy Estimates}\label{ss:WeightedEnergyEstimates}
The aim of this subsection is to derive the a priori $H^s_L$ energy estimates for the quantity $\ww:=\frac{\w}{r}$. As a direct consequence, we will also obtain an $L^\infty$ control on $L_r \ww$ as well. More precisely, we will prove Proposition \ref{prop:APrioriEst} below.

The main idea of the $H^s_L$ energy estimates is to estimate the weighted energy \eqref{e:defnWH} below instead of the original $H^s_L$ norm \eqref{e:defnH}. Indeed, the weighted energy \eqref{e:defnWH}, which is well-defined under the sign condition \eqref{e:sign} (or equivalently, \eqref{e:SignNV} below), is well-chosen so that it can avoid the $x$-derivative loss, which is the structural difficulty that we mentioned in Subsection \ref{ss:MainResults}, by using the nonlinear cancelation \eqref{e:NonlinearCancelation} (or equivalently, \eqref{e:NonlinearCancelationNV} below), so the standard energy method works.

These a priori estimates only reply on the fact that if $(\uu,\vv,\ww)$ is a smooth solution to the vorticity system \eqref{e:VFAHEENV}, then the quantities $\uu$, $\vv$ and $\ww$ will satisfy the following three mathematical structures:
\begin{enumerate}[(i)]
\item the first order system: for $(t,x,r)\in (0,T)\times \T \times (0,1)$,
\begin{equation}\label{e:VFAHEENV3.2}
\left\{\begin{aligned}
\dt \ww + \uu \dx \ww + \vv L_r \ww & = 0 \\
\dx\uu + L_r\vv & = 0\\
\vv |_{r=0,1} & = 0\\
\ww |_{t=0} & = \ww_0 := L_r \uu_0;
\end{aligned}\right.
\end{equation}
\item the energy estimates for $\uu$ and $\vv$: for any integer $s\ge 0$, there exists a constant $C_s>0$ such that
\begin{equation}\label{e:EnergyEstforuu&vv}
\left\{\begin{aligned}
\|D_L^\alpha\uu\|_{L^2(rdrdx)} & \le C_s \|\ww\|_{H^s_L}& & \text{for any } |\alpha|\le s,\\
\|D_L^\alpha\vv\|_{L^2(rdrdx)} & \le C_s \|\ww\|_{H^s_L}& & \text{for any } |\alpha|\le s \text{ with }D_L^\alpha\neq\dx^s,\\
\end{aligned}\right.
\end{equation}
where $D_L^\alpha:=\dx^{\alpha_1} L_r^{\alpha_2}$, $L_r := \frac{1}{r} \partial_r$ and $\|\cdot\|_{H^s_L}$ is defined in \eqref{e:defnH}; and
\item the nonlinear cancelation: for any $k=0$, $1$, $2$, $\cdots$,
\begin{equation}\label{e:NonlinearCancelationNV}
\int_\T \int^1_0 \dx^k \vv \dx^k \ww \; rdrdx = 0.
\end{equation}
\end{enumerate}

\begin{rem}[Mathematical Structures]\label{rem:MathematicalStructures}
The above three mathematical structures can be easily verified as follows: let $(\uu,\vv,\ww)$ satisfy \eqref{e:VFAHEENV}.
\begin{enumerate}[(i)]
\item Indeed, $\eqref{e:VFAHEENV}_1$ and $\eqref{e:VFAHEENV}_4$ are exactly the same as $\eqref{e:VFAHEENV3.2}_1$ and $\eqref{e:VFAHEENV3.2}_4$, so it suffices to verify the incompressibility condition $\eqref{e:VFAHEENV3.2}_2$ and the boundary condition $\eqref{e:VFAHEENV3.2}_3$. However, they are just direct consequences of the Biot-Savart formulae $\eqref{e:VFAHEENV}_2$-$\eqref{e:VFAHEENV}_3$ and the definition \eqref{e:DefnofA} of $\mathcal{A}$;
\item Since $\ww = L_r \uu$ and $\dx\uu +L_r \vv =0$, the energy estimates \eqref{e:EnergyEstforuu&vv} just follow from the definition \eqref{e:defnH} of $H^s_L$ norm and the Poincar\'{e} inequality \eqref{e:PoincareIneqforL&mu};
\item Using $\ww = L_r \uu$, $\dx\uu +L_r \vv =0$, the integration by parts formula \eqref{e:IntnbyPartsforL_r&rdr} and the boundary condition $\eqref{e:VFAHEENV3.2}_3$, we have
    \[\int_\T \int^1_0 \dx^k \vv \dx^k \ww \; rdrdx = \frac{1}{2} \int_\T \int^1_0 \dx \left( \left| \dx^k \uu \right|^2 \right) \; rdrdx = 0,\]
     since $\uu$ is periodic in $x$.
\end{enumerate}
\end{rem}

Under the sign condition
\begin{equation}\label{e:SignNV}
L_r \ww := L_r^2 \uu \ge \sigma >0
\end{equation}
for some constant $\sigma$, the mathematical structures (i)-(iii) are sufficient to derive our a priori estimates for $C^\infty_L$ solutions,
where
\[\begin{split}
& C^\infty_L([0,T]\times\T\times [0,1]):= \{\ww:[0,T] \times \T \times [0,1]
\to \R;\\
& \quad\quad \dt^i \dx^j L^k_r \ww \text{ is continuous for any non-negative integers $i$, $j$ and $k$}\}.\end{split}\]
 More precisely, we will prove the following
\begin{prop}[A Priori Estimates]\label{prop:APrioriEst}
For any integer $s\ge 4$ and constant $\sigma >0$, let $\ww\in C([0,T];H^s_{L,\sigma})$, and $\uu$, $\vv$, $\ww\in C^\infty_L([0,T]\times\T\times [0,1])$. Assume that $\uu$, $\vv$ and $\ww$ satisfy system \eqref{e:VFAHEENV3.2}, energy estimates \eqref{e:EnergyEstforuu&vv}, and nonlinear cancelation \eqref{e:NonlinearCancelationNV}. Then we have the following estimates:
\begin{enumerate}[(i)]
\item (Weighted Energy Estimate) there exists a constant $C_{s,\sigma}>0$ such that
\begin{equation}\label{e:EnergyEstforww}
\|\ww(t)\|_{\tilde{H}^s_L}\le \|\ww_0\|_{\tilde{H}^s_L} + C_{s,\sigma} \int^t_0  \left\{1+\|\ww(\tau)\|_{\tilde{H}^s_L}\right\} \|\ww(\tau)\|^2_{\tilde{H}^s_L} \; d\tau,
\end{equation}
where the weighted energy $\|\cdot\|_{\tilde{H}^s_L}$ is defined by
\begin{equation}\label{e:defnWH}
\|\ww\|_{\tilde{H}^s_L}^2 := \left\|\frac{\dx^s \ww}{\sqrt{L_r \ww}}\right\|^2_{L^2 (rdrdx)}
+ \sum_{\substack{|\alpha|\le s\\\alpha_1 \ne s}} \|D^\alpha_L \ww\|^2_{L^2 (rdrdx)},
\end{equation}
and $D^\alpha_L := \dx^{\alpha_1} L_r^{\alpha_2}$;
\item ($L^\infty$ Estimate) there exist a time $\tilde{T}:= \tilde{T}(\sigma, \|\ww_0\|_{H^4_L})\in (0,T]$ and a constant $C_\sigma>0$ such
that for any $t\in [0,\tilde{T}]$, we have
\begin{equation}\label{e:LInftyEstforww}
\min_{\T\times [0,1]} L_r \ww_0 - C_\sigma \|\ww_0\|_{H^4_L}^2 t  \le L_r \ww (\tau,x,r)
 \le \max_{\T\times [0,1]} L_r \ww_0 + C_\sigma \|\ww_0\|_{H^4_L}^2 t,
\end{equation}
for all $(\tau,x,r)\in[0,t]\times\T\times [0,1]$.
\end{enumerate}
\end{prop}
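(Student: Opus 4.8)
The plan is to carry out a standard energy estimate for the quantity $\ww$, differentiating the transport equation $\eqref{e:VFAHEENV3.2}_1$ by $D^\alpha_L := \dx^{\alpha_1} L_r^{\alpha_2}$ for all multi-indices $|\alpha| \le s$, and pairing against the natural $L^2(rdrdx)$ weight — with the crucial twist that for the top-order purely horizontal derivative $\dx^s$ one pairs against $\dx^s \ww / (L_r \ww)$ rather than $\dx^s \ww$ itself, which is exactly why the weighted energy $\|\cdot\|_{\tilde H^s_L}$ in \eqref{e:defnWH} appears. First I would establish that $\|\cdot\|_{\tilde H^s_L}$ and $\|\cdot\|_{H^s_L}$ are equivalent norms under the sign condition \eqref{e:SignNV} together with the (cheap) $L^\infty$ upper bound $L_r\ww \le 1/\sigma$; this uses the Sobolev inequality alluded to in the footnote after \eqref{e:DefnofH^s_L}. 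Then, applying $D^\alpha_L$ to $\eqref{e:VFAHEENV3.2}_1$, one gets $\dt D^\alpha_L\ww + \uu\dx D^\alpha_L\ww + \vv L_r D^\alpha_L\ww = -[D^\alpha_L, \uu\dx + \vv L_r]\ww$; the transport part contributes nothing after pairing and integrating by parts (using $\dx\uu + L_r\vv = 0$ from \eqref{e:IncompressibilityConditnindx&Lr} and the boundary condition $\vv|_{r=0,1}=0$), and the commutator is estimated by Moser-type product estimates in the $H^s_L$ calculus.

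The heart of the matter — and the step I expect to be the main obstacle — is the top-order term $D^\alpha_L = \dx^s$. There the commutator $[\dx^s, \uu\dx + \vv L_r]\ww$ contains the dangerous term $s(\dx^s\vv)(L_r\ww)$, and $\dx^s\vv$ cannot be controlled by $\|\ww\|_{H^s_L}$ (this is precisely the $x$-derivative loss flagged as the Structural Difficulty: $\vv = \dx\mathcal{A}(\ww)$ gains a derivative in $r$ but not in $x$, so $\dx^s\vv$ sees $\dx^s\ww$ plus one more $\dx$). The resolution is the nonlinear cancelation \eqref{e:NonlinearCancelationNV}: after pairing $\dt\dx^s\ww + \uu\dx\dx^s\ww + \vv L_r\dx^s\ww = -\text{(commutator)}$ against $\dx^s\ww/(L_r\ww)$, the time derivative produces $\frac{d}{dt}\frac12\int |\dx^s\ww|^2/(L_r\ww)\,rdrdx$ plus a term $+\frac12\int |\dx^s\ww|^2 \dt(L_r\ww)/(L_r\ww)^2 \, rdrdx$; one substitutes $\dt L_r\ww$ using the equation (differentiate $\eqref{e:VFAHEENV3.2}_1$ once by $L_r$), and the problematic piece $-\dx^s\vv \cdot (L_r\ww)/(L_r\ww) = -\dx^s\vv$ paired appropriately recombines, via $\int \dx^s\vv\,\dx^s\ww\,rdrdx = \tfrac12\int\dx(|\dx^s\uu|^2)\,rdrdx = 0$ and $\ww = L_r\uu$, with the bad term coming from the time-derivative-of-the-weight, so that the net coefficient of the top-order loss vanishes. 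What remains after this cancelation is genuinely of order $s$ and below, controllable by Moser estimates, Poincaré-type inequalities for $L_r$, and the energy bounds \eqref{e:EnergyEstforuu&vv}. Collecting all terms yields $\frac{d}{dt}\|\ww\|_{\tilde H^s_L}^2 \le C_{s,\sigma}(1 + \|\ww\|_{\tilde H^s_L})\|\ww\|_{\tilde H^s_L}^2$, and integrating in time gives \eqref{e:EnergyEstforww}.

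For part (ii), the $L^\infty$ bound \eqref{e:LInftyEstforww}, I would derive a transport equation for $L_r\ww = L_r^2\uu$ by applying $L_r$ to $\eqref{e:VFAHEENV3.2}_1$: one obtains $(\dt + \uu\dx + \vv L_r)(L_r\ww) = -(L_r\uu)\dx\ww - (L_r\vv)L_r\ww = -\ww\,\dx\ww + (\dx\uu)(L_r\ww)$, where I used $L_r\uu = \ww$ and $L_r\vv = -\dx\uu$; this is a transport equation along the (divergence-free in the $rdrdx$ sense) flow of $(\uu,\vv)$ with a right-hand side bounded in $L^\infty$ by $C\|\ww\|_{H^4_L}^2$ thanks to the Sobolev embedding $H^4_L \hookrightarrow W^{1,\infty}$ and the energy estimates \eqref{e:EnergyEstforuu&vv} (here $s\ge 4$ is exactly what is needed). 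Since $\vv|_{r=0,1}=0$, the characteristics through points with $r\in\{0,1\}$ stay on the boundary and those in the interior stay in the interior, so the maximum principle for this transport equation gives, for a short time $\tilde T$ depending on $\sigma$ and $\|\ww_0\|_{H^4_L}$ (chosen so that $\|\ww(t)\|_{H^4_L}$ stays comparable to $\|\ww_0\|_{H^4_L}$ via \eqref{e:EnergyEstforww} and the norm equivalence), the claimed two-sided linear-in-$t$ bound. The mild circularity — \eqref{e:EnergyEstforww} uses $L_r\ww\ge\sigma$ while \eqref{e:LInftyEstforww} uses the energy bound — is handled by a routine continuity/bootstrap argument on the interval $[0,\tilde T]$.
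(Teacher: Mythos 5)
Your proposal follows essentially the same route as the paper's proof: the non-top-order derivatives are handled by the standard energy method, the top-order term is paired against $\dx^s\ww/(L_r\ww)$ so that the dangerous contribution $(\dx^s\vv)(L_r\ww)$ reduces to $\iint \dx^s\vv\,\dx^s\ww\;rdrdx=0$ by the nonlinear cancelation \eqref{e:NonlinearCancelationNV}, the material derivative of the weight is controlled through the evolution equation for $L_r\ww$, and part (ii) follows from the uniform $H^4_L$ bound together with an $L^\infty$ bound on the equation for $L_r\ww$ (the paper simply bounds $\dt L_r\ww$ pointwise and integrates in time, which is equivalent to your characteristics argument). Two cosmetic slips do not affect the argument: the $\dx^s\vv$ term vanishes on its own rather than by recombining with the weight's time derivative (the latter only produces the lower-order factor $(\ww\dx\ww-\dx\uu\,L_r\ww)/|L_r\ww|^2$), and your final differential inequality should read $\tfrac{d}{dt}\|\ww\|^2_{\tilde{H}^s_L}\le C_{s,\sigma}\left(1+\|\ww\|_{\tilde{H}^s_L}\right)\|\ww\|^3_{\tilde{H}^s_L}$, which is what the trilinear commutator terms and the quartic weight term actually give and which integrates to \eqref{e:EnergyEstforww}.
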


\begin{rem}[Equivalent Energies] As long as $\ww \in H^s_{L,\sigma}$, the norm
\eqref{e:defnH} is equivalent to the weighted energy \eqref{e:defnWH} because of the $L^\infty$
boundedness of $L_r \ww$. Therefore, the weighted energy estimate \eqref{e:EnergyEstforww} provides a control
on $\|\ww\|_{H^s_L}$ as well.
\end{rem}

\begin{proof}[Proof of Proposition \ref{prop:APrioriEst}]
\hfil
\begin{enumerate}[(i)]
\item Inequality \eqref{e:EnergyEstforww} follows directly from the standard energy
method because one can apply the nonlinear cancelation \eqref{e:NonlinearCancelationNV} to avoid the $x$-derivative
loss, which is caused by $\vv = - \dx L_r^{-1} \uu$. The details are as follows.

Differentiating equation $\eqref{e:VFAHEENV3.2}_1$ with respect to $D^\alpha_L$, we have
\begin{equation}\label{e:DEnergyuu&vv}
\begin{split}
& \dt D^\alpha_L \ww + \uu \dx D^\alpha_L \ww + \vv L_r D^\alpha_L \ww \\
= & - \sum_{0<\beta \le \alpha} \binom{\alpha}{\beta} \left\{D^\beta_L \uu \dx D^{\alpha-\beta}_L \ww
+ D^\beta_L \vv L_r D^{\alpha-\beta}_L \ww \right\}.
\end{split}
\end{equation}

In the case that $|\alpha| \le s$ and $\alpha_1 \ne s$, since all $D^\beta_L \vv=- \dx^{\beta_1 + 1} L_r^{\beta_2 -1} \uu$ on the right hand side of
\eqref{e:DEnergyuu&vv} have no more than $s$ $x$-derivatives, we can apply the standard energy estimates, as
well as using the Sobolev inequality \eqref{e:sobolevinq} and energy estimates \eqref{e:EnergyEstforuu&vv}, to obtain
\begin{equation}\label{e:DtDww}
\dfrac{d}{dt} \|D^\alpha_L \ww\|^2_{L^2(r dr dx)} \le C_s \|\ww\|^3_{H^s_L}.
\end{equation}

In the case that $\alpha = (s,0)$, equation \eqref{e:DEnergyuu&vv} becomes
\begin{equation}\label{e:alphas0}
\begin{split}
& \dt\dx^s\ww + \uu \dx^{s+1} \ww + \vv \dx^s L_r \ww + \dx^s \vv L_r \ww \\
= & - \sum^s_{k=1} \binom{s}{k} \dx^k \uu \dx^{s-k+1} \ww - \sum^{s-1}_{k=1}
\binom{s}{k} \dx^k \vv \dx^{s-k} L_r \ww.
\end{split}
\end{equation}

Now, we are going to make use of the nonlinear cancelation \eqref{e:NonlinearCancelationNV} to
eliminate the problematic term $\dx^s \vv = - \dx^{s+1} L_r^{-1} \uu$, which has more than $s$ $x$-derivatives.
Multiplying \eqref{e:alphas0} by $\frac{\dx^s \ww}{L_r\ww}$, and then integrating over $\T \times (0,1)$ with
respect to $r dr dx$, we have, via using the evolution equation for $L_r \ww$,
\begin{equation}\label{e:evlnLrw}
\begin{split}
& \frac{1}{2} \dfrac{d}{dt} \iint \frac{|\dx^s \ww|^2}{L_r\ww} \;rdrdx + \iint  \dx^s \vv \dx^s \ww \;rdrdx\\
= & - \sum^s_{k=1} \binom{s}{k} \iint \frac{\dx^k \uu \dx^{s-k+1} \ww \dx^s \ww}{L_r \ww} \;rdrdx\\
& - \sum^{s-1}_{k=1} \binom{s}{k} \iint \frac{\dx^k \vv \dx^{s-k} L_r \ww \dx^s \ww}{L_r \ww} \;rdr dx\\
& + \frac{1}{2} \iint \frac{\ww\dx \ww - \dx \uu L_r \ww}{|L_r \ww|^2} |\dx^s \ww|^2 \;rdrdx.
\end{split}
\end{equation}
It follows from the nonlinear cancelation \eqref{e:NonlinearCancelationNV} that the second integral, which contains the problematic term $\dx^s \vv$, on the left hand side of \eqref{e:evlnLrw} is equal to 0. Furthermore, using the Sobolev inequality \eqref{e:sobolevinq},
energy estimates \eqref{e:EnergyEstforuu&vv} and the fact that $0 < \sigma \le L_r \ww \le \frac{1}{\sigma}$,
we know that the first two terms and the last term on the right hand side of \eqref{e:evlnLrw} are controlled
by $\frac{C_s}{\sigma} \|\ww\|^3_{H^s_L}$ and $\frac{C}{\sigma^2} \|\ww\|^2_{H^3_L} \|\dx^s \ww\|^2_{L^2(rdrdx)}$
respectively. Therefore, we have
\begin{equation}\label{e:dtdxsw}
\dfrac{d}{dt} \left\|\frac{\dx^s \ww}{\sqrt{L_r\ww}} \right\|^2_{L^2(rdrdx)} \le
C_{s, \sigma} \left( 1+ \|\ww\|_{H^s_L} \right) \|\ww\|^3_{H^s_L}.
\end{equation}
Using \eqref{e:DtDww} and \eqref{e:dtdxsw}, and summing up over $\alpha$, we finally obtain
\begin{equation}\label{e:dtww2}
\dfrac{d}{dt} \|\ww\|^2_{\tilde{H}^s_L} \le C_{s, \sigma} \left( 1+ \|\ww\|_{\tilde{H}^s_L} \right)
\|\ww\|^3_{\tilde{H}^s_L}
\end{equation}
since $\|\ww\|_{H^s_L} \le \frac{1}{\sqrt{\sigma}} \|\ww\|_{\tilde{H}^s_L}$. The weighted energy
estimate \eqref{e:EnergyEstforww} follows directly from \eqref{e:dtww2}.\\

\item The $L^\infty$ estimate \eqref{e:LInftyEstforww} is a direct consequence of the weighted energy estimate
\eqref{e:EnergyEstforww}. Indeed, inequality \eqref{e:EnergyEstforww} implies that there exist a time
$\tilde{T} := \tilde{T}(\sigma, \|\ww_0\|_{H^4_L}) \in (0,T]$ and a constant $C_\sigma>0$ such that
\begin{equation}\label{e:supww}
\sup_{[0,\tilde{T}]}\|\ww\|_{H^4_L} \le C_\sigma \|\ww_0\|_{H^4_L}.
\end{equation}
Hence, using the evolution equation for
$L_r \ww$, the Sobolev inequality \eqref{e:sobolevinq}, the energy estimates \eqref{e:EnergyEstforuu&vv} and the uniform bound \eqref{e:supww}, we have the following pointwise control: for any $0 \le t \le \tilde{T}$,
\begin{align*}
|\dt L_r \ww| & \le |\uu \dx L_r \ww| + |\vv L^2_r \ww| + |\ww \dx \ww| + |\dx\uu L_r \ww|\\
& \le C \left(\sup_{[0,\tilde{T}]} \|\ww\|_{H^4_L} \right)^2 \le C_\sigma \|\ww_0\|^2_{H^4_L},
\end{align*}
which implies the $L^\infty$ estimate \eqref{e:LInftyEstforww} by a direct integration.
\end{enumerate}
\end{proof}

%------------------------------------------------------------------------------
%
%   Subsection 3.3  L^2 Comparison Principle and Its Applications to Uniqueness and Stability
%
%------------------------------------------------------------------------------
\subsection{$L^2$ Comparison Principle and Its Applications to Uniqueness and Stability}\label{ss:L^2ComparisonPrinciple}
In this subsection we will first show a $L^2$ comparison principle for two solutions. Using this comparison principle, we will also obtain the uniqueness and stability to the axisymmetric hydrostatic Euler equations \eqref{e:AHEE}.

Let us begin with the $L^2$ comparison principle. The $L^2$ comparison principle is based on the following two properties for the difference of two solutions:
\begin{enumerate}[(i)]
\item Energy Estimate:
\begin{equation}\label{e:EnergyEstfor2Solns}
\|\uu_1-\uu_2\|_{L^2(r drdx)} \le C \|\ww_1-\ww_2\|_{L^2(rdrdx)};
\end{equation}
\item Nonlinear Cancelation:
\begin{equation}\label{e:NonlinearCancelationNVfor2Solns}
\int_\T \int^1_0 \left(\vv_1-\vv_2\right) \left(\ww_1-\ww_2\right) \; rdrdx = 0.
\end{equation}
\end{enumerate}
Using arguments similar to part (ii) and (iii) of Remark \ref{rem:MathematicalStructures}, one can verify that both \eqref{e:EnergyEstfor2Solns} and \eqref{e:NonlinearCancelationNVfor2Solns} hold for any two solutions $(\uu_1,\vv_1,\ww_1)$ and $(\uu_2,\vv_2,\ww_2)$ of the vorticity system \eqref{e:VFAHEENV}. Using energy estimate \eqref{e:EnergyEstfor2Solns} and nonlinear cancelation \eqref{e:NonlinearCancelationNVfor2Solns}, we will prove the following

\begin{prop}[$L^2$ Comparison Principle]\label{prop:L^2Comparison}
For $i=1$, $2$, let $\ww_i \in C([0,T];H^4_{L})\cap C^1([0,T];H^3_{L})$ such that $\uu_i$, $\vv_i$ and $\ww_i$ satisfy system \eqref{e:VFAHEENV3.2}, energy estimates \eqref{e:EnergyEstforuu&vv}, \eqref{e:EnergyEstfor2Solns}, and nonlinear cancelation \eqref{e:NonlinearCancelationNVfor2Solns}. If $\ww_2$ satisfies $0<\sigma \le L_r \ww_2 \le \frac{1}{\sigma}$, then we have the following $L^2$ comparison principle: there exists a constant $C_{\sigma,M, T}>0$ such that
\begin{equation}\label{e:L^2Comparison}
\|\ww_1-\ww_2\|_{C([0,T];L^2(rdrdx))} \le C_{\sigma,M,T} \|\left(\ww_1-\ww_2\right)|_{t=0}\|_{L^2(rdrdx)},
\end{equation}
where $M:=\max\left\{\sup_{[0,T]}\|\ww_1\|_{H^{4}_L}, \sup_{[0,T]}\|\ww_2\|_{H^{4}_L}\right\}$.
\end{prop}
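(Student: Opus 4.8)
The plan is a weighted $L^2$ estimate for the difference of the two solutions, in which the nonlinear cancelation \eqref{e:NonlinearCancelationNVfor2Solns} does the essential work of removing the loss of one $x$-derivative. Write $\overline{\ww} := \ww_1-\ww_2$, $\overline{\uu} := \uu_1-\uu_2$ and $\overline{\vv} := \vv_1-\vv_2$. Subtracting the two copies of $\eqref{e:VFAHEENV3.2}_1$ and regrouping gives
\[
\dt\overline{\ww} + \uu_1\dx\overline{\ww} + \vv_1 L_r\overline{\ww} = -\,\overline{\uu}\,\dx\ww_2 \;-\; \overline{\vv}\,L_r\ww_2, \qquad \overline{\ww}|_{t=0} = (\ww_1-\ww_2)|_{t=0}.
\]
The term $\overline{\vv}\,L_r\ww_2$ is the dangerous one, since $\overline{\vv} = \dx\mathcal{A}(\overline{\ww})$ carries one extra $x$-derivative relative to $\overline{\ww}$. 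I would test this equation against $\overline{\ww}/L_r\ww_2$ and integrate over $\T\times(0,1)$ with respect to $r\,dr\,dx$; this weight is legitimate because $\ww_2$ obeys $\sigma\le L_r\ww_2\le 1/\sigma$.

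Carrying out the integration, one encounters four contributions. (a) The time-derivative term produces $\tfrac12\frac{d}{dt}\int_\T\int_0^1\frac{|\overline{\ww}|^2}{L_r\ww_2}\,rdrdx$ together with an integral of $|\overline{\ww}|^2$ against $\dt(L_r\ww_2)/|L_r\ww_2|^2$; the latter is harmless once one notes that $\dt(L_r\ww_2)\in L^\infty$, with bound depending only on $M$, which follows from the evolution equation for $L_r\ww_2$ obtained by applying $L_r$ to $\eqref{e:VFAHEENV3.2}_1$ and then using the Sobolev inequality \eqref{e:sobolevinq} and the energy estimates \eqref{e:EnergyEstforuu&vv}. (b) The two transport terms $\uu_1\dx\overline{\ww}$ and $\vv_1 L_r\overline{\ww}$ are integrated by parts — in $x$ using periodicity, in $r$ using $\vv_1|_{r=0,1}=0$ together with the integration-by-parts formula \eqref{e:IntnbyPartsforL_r&rdr} — turning into integrals of $|\overline{\ww}|^2$ against $\dx(\uu_1/L_r\ww_2)$ and $L_r(\vv_1/L_r\ww_2)$, whose coefficients are bounded in $L^\infty$ by $\sigma$ and $M$ (via \eqref{e:sobolevinq}, \eqref{e:EnergyEstforuu&vv}, and the lower bound on $L_r\ww_2$). (c) The benign source $-\overline{\uu}\,\dx\ww_2$ gives a term bounded by $C_{\sigma,M}\|\overline{\uu}\|_{L^2(rdrdx)}\|\overline{\ww}\|_{L^2(rdrdx)}\le C_{\sigma,M}\|\overline{\ww}\|_{L^2(rdrdx)}^2$ thanks to \eqref{e:EnergyEstfor2Solns}. (d) The crux: the dangerous source $-\overline{\vv}\,L_r\ww_2$ pairs with the weight $\overline{\ww}/L_r\ww_2$ to give exactly $-\int_\T\int_0^1\overline{\vv}\,\overline{\ww}\,rdrdx$, which vanishes by \eqref{e:NonlinearCancelationNVfor2Solns}.

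Putting (a)--(d) together yields a differential inequality $\frac{d}{dt}\int_\T\int_0^1\frac{|\overline{\ww}|^2}{L_r\ww_2}\,rdrdx \le C_{\sigma,M,T}\int_\T\int_0^1\frac{|\overline{\ww}|^2}{L_r\ww_2}\,rdrdx$, after which Grönwall's inequality together with the two-sided equivalence $\sigma\,\|\overline{\ww}\|_{L^2(rdrdx)}^2 \le \int_\T\int_0^1\frac{|\overline{\ww}|^2}{L_r\ww_2}\,rdrdx \le \sigma^{-1}\,\|\overline{\ww}\|_{L^2(rdrdx)}^2$ produces \eqref{e:L^2Comparison}. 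I expect the only real obstacle to be the bookkeeping in steps (a) and (b): checking that, after the integrations by parts, each coefficient that appears ($\dt L_r\ww_2$, $\dx\uu_1$, $L_r^2\ww_2$, $\dx L_r\ww_2$, and the like) genuinely lies in $L^\infty$ with a bound depending on $\sigma$ and $M$ alone. This is exactly where the threshold $s=4$ matters — it is what lets $H^2_L\hookrightarrow L^\infty$ cover quantities carrying two derivatives — together with the strictly positive lower bound $L_r\ww_2\ge\sigma$ in the denominators. The conceptually hard part, the elimination of the $x$-derivative loss, is supplied to us directly as hypothesis \eqref{e:NonlinearCancelationNVfor2Solns}, so nothing needs to be fought there.
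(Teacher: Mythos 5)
Your proposal is correct and follows essentially the same route as the paper's proof: testing the difference equation against $\tilde{\ww}/L_r\ww_2$, killing the problematic $\tilde{\vv}\,L_r\ww_2$ term via the nonlinear cancelation \eqref{e:NonlinearCancelationNVfor2Solns}, controlling the benign source through \eqref{e:EnergyEstfor2Solns}, and closing with Gr\"{o}nwall and the equivalence $\sigma \le L_r\ww_2 \le \tfrac{1}{\sigma}$. The only difference is cosmetic bookkeeping: the paper groups the weight terms as the material derivative $(\dt+\uu_1\dx+\vv_1 L_r)\bigl(\tfrac{1}{L_r\ww_2}\bigr)$, whereas you keep $\dt(L_r\ww_2)$, $\dx(\uu_1/L_r\ww_2)$ and $L_r(\vv_1/L_r\ww_2)$ separately, which amounts to the same estimates.
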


\begin{proof}
Let $\tilde{\uu} := \uu_1 - \uu_2$, $\tilde{\vv} := \vv_1 - \vv_2$ and $\tilde{\ww} := \ww_1 - \ww_2$. Then $\tilde{\ww}$ satisfies
\begin{equation}\label{e:dttildeww}
\dt\tilde{\ww} + \uu_1 \dx \tilde{\ww} + \vv_1 L_r \tilde{\ww} + \tilde{\uu} \dx \ww_2 + \tilde{\vv} L_r \ww_2 = 0.
\end{equation}
Now, we are going to eliminate the problematic term $\tilde{\vv} = - \dx L^{-1}_r \tilde{\uu}$, which causes a loss of $x$-derivative,
by using the nonlinear cancelation \eqref{e:NonlinearCancelationNVfor2Solns}. Multiplying \eqref{e:dttildeww} by $\frac{\tilde{\ww}}{L_r \ww_2}$,
and then integrating over $\T \times (0,1)$ with respect to $r dr dx$, we have
\begin{equation}\label{e:eliminatev}
\begin{split}
& \frac{1}{2} \dfrac{d}{dt} \left\|\frac{\tilde{\ww}}{\sqrt{L_r \ww_2}} \right\|^2_{L^2 (r dr dx)} \\
= \; & \frac{1}{2} \iint \left\{(\dt + \uu_1 \dx + \vv_1 L_r) \left(\frac{1}{L_r \ww_2}\right)\right\} |\tilde{\ww}|^2 \;rdrdx\\
& - \iint \frac{\dx \ww_2}{L_r \ww_2} \tilde{\uu}\tilde{\ww} \;rdrdx - \iint \tilde{\vv}\tilde{\ww} \;rdrdx\\
\le \; & C_{\sigma, M} \left\| \frac{\tilde{\ww}}{\sqrt{L_r \ww_2}} \right\|^2_{L^2(r drdx)} + \|\tilde{u}\|^2_{L^2(r drdx)},
\end{split}
\end{equation}
where the last inequality is obtained by using the evolution equation for $L_r \ww_2$, the Sobolev inequality \eqref{e:sobolevinq}, energy estimates \eqref{e:EnergyEstforuu&vv} and the nonlinear cancelation \eqref{e:NonlinearCancelationNVfor2Solns}.

Using energy estimate \eqref{e:EnergyEstfor2Solns} and $0<\sigma \le L_r \ww_2 \le \frac{1}{\sigma}$, we have
\begin{equation}\label{e:tildeuu}
%\begin{split}
\|\tilde{\uu}\|^2_{L^2(r drdx)} \le C \|\tilde{\ww}\|^2_{L^2(rdrdx)} \le C_\sigma \left\| \frac{\tilde{\ww}}{\sqrt{L_r
\ww_2}} \right\|^2_{L^2 (r drdx)}.
\end{equation}

Combining \eqref{e:eliminatev} and \eqref{e:tildeuu}, we obtain
\begin{equation}\label{e:dttildewwover}
\dfrac{d}{dt} \left\| \frac{\tilde{\ww}}{\sqrt{L_r \ww}}\right\|_{L^2(r drdx)}
\le C_{\sigma, M} \left\| \frac{\tilde{\ww}}{\sqrt{L_r \ww_2}} \right\|_{L^2(r drdx)}.
\end{equation}
Applying the Gr\"{o}nwall's inequality to \eqref{e:dttildewwover}, we have
\[\left\|\frac{\tilde{\ww}}{\sqrt{L_r \ww_2}}\right\|_{L^2 (r drdx)}
\le C_{\sigma, M, T} \left\| \frac{\tilde{\ww}}{\sqrt{L_r \ww_2}} \Big|_{t=0} \right\|_{L^2(r drdx)},\]
which implies \eqref{e:L^2Comparison} because $0<\sigma \le L_r \ww_2 \le \frac{1}{\sigma}$.
\end{proof}

Using the $L^2$ comparison principle (i.e., Proposition \ref{prop:L^2Comparison}), we can immediately obtain the following corollary regarding the uniqueness and stability of the axisymmetric hydrostatic Euler equations \eqref{e:AHEE}.

\begin{cor}[Uniqueness and Stability of the Axisymmetric Hydrostatic Euler Equations \eqref{e:AHEE}]\label{cor:Uniqueness&StabilityAHEE}
For $i=1$, $2$, let $(\vec{u_i},p_i):= (u^x_i e_x + u^r_i e_r,p_i)$ be classical solutions to the axisymmetric hydrostatic Euler equations \eqref{e:AHEE} such that $\int^1_0 u^x_i \; r dr \equiv \lambda\equiv$ constant. For any integer $s\geq 4$ and constant $\sigma >0$, if $\ww_i := L_r u^x_i:= \frac{1}{r} \dr u^x_i \in C([0,T]; H^s_{L,\sigma}) \cap C^1 ([0,T]; H^{s-1}_L)$, then we have the following:
\begin{enumerate}[(i)]
\item (Stability) for any integer $s'\in [0,s)$, there exists a constant $C_{s,s',\sigma,T,M}>0$ such that the stability estimate \eqref{e:StabilityEsts} holds;
\item (Uniqueness) in addition, if both $(\vec{u_1},p_1)$ and $(\vec{u_2},p_2)$ satisfy the same initial data $u^x_0$, then $\vec{u_1}\equiv\vec{u_2}$ and $p_1\equiv p_2 + \tilde{p}$ for some function $\tilde{p}:=\tilde{p}(t)$.
\end{enumerate}
\end{cor}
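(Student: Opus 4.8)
The plan is to derive both assertions from the $L^2$ comparison principle (Proposition~\ref{prop:L^2Comparison}), so the first task is to check that its hypotheses hold for the two given solutions. Without loss of generality $\lambda=0$: since all quantities in the statement (the $\ww_i$, the difference $\ww_1-\ww_2$, and the estimate \eqref{e:StabilityEsts}) are unaffected by the Galilean transformation of Remark~\ref{rem:CompatibilityCondition}, and the conclusions in part (ii) transfer back through its inverse, I may assume the compatibility condition \eqref{e:CompatibilityCondition}. By the equivalence Lemma~\ref{lem:Equivalence}, the triples $(\uu_i,\vv_i,\ww_i):=(u^x_i,\,r u^r_i,\,L_r u^x_i)$ then solve the vorticity system \eqref{e:VFAHEENV}, hence satisfy \eqref{e:VFAHEENV3.2} together with the Biot--Savart representation $\uu_i=-L_r\mathcal{A}(\ww_i)$, $\vv_i=\dx\mathcal{A}(\ww_i)$. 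From the hypothesis $\ww_i\in C([0,T];H^s_{L,\sigma})\cap C^1([0,T];H^{s-1}_L)$ with $s\ge 4$ one reads off the regularity $\ww_i\in C([0,T];H^4_L)\cap C^1([0,T];H^3_L)$ and the two-sided bound $0<\sigma\le L_r\ww_i\le\frac1\sigma$ (in particular for $\ww_2$). The energy estimates \eqref{e:EnergyEstforuu&vv} and \eqref{e:EnergyEstfor2Solns} follow from $\ww_i=L_r\uu_i$, $\dx\uu_i+L_r\vv_i=0$ and the Poincar\'e-type bound for $L_r\mathcal{A}$ exactly as in part (ii) of Remark~\ref{rem:MathematicalStructures}, while the nonlinear cancelation \eqref{e:NonlinearCancelationNVfor2Solns} follows, as in part (iii) of that remark, by integrating $(\vv_1-\vv_2)(\ww_1-\ww_2)\,rdrdx$ by parts using $\ww_1-\ww_2=L_r(\uu_1-\uu_2)$, $\dx(\uu_1-\uu_2)+L_r(\vv_1-\vv_2)=0$, $(\vv_1-\vv_2)|_{r=0,1}=0$ and $x$-periodicity. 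Thus Proposition~\ref{prop:L^2Comparison} applies and gives
\[
\sup_{[0,T]}\|\ww_1-\ww_2\|_{L^2(rdrdx)}\le C_{\sigma,M,T}\,\|(\ww_1-\ww_2)|_{t=0}\|_{L^2(rdrdx)}.
\]

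Part (ii), uniqueness, is then immediate. If $u^x_1|_{t=0}=u^x_2|_{t=0}$, then $(\ww_1-\ww_2)|_{t=0}=0$, so the displayed estimate forces $\ww_1\equiv\ww_2$ on $[0,T]$. The Biot--Savart formulae give $\uu_1=-L_r\mathcal{A}(\ww_1)=-L_r\mathcal{A}(\ww_2)=\uu_2$ and $\vv_1=\dx\mathcal{A}(\ww_1)=\dx\mathcal{A}(\ww_2)=\vv_2$; since $r>0$ on $(0,1)$ this yields $u^x_1\equiv u^x_2$ and $u^r_1\equiv u^r_2$ on $\T\times(0,1)$, i.e. $\vec{u_1}\equiv\vec{u_2}$. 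Subtracting the momentum equations $\eqref{e:AHEE}_1$ for the two solutions and using $\vec{u_1}\equiv\vec{u_2}$ gives $\dx p_1=\dx p_2$, so $p_1-p_2$ is independent of $x$; as $p_i=p_i(t,x)$, this means $p_1\equiv p_2+\tilde p$ for some $\tilde p=\tilde p(t)$.

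For part (i), stability, I would interpolate the $L^2$ comparison bound against the trivial high-order bound $\sup_{[0,T]}\|\ww_1-\ww_2\|_{H^s_L}\le\sup_{[0,T]}\|\ww_1\|_{H^s_L}+\sup_{[0,T]}\|\ww_2\|_{H^s_L}\le 2M$. The one elementary input needed is the interpolation inequality
\[
\|\ww\|_{H^{s'}_L}\le\|\ww\|_{L^2(rdrdx)}^{1-s'/s}\,\|\ww\|_{H^s_L}^{s'/s},\qquad 0\le s'\le s,
\]
which holds because the substitution $a:=r^2/2\in(0,1/2)$ turns $L_r=\frac1r\dr$ into $\da$ and $rdrdx$ into $da\,dx$, so that $\|\cdot\|_{H^s_L}$ becomes the standard Sobolev norm on $\T\times(0,1/2)$, for which interpolation is classical (alternatively one cites the corresponding inequality among the basic properties in Appendix~\ref{app:BasicPropertiesforAHEE}). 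Applying this to $\ww_1(t)-\ww_2(t)$ and inserting the two bounds above yields, for every $t\in[0,T]$,
\[
\|\ww_1(t)-\ww_2(t)\|_{H^{s'}_L}\le\left(C_{\sigma,M,T}\,\|(\ww_1-\ww_2)|_{t=0}\|_{L^2(rdrdx)}\right)^{1-s'/s}(2M)^{s'/s},
\]
and taking the supremum over $t$ gives \eqref{e:StabilityEsts} with $C_{s,s',\sigma,T,M}:=C_{\sigma,M,T}^{1-s'/s}(2M)^{s'/s}$.

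I do not expect a genuine obstacle: this corollary is essentially bookkeeping on top of Proposition~\ref{prop:L^2Comparison}. The two points requiring (routine) care are: checking that classical solutions of \eqref{e:AHEE} in the stated regularity class really do satisfy the hypotheses of Proposition~\ref{prop:L^2Comparison}, which is where the equivalence Lemma~\ref{lem:Equivalence}, the pole and compatibility conditions \eqref{e:PoleConditnfor u^r}--\eqref{e:CompatibilityCondition}, and the Galilean normalization enter; and establishing the interpolation inequality for the weighted spaces $H^s_L$, which I would settle once and for all via the change of variables $a=r^2/2$.
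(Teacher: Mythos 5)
Your proposal is correct and follows essentially the same route as the paper: normalize $\lambda=0$ by a Galilean transformation, pass to the vorticity system via the equivalence Lemma~\ref{lem:Equivalence} (the paper notes one must first check the pole condition \eqref{e:PoleConditnfor u^r}, which you only gesture at but which is the same ``elementary argument''), apply the $L^2$ comparison principle of Proposition~\ref{prop:L^2Comparison}, and conclude stability by the interpolation inequality \eqref{e:InterpolationIneq} and uniqueness via the Biot--Savart formulae. Your only deviations are cosmetic: you recover $p_1\equiv p_2+\tilde p(t)$ by subtracting the momentum equations rather than citing Lemma~\ref{lem:Equivalence}, and you re-derive the interpolation inequality by the change of variables $a=r^2/2$, which is exactly the paper's appendix proof.
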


\begin{proof}
Without loss of generality, we can assume that $\lambda =0$, otherwise, we can always fulfill this by applying a suitable Galilean transformation. See Remark \ref{rem:CompatibilityCondition} for the details.
\begin{enumerate}[(i)]
\item Since both $(\vec{u_1},p_1)$ and $(\vec{u_2},p_2)$ are regular enough, one may apply boundary condition $\eqref{e:AHEE}_3$, compatibility condition \eqref{e:CompatibilityCondition}, and elementary argument to show that both solutions satisfy the pole condition \eqref{e:PoleConditnfor u^r}. By the equivalence Lemma \ref{lem:Equivalence}, both $(u^x_1,u^r_1,\w_1)$ and $(u^x_2,u^r_2,\w_2)$ satisfy the vorticity system \eqref{e:VFAHEE}, where $\w_i:=\dr u^x_i$ for $i=1$, $2$. Equivalently, using the change of variables \eqref{e:NewVar&Operator}, we know that $(\uu_i,\vv_i,\ww_i):=(u^x_i,r u^r_i,\frac{\w_i}{r})$ satisfies the vorticity system \eqref{e:VFAHEENV} for $i=1$, $2$. Therefore, we can apply the $L^2$ comparison principle (i.e., Proposition \ref{prop:L^2Comparison}) to obtain estimate \eqref{e:L^2Comparison}. The stability estimate \eqref{e:StabilityEsts} follows directly from estimate \eqref{e:L^2Comparison} and the interpolation inequality \eqref{e:InterpolationIneq}.

\item Since both $(\vec{u_1},p_1)$ and $(\vec{u_2},p_2)$ satisfy the same initial data $u^x_0$, one can check that $\ww_1 |_{t=0} \equiv \ww_2 |_{t=0}$, where $\ww_i:=\frac{1}{r}\dr u^x_i$ for $i=1$, $2$. Thus, applying the stability estimate \eqref{e:StabilityEsts}, we obtain $\ww_1 \equiv \ww_2$. Using the Biot-Savart formulae $\eqref{e:VFAHEENV}_2$-$\eqref{e:VFAHEENV}_3$, we also have $\uu_1\equiv\uu_2$ and $\vv_1\equiv\vv_2$. In terms of the original variables, we obtain $(u^x_1,u^r_1,\w_1)\equiv (u^x_2,u^r_2,\w_2)$, and hence, by the equivalence Lemma \ref{lem:Equivalence}, $p_1\equiv p_2 + \tilde{p}$ for some function $\tilde{p}=\tilde{p}(t)$.
\end{enumerate}
\end{proof}

%==============================================================================
%
%   Section 4  Existence
%
%==============================================================================
\section{Existence}\label{s:Existence}

The aim of this section is to provide two independent constructions of the solutions to the axisymmetric hydrostatic Euler equations \eqref{e:AHEE}. In Subsection \ref{ss:ExistenceviaApproximateSystem} we will first introduce an approximate scheme that keeps all a priori estimates derived in Section \ref{s:APrioriEstimates}, and then construct the solution as the limit of approximate systems. In Subsection \ref{ss:ExistenceviaReduction} we will construct the solution by a reduction argument.

%------------------------------------------------------------------------------
%
%   Subsection 4.1  Existence via Approximate Scheme
%
%------------------------------------------------------------------------------
\subsection{Existence via Approximate Scheme}\label{ss:ExistenceviaApproximateSystem}
In this subsection we will first introduce an approximate scheme. Using this approximate scheme, we will outline the proof of existence to the axisymmetric hydrostatic Euler equations \eqref{e:AHEE}.

Let us begin by introducing the approximate systems as follows: for any positive integer $N$, we consider, for $(t,x,r) \in (0,T) \times \T \times (0,1)$,
\begin{equation}\label{e:ApproxSysVFAHEENV}
\left\{\begin{aligned}
\dt \ww_N + \uu_N \dx \ww_N + \vv_N L_r \ww_N & = 0 \\
\uu_N & = - P_N L_r \mathcal{A} (\ww_N) \\
\vv_N & = P_N \dx \mathcal{A} (\ww_N) \\
\ww_N|_{t=0} & = \ww_0 := \frac{\w_0}{r}.
\end{aligned}\right.
\end{equation}
Here, the Dirichlet solver $\mathcal{A}$ is defined by \eqref{e:DefnofA}, and the projection operator $P_N$ is defined by
\begin{equation}\label{e:DefnofP_N}
\begin{aligned}
& P_N f(t,x,r) \\  := \; &  a_0 (t,r)  + \sum^N_{k=1} a_k (t,r) \left( \sqrt{2} \cos 2 k \pi x \right) + \sum^N_{k=1} b_k (t,r) \left( \sqrt{2} \sin 2 k \pi x \right),
\end{aligned}
\end{equation}
where the coefficients are given by
\[\left\{
\begin{aligned}
a_0 (t,r) &:= \int_{\mathbb{T}} f(t,x,r) \;dx \\
a_k (t,r) &:= \int_{\mathbb{T}} f(t,x,r) \left( \sqrt{2} \cos 2k\pi
x \right) \;dx && \mbox{ for all $k=1$, $2$, $\cdots$}\\
b_k (t,r) &:= \int_{\mathbb{T}} f(t,x,r) \left( \sqrt{2} \sin 2k\pi
x \right) \;dx && \mbox{ for all $k=1$, $2$, $\cdots$}.
\end{aligned}
\right.\]

The main advantage of the approximate system \eqref{e:ApproxSysVFAHEENV} is that for any fixed $N$, the system \eqref{e:ApproxSysVFAHEENV} does not have the loss of $x$-derivative (i.e., the structural difficulty mentioned in Subsection \ref{ss:MainResults}) because the projection operator $P_N$ regularizes $\vv_N$ in the $x$-direction. As a result, for any fixed $N$, one may construct the unique local-in-time solution to the approximate system \eqref{e:ApproxSysVFAHEENV} by classical methods. For example, one may first construct an iterative sequence of linearized solutions to \eqref{e:ApproxSysVFAHEENV}, and then prove the convergence of these linearized solutions by using the energy method. Since the details are standard, we leave this to the interested reader.

It is worth noting that if we only apply the standard energy method to solve the approximate system \eqref{e:ApproxSysVFAHEENV}, then the life-spans of the solutions may depend on $N$. This is due to the fact that the regularization effect of the projection $P_N$ becomes weaker and weaker for a larger and larger $N$. Therefore, in order to solve the approximate system \eqref{e:ApproxSysVFAHEENV} in a uniform (in $N$) life-span, one must derive a priori energy estimates without using the regularization effect of $P_N$. This can be done under the sign condition \ref{e:SignNV}. More precisely, one may check that the solution $(\uu_N,\vv_N,\ww_N)$ of \eqref{e:ApproxSysVFAHEENV} indeed satisfies the mathematical structures \eqref{e:VFAHEENV3.2}-\eqref{e:NonlinearCancelationNV}, and hence, we can also apply the Proposition \ref{prop:APrioriEst} to the solution $(\uu_N,\vv_N,\ww_N)$. As a result, weighted energy estimates \eqref{e:EnergyEstforww} and $L^\infty$ estimate \eqref{e:LInftyEstforww} also hold for $\ww_N$ as well. Since estimates \eqref{e:EnergyEstforww} and \eqref{e:LInftyEstforww} are uniform in $N$, one may apply the standard continuous induction argument to show that for any integer $s\ge 4$, $N=1$, $2$, $\cdots$ and $\sigma >0$, if $\ww_0\in H^s_{L,2 \sigma}$, there exist a uniform (in $N$) life-span $T>0$ and a solution $\ww_N\in C([0,T];H^s_{L,\sigma})\cap C^1([0,T];H^{s-1}_L)$ to the approximate system \eqref{e:ApproxSysVFAHEENV}. Furthermore, the solutions $\ww_N$ are uniformly bounded: there exists a constant $M>0$, depending on $s$, $\sigma$, $T$ and $\|\ww_0\|_{H^s_L}$ but not on $N$ nor $\ww_N$, such that
\begin{equation}\label{e:UniformEnergyEstforww_N}
\|\ww_N\|_{C([0,T];H^s_L)} \le M.
\end{equation}

Finally, in order to solve the vorticity system \eqref{e:VFAHEENV}, we have to prove the convergence of $\ww_N$ and the consistency of the limit. Regarding the convergence, one may follow a similar argument as in Subsection \ref{ss:L^2ComparisonPrinciple} to verify the $L^2$ convergence. However, for any two different approximate solutions, the nonlinear cancelation \eqref{e:NonlinearCancelationNVfor2Solns} does not hold in general. Indeed, we only have the almost nonlinear cancelation: for any integers $N_1>N_2$,
\begin{equation}\label{e:AlmostNonlinearCancelationNVfor2ApproxSolns}
\left|\int_\T \int^1_0 \left(\vv_{N_1}-\vv_{N_2}\right) \left(\ww_{N_1}-\ww_{N_2}\right) \; rdrdx\right| \le \frac{C_M}{N_2^2} \left\|\ww_{N_1}-\ww_{N_2}\right\|_{L^2(rdrdx)},
\end{equation}
where the constant $C_M$ depends on the constant $M$ mentioned in inequality \eqref{e:UniformEnergyEstforww_N}, but not on $N_1$ and $N_2$. As a result, if we follow the argument in the proof of Proposition \eqref{prop:L^2Comparison} and use the fact that $\left.\ww_{N_1}\right|_{t=0}\equiv\left.\ww_{N_2}\right|_{t=0}$, then we can show, for any integers $N_1>N_2$,
\begin{equation*}%\label{e:L^2ComparisonApproxSolns}
\|\ww_{N_1}-\ww_{N_2}\|_{C([0,T];L^2(rdrdx))} \le  \frac{C_{\sigma,M,T}}{N_2^2}.
\end{equation*}
This proves the $C\left([0,T];L^2\right)$ convergence. By the interpolation inequality \eqref{e:InterpolationIneq}, we obtain the $C\left([0,T];H^{s'}_L\right)$ convergence of the approximate solutions $\ww_N$ as $N$ goes to $\infty$ for any integer $s'\in [0,s)$. Since we have the strong convergence, the consistency is obvious. Lastly, using the standard regularizing initial data argument, one may also construct $\ww\in C\left([0,T];H^{s}_L\right)$. Since the estimate \eqref{e:UniformEnergyEstforww_N} also holds for $\ww$, we can verify by using the evolution equation for $L_r \ww$ that if $\ww_0\in H^{s}_{L,2\sigma}$, then $\ww\in C\left([0,T];H^{s}_{L,\sigma}\right)$ for some $T>0$. This and the equivalence Lemma \ref{lem:Equivalence} show the existence stated in Theorem \ref{thm:Well-posed}.

Let us end this subsection by verifying the almost nonlinear cancelation \eqref{e:AlmostNonlinearCancelationNVfor2ApproxSolns} as follows:
\begin{proof}[Proof of Inequality \eqref{e:AlmostNonlinearCancelationNVfor2ApproxSolns}]
First of all, we can rewrite
\begin{equation}\label{e:Integral=I1+I2}
\int_\T \int^1_0 \left(\vv_{N_1}-\vv_{N_2}\right) \left(\ww_{N_1}-\ww_{N_2}\right) \; rdrdx = I_1 + I_2,
\end{equation}
where
\[\left\{
\begin{aligned}
I_1 &= \int_\T \int^1_0 \left(\vv_{N_1}-P_{N_1}\dx\mathcal{A}(\ww_{N_2})\right) \left(\ww_{N_1}-\ww_{N_2}\right) \; rdrdx \\
I_2 &= \int_\T \int^1_0 \left(P_{N_1}\dx\mathcal{A}(\ww_{N_2})-\vv_{N_2}\right) \left(\ww_{N_1}-\ww_{N_2}\right) \; rdrdx.
\end{aligned}
\right.\]
Using $\vv_{N_1}=P_{N_1}\dx\mathcal{A}(\ww_{N_1})$ and $\ww_{N_1}-\ww_{N_2}=-L_r^2\mathcal{A}\left(\ww_{N_1}-\ww_{N_2}\right)$, we can apply the integration by parts formula \eqref{e:IntnbyPartsforL_r&rdr} to the integral $I_1$, and obtain
\begin{equation}\label{e:I1Est}
\begin{aligned}
I_1 &= - \int_\T \int^1_0 P_{N_1}\dx\mathcal{A}\left(\ww_{N_1}-\ww_{N_2}\right) L_r^2 \mathcal{A}\left(\ww_{N_1}-\ww_{N_2}\right) \; rdrdx \\
&= \int_\T \int^1_0 P_{N_1}\dx L_r\mathcal{A}\left(\ww_{N_1}-\ww_{N_2}\right) L_r\mathcal{A}\left(\ww_{N_1}-\ww_{N_2}\right) \; rdrdx \\
&= \frac{1}{2} \int_\T \int^1_0 \dx \left| P_{N_1} L_r\mathcal{A}\left(\ww_{N_1}-\ww_{N_2}\right) \right|^2 \; rdrdx \\
&= 0.
\end{aligned}
\end{equation}
On the other hand, using $N_1>N_2$ and $\vv_{N_2}=P_{N_2}\dx\mathcal{A}(\ww_{N_2})$, we can estimate $I_2$ as follows:
\begin{equation}\label{e:I2Est}
\begin{aligned}
I_2 &= \int_\T \int^1_0 \left\{\left(P_{N_1}-P_{N_2}\right)\dx\mathcal{A}(\ww_{N_2})\right\} \left(\ww_{N_1}-\ww_{N_2}\right) \; rdrdx \\
&\le \|\left(P_{N_1}-P_{N_2}\right)\dx\mathcal{A}(\ww_{N_2})\|_{L^2(rdrdx)} \|\ww_{N_1}-\ww_{N_2}\|_{L^2(rdrdx)} \\
&\le \frac{C}{N_2^2}\|\left(P_{N_1}-P_{N_2}\right)\dx^3\mathcal{A}(\ww_{N_2})\|_{L^2(rdrdx)} \|\ww_{N_1}-\ww_{N_2}\|_{L^2(rdrdx)} \\
&\le \frac{C}{N_2^2}\|\ww_{N_2}\|_{H^3_L} \|\ww_{N_1}-\ww_{N_2}\|_{L^2(rdrdx)} \\
&\le \frac{C_M}{N_2^2}\|\ww_{N_1}-\ww_{N_2}\|_{L^2(rdrdx)},
\end{aligned}
\end{equation}
where the constant $C_M$ depends on the constant $M$ mentioned in inequality \eqref{e:UniformEnergyEstforww_N}. Combining \eqref{e:Integral=I1+I2}-\eqref{e:I2Est}, we prove inequality \eqref{e:AlmostNonlinearCancelationNVfor2ApproxSolns}.
\end{proof}

%------------------------------------------------------------------------------
%
%   Subsection 4.2  Existence via Reduction
%
%------------------------------------------------------------------------------
\subsection{Existence via Reduction}\label{ss:ExistenceviaReduction}
The aim of this subsection is to introduce a reduction argument which reduces the vorticity system \eqref{e:VFAHEENV} to be the vorticity system for the two-dimensional hydrostatic Euler equations. As a result, we can obtain the solutions to the axisymmetric hydrostatic Euler equations \eqref{e:AHEE} by using that to the two-dimensional hydrostatic Euler equations, whose $H^s$ well-posedness were shown in \cite{MW12} previously.

The main idea of the reduction argument is to use the cross-sectional area $a$ instead of the radius $r$ as an independent variable. To be more precise, let us begin with the following change of variable: define
\begin{equation}\label{e:Defnofa}
a:=\frac{1}{2}r^2,
\end{equation}
which is equivalent (up to a factor of $2\pi$) to the area of a two-dimensional disk of radius $r$. It follows from \eqref{e:Defnofa} that
\begin{equation}\label{e:da&L_r&rdr}
\da = \frac{1}{r}\dr =: L_r \quad\text{and}\quad da = rdr.
\end{equation}
Thus, using the change of variable \eqref{e:Defnofa}, we can rewrite the vorticity system \eqref{e:VFAHEENV} as the following system:
for $(t,x,a) \in (0,T) \times \T \times (0,\frac{1}{2})$,
\begin{equation}\label{e:VFAHEEinx&a}
\left\{\begin{aligned}
\dt \ww + \uu \dx \ww + \vv \da \ww & = 0 \\
\uu & = - \da \mathcal{A} (\ww) \\
\vv & = \dx \mathcal{A} (\ww) \\
\ww|_{t=0} & = \ww_0,
\end{aligned}\right.
\end{equation}
where the Dirichlet solver $\mathcal{A}$ is given by
\begin{equation}\label{e:DefnofAina}
\mathcal{A}(f)(t,x,a) := -\frac{1}{2} \int^{\frac{1}{2}}_0 \{|a - \tilde{a}| -a - \tilde{a}  + 4 a \tilde{a}\} f(t,x,\tilde{a})
\;d\tilde{a}.
\end{equation}
In other words, $\mathcal{A}(f)$ defined in \eqref{e:DefnofAina} is the unique solution to
\[\left\{\begin{aligned}
-\da^2 \mathcal{A} (f) & = f &\text{in }\T\times(0,\frac{1}{2})\\
\mathcal{A} (f) |_{a=0,\frac{1}{2}} & = 0.
\end{aligned}\right.\]

Rescaling the interval $(0,\frac{1}{2})$ to be $(0,1)$ and viewing the variable $a$ as the length instead of area, one can recognize that the vorticity system \eqref{e:VFAHEEinx&a} is exactly the same as the vorticity system\footnote{See system (2.1) in \cite{MW12} for instance.} for the two-dimensional hydrostatic Euler equations. Furthermore, by \eqref{e:da&L_r&rdr}, the sign condition \eqref{e:SignNV} can be written as
\[
\da \ww = \da^2 \uu \geq \sigma >0,
\]
which can also be seen as the local Rayleigh condition\footnote{See inequality (2.2) in \cite{MW12} for instance.} for the two-dimensional hydrostatic Euler equations.

As a result, it follows from Theorem 2.5 in \cite{MW12} that for any integer $s\geq 4$ and constant $\sigma\in (0,\frac{1}{2})$, if the initial data $\ww_0\in H^s_{2\sigma} (\T\times (0,\frac{1}{2}))$, then the vorticity system \eqref{e:VFAHEEinx&a} has a unique solution $\ww\in C([0,T];H^s_\sigma (\T\times (0,\frac{1}{2})))\cap C^1([0,T];H^{s-1} (\T\times (0,\frac{1}{2})))$, where the life-span $T$ depends on $\|\ww_0\|_{H^s}$, $s$ and $\sigma$ only. Here, the function space $H^s_\sigma (\T\times (0,\frac{1}{2}))$ is defined by
\begin{equation*}
H^s_\sigma (\T\times (0,\frac{1}{2})) := \left\{ \ww\in H^s (\T\times (0,\frac{1}{2})) ;\; 0<\sigma\leq \da\ww\leq\frac{1}{\sigma} \right\},
\end{equation*}
and $H^s (\T\times (0,\frac{1}{2}))$ is the standard $H^s$ space defined on $\T\times (0,\frac{1}{2})$ equipped with the standard $H^s$ norm:
\begin{equation}\label{e:DefnofH^sNorm}
\|\ww\|_{H^s}^2 := \sum_{|\alpha |\leq s} \int_0^\frac{1}{2} \int_\T \left| \dx^{\alpha_1} \da^{\alpha_2} \ww \right|^2 \; dxda.
\end{equation}
Using \eqref{e:da&L_r&rdr}, one can verify that the standard $H^s$ norm \eqref{e:DefnofH^sNorm} with respect to $x$ and $a$ is equivalent to the $H^s_L$ norm defined in \eqref{e:defnH}, and hence, the  function spaces $H^s (\T\times (0,\frac{1}{2}))$ and $H^s_\sigma (\T\times (0,\frac{1}{2}))$ are equivalent to $H^s_L$ defined in \eqref{e:DefnofH^s_L} and $H^s_{L,\sigma}$ defined in \eqref{e:DefnofH^s_Lsigma} respectively. Therefore, the existence stated in Theorem \ref{thm:Well-posed} follows directly from Theorem 2.5 in \cite{MW12} and the equivalence Lemma \ref{lem:Equivalence}.

%==============================================================================
%
%   Section 5  Mathematical Justification of the Formal Derivation
%
%==============================================================================
\section{Mathematical Justification of the Formal Derivation}\label{s:MathematicalJustification}
The aim of this section is to provide a rigorous mathematical justification of the formal derivation of the hydrostatic Euler equations \eqref{e:AHEE}. In other words, we will verify the formal derivation introduced in Subsection \ref{ss:FormalDerivation} by proving Theorem \ref{thm:MathJustification}.

Let us begin by rewriting the axisymmetric rescaled Euler equations \eqref{e:AREE} in terms of the differential operator $L_r:=\frac{1}{r}\dr$ and new dependent variables $\uu_\e$, $\vv_\e$ and $\ww_\e$, which are analogous to the variables $\uu$, $\vv$ and $\ww$ introduced in \eqref{e:NewVar&Operator} previously. Define
\begin{equation}\label{e:NewVarforAREE}
\begin{aligned}
\uu_\e &:=u^x_\e,& \vv_\e &:= r u^r_\e,\text{ and}& \ww_\e &:= L_r \uu_\e - \frac{\e^2}{r^2} \dx\vv_\e.
\end{aligned}
\end{equation}
Using \eqref{e:NewVarforAREE}, we can express the axisymmetric rescaled Euler equations \eqref{e:AREE} as the following equations: for $(t,x,r) \in (0,T) \times
\T \times (0,1)$,
\begin{equation}\label{e:AREENV}
\left\{\begin{aligned}
\dt \uu_\e + \uu_\e \dx \uu_\e + \vv_\e L_r \uu_\e & = - \dx p_\e \\
\e^2 \left\{\dt \left( \frac{\vv_\e}{r} \right) + \uu_\e \dx \left( \frac{\vv_\e}{r} \right) + \vv_\e L_r \left( \frac{\vv_\e}{r} \right) \right\} & = - \dr p_\e \\
\dx \uu_\e + L_r \vv_\e & = 0 \\
\vv_\e|_{r=0,1} & = 0\\
\uu_\e|_{t=0} & = \uu_{\e 0}:=u^x_{\e 0}.
\end{aligned}\right.
\end{equation}
In terms of the change of variables \eqref{e:NewVarforAREE}, the corresponding vorticity system, called the axisymmetric rescaled vorticity system, becomes: for $(t,x,r)\in (0,T)\times \T \times (0,1)$,
\begin{equation}\label{e:VFAREENV}
\left\{\begin{aligned}
\dt \ww_\e + \uu_\e \dx \ww_\e + \vv_\e L_r \ww_\e & = 0 \\
\dx\uu_\e + L_r\vv_\e & = 0\\
\vv_\e |_{r=0,1} & = 0\\
\ww_\e|_{t=0} & = \ww_{\e 0} := L_r \uu_{\e 0}.
\end{aligned}\right.
\end{equation}

Now, the question becomes as follows: for any solution $(\vec{u},p):=(u^x e_x +u^r e_r,p)$ to the axisymmetric hydrostatic Euler equations \eqref{e:AHEE}, does there exist a sequence $\{(\uu_\e,\vv_\e,\ww_\e)\}_{\e>0}$ such that $(\uu_\e,\vv_\e,\ww_\e)$ converges to $(\uu,\vv,\ww):=(u^x,r u^r,L_r u^x)$ as $\e$ goes to $0^+$? Under the sign condition \eqref{e:SignNV}, the answer is affirmative in the following $L^2$ sense:
\begin{prop}[Mathematical Justification of the Formal Derivation]\label{prop:MathJustificationNV}
Under the hypotheses of Theorem \ref{thm:MathJustification}, if there exist constants $C_0>0$ and
$\beta \in(0,4]$ such that
\[
\left.\int_\T \int^1_0 \left\{|\uu_\e-\uu|^2 + \e^2\left|\frac{\vv_\e-\vv}{r}\right|^2 + |\ww_\e-\ww|^2\right\}\;rdrdx\right|_{t=0} \le C_0 \e^\beta,
\]
then for all $t \in [0,T]$,
\begin{equation}\label{e:L^2MathJustificationNV}
\int_\T \int^1_0 \left\{|\uu_\e-\uu|^2+\e^2\left|\frac{\vv_\e-\vv}{r}\right|^2+|\ww_\e-\ww|^2\right\} \;rdrdx
\le \widetilde{C} \e^\beta,
\end{equation}
where the constant $\widetilde{C}$ depends only on $\sigma$, $\uu$, $\vv$, $\ww$, $C_0$ and $T$, but not on
$\e$ nor $(\uu_\e,\vv_\e,\ww_\e)$.
\end{prop}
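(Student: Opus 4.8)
The plan is to run a relative-entropy / weighted $L^2$ comparison estimate between the rescaled vorticity system \eqref{e:VFAREENV} and the limiting vorticity system \eqref{e:VFAHEENV3.2}, exactly paralleling the proof of the $L^2$ comparison principle (Proposition \ref{prop:L^2Comparison}). Set $\tilde{\uu} := \uu_\e - \uu$, $\tilde{\vv} := \vv_\e - \vv$, $\tilde{\ww} := \ww_\e - \ww$, and introduce the weighted energy functional
\[
E_\e(t) := \frac{1}{2}\int_\T\int_0^1 \left\{ \frac{|\tilde{\ww}|^2}{L_r\ww} + \e^2 \left|\frac{\tilde{\vv}}{r}\right|^2 \right\} rdrdx,
\]
whose coefficient $1/L_r\ww$ is well-defined and comparable to a constant thanks to the sign condition \eqref{e:SignNV} and the regularity assumed on $u^x$. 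The $|\tilde\uu|^2$ contribution in \eqref{e:L^2MathJustificationNV} will be recovered at the end from $\tilde\ww$ via the energy estimate $\|\tilde\uu\|_{L^2(rdrdx)} \le C\|\tilde\ww\|_{L^2(rdrdx)}$ (the analogue of \eqref{e:EnergyEstfor2Solns}), since $\tilde\uu = -L_r\mathcal{A}(\tilde\ww) + \text{(an $O(\e^2)$ correction coming from the definition of $\ww_\e$ in \eqref{e:NewVarforAREE})}$.

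First I would subtract the two vorticity equations to get the evolution equation for $\tilde\ww$:
\[
\dt\tilde{\ww} + \uu_\e \dx \tilde{\ww} + \vv_\e L_r \tilde{\ww} + \tilde{\uu}\,\dx\ww + \tilde{\vv}\, L_r\ww = 0,
\]
multiply by $\tilde{\ww}/L_r\ww$, and integrate against $rdrdx$; this is identical in structure to \eqref{e:dttildeww}--\eqref{e:eliminatev}. The transport terms $\uu_\e\dx\tilde\ww + \vv_\e L_r\tilde\ww$ are handled by moving the derivative onto $\tilde\ww/L_r\ww$, producing a term with the material derivative $(\dt + \uu_\e\dx + \vv_\e L_r)(1/L_r\ww)$; this material derivative is bounded since $\ww$ solves its own equation (so $L_r\ww$ evolves by a known equation) and $\uu_\e, \vv_\e$ are controlled by $\ww_\e$, which in turn one controls using that $(\vec u_\e, p_\e)$ is assumed smooth. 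The term $\int (\dx\ww / L_r\ww)\tilde\uu\tilde\ww\, rdrdx$ is bounded by $C_{\sigma,\ww}(\|\tilde\uu\|_{L^2}^2 + \|\tilde\ww\|_{L^2}^2) \le C\,E_\e$ after using $\|\tilde\uu\|_{L^2}\le C\|\tilde\ww\|_{L^2} + C\e^2\|\dx\vv_\e/r\|_{L^2}$. The crucial term is $-\int \tilde\vv\,\tilde\ww\, rdrdx$: here I would \emph{not} expect an exact cancelation (as in \eqref{e:NonlinearCancelationNVfor2Solns}), because $\ww_\e$ is no longer exactly $L_r\uu_\e$ — it carries the extra $-\frac{\e^2}{r^2}\dx\vv_\e$ piece. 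Instead, writing $\tilde\ww = L_r\tilde\uu - \frac{\e^2}{r^2}\dx\vv_\e$ and using $\dx\tilde\uu = -L_r\tilde\vv$, integration by parts (via the fundamental-theorem-of-calculus identity for $L_r$ and $rdr$, using $\tilde\vv|_{r=0,1}=0$) turns $-\int\tilde\vv\,L_r\tilde\uu\,rdrdx$ into $\frac12\int\dx(|\tilde\uu|^2)\,rdrdx = 0$ by $x$-periodicity, leaving only the genuinely small remainder $+\e^2\int \tilde\vv\,\frac{\dx\vv_\e}{r^2}\,rdrdx$, which is $O(\e^2)$ times controlled quantities.

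The second half is the $r$-momentum equation. From $\eqref{e:AREENV}_2$ and the (trivial, since $p$ is $r$-independent) fact $\dr p = 0$ for the limit, I get an evolution equation for $\e^2 \tilde\vv/r$; multiplying by $\tilde\vv/r$ and integrating reproduces $\frac{\e^2}{2}\frac{d}{dt}\|\tilde\vv/r\|_{L^2(rdrdx)}^2$ plus transport terms (handled as above) plus a cross term involving $\dr\tilde p = \dr p_\e$, which pairs against $\tilde\vv$; this cross term, together with the $\int\tilde\uu\dx\tilde p$ term that would appear from an $\uu$-equation difference, is the place where the pressure is eliminated — but note that since we are only tracking $\tilde\ww, \tilde\vv$ (not $\tilde\uu$ directly in the energy), the pressure enters only through $\dr p_\e$, and one shows $\int (\dr p_\e)(\tilde\vv/r) rdrdx$ can be absorbed or is lower order using $\e^2$ weights. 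Summing the $\tilde\ww$-estimate and the $\e^2\tilde\vv/r$-estimate gives a differential inequality of the form $\frac{d}{dt}E_\e \le C_{\sigma,\uu,\vv,\ww}\, E_\e + C\e^4(\text{bounded terms})$, and since $\e^4 \le \e^\beta$ for $\beta\le 4$ (using $\e$ small, or absorbing constants), Grönwall's inequality together with the hypothesis $E_\e(0) \le C_0\e^\beta$ yields $E_\e(t) \le \widetilde C \e^\beta$ on $[0,T]$; converting back to $|\tilde\uu|^2$ via the energy estimate finishes \eqref{e:L^2MathJustificationNV}.

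The main obstacle I anticipate is \textbf{not} the pressure (the hydrostatic structure makes $\dr p = 0$ and the $r$-momentum equation decoupled-looking) but rather controlling the commutator/remainder terms that arise because $\ww_\e \ne L_r\uu_\e$: every place where the limiting proof used the exact identities $\ww = L_r\uu$ and the exact nonlinear cancelation, here one must carefully track the $-\frac{\e^2}{r^2}\dx\vv_\e$ correction and verify that (a) it produces only $O(\e^2)$-in-size contributions in the energy identity, and (b) the factors of $1/r$ and $1/r^2$ near the axis $r=0$ do not cause the weighted integrals to diverge — this is where the smoothness of $\vec u_\e$ as a genuine 3D axisymmetric vector field (so that $\vv_\e/r$ and $\dx\vv_\e/r^2$ remain bounded near $r=0$) must be invoked. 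A secondary technical point is ensuring all the "controlled by $\ww_\e$" bounds for $\uu_\e, \vv_\e$ hold with constants independent of $\e$, which again follows from the assumed smoothness of $(u^x_\e, u^r_\e, p_\e)$ but should be stated explicitly.
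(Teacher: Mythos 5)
Your proposal transplants the $L^2$ comparison principle (Proposition \ref{prop:L^2Comparison}) to the $\e$-problem, but that route cannot deliver the constant $\widetilde C$ the proposition demands, namely one independent of $\e$ \emph{and} of $(\uu_\e,\vv_\e,\ww_\e)$. After you multiply the $\tilde\ww$-equation by $\tilde\ww/L_r\ww$ and integrate the transport term by parts, you are left with $\tfrac12\int |\tilde\ww|^2\,(\uu_\e\dx+\vv_\e L_r)\bigl(\tfrac{1}{L_r\ww}\bigr)\,rdrdx$, whose estimation requires $L^\infty$ bounds on $\uu_\e,\vv_\e$ that are \emph{uniform in $\e$}; smoothness of $(u^x_\e,u^r_\e,p_\e)$ for each fixed $\e$ gives no such uniformity (in Proposition \ref{prop:L^2Comparison} the constant legitimately depends on both solutions through $M$ — here it must not). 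Splitting $\uu_\e=\uu+\tilde\uu$, $\vv_\e=\vv+\tilde\vv$ does not rescue the argument: it produces cubic terms, and in particular $\int \tilde\vv\,|\tilde\ww|^2\,L_r\bigl(\tfrac{1}{L_r\ww}\bigr)\,rdrdx$ carries no $\e^2$ weight, so it cannot be closed with the quantities you track. The same non-uniformity undercuts your recovery of $\|\tilde\uu\|_{L^2}$ from $\|\tilde\ww\|_{L^2}$, which needs $\e^2\|\dx\vv_\e/r^2\|_{L^2}$ to be small uniformly in $\e$, and your claim that the remainder $\e^2\int\tilde\vv\,\dx\vv_\e/r^2\,drdx$ is ``$O(\e^2)$ times controlled quantities'' (the latter is fixable by writing $\dx\vv_\e=\dx\tilde\vv+\dx\vv$, so the $\dx\tilde\vv$ part is a perfect $x$-derivative and only limit-solution data remain, exactly as in \eqref{e:NonlinearCancelationforZ}, but as stated it is unjustified).

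The second genuine gap is your decision to drop $|\tilde\uu|^2$ from the energy functional: this destroys the pressure cancelation. In the radial estimate the term $-\int \dr p_\e\,\tfrac{\tilde\vv}{r}\,rdrdx$ becomes, after integration by parts and the incompressibility of the difference field, $\int \dx p_\e\,\tilde\uu\,rdrdx$, which now has no partner to cancel against and is not ``lower order via $\e^2$ weights'': $\dr p_\e$ is $\e^2$ times the radial acceleration of the $\e$-solution, again not uniformly controlled. The paper's proof keeps both $|\tilde\uu|^2$ and $\e^2|\tilde\vv/r|^2$ in the kinetic part precisely so that the pressure difference pairs with the divergence-free difference and vanishes, and it replaces your plain weight $1/L_r\ww$ by the Bregman divergence of the convex function $F$ of Lemma \ref{lem:ConvexFunction} with $\dww^2 F=\frac{\uu-\kappa}{L_r\ww}$. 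That choice accomplishes two things your scheme misses: the transport of the entropy by $(\uu_\e,\vv_\e)$ integrates to zero exactly by incompressibility, so no $\e$-solution coefficients survive except in $I_5$, which is only linear in $\tilde\ww$ and is absorbed against $\|\tilde\uu\|_{L^2}$ (another reason $|\tilde\uu|^2$ must sit in the functional); and the dangerous bilinear term $Y=\int\uu\,\tilde\vv\,\tilde\ww\,rdrdx$ generated by the kinetic part combines with $X$ into $Z=\kappa\int\tilde\vv\,\tilde\ww\,rdrdx$ with a \emph{constant} coefficient, so the nonlinear cancelation applies without derivatives falling on a variable weight. Your identification of the $\e^2\dx\vv_\e/r^2$ correction as the obstruction to exact cancelation is correct, but without the relative-entropy structure above the Gr\"{o}nwall constant you obtain depends on the family $(\uu_\e,\vv_\e,\ww_\e)$, which is exactly what the statement excludes.
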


It is worth noting that Theorem \ref{thm:MathJustification} is a direct consequence of Proposition \ref{prop:MathJustificationNV}. Following the approach in \cite{Bre03}, we will prove Proposition \ref{prop:MathJustificationNV} by using the entropy method. Under appropriate modifications, our proof is similar to the proof in \cite{Bre03}, in which the formal derivation of the two-dimensional hydrostatic Euler equations was rigorously justified. These modifications are necessary because the structure of the axisymmetric rescaled Euler equations \eqref{e:AREENV} is different\footnote{More precisely, the axisymmetric rescaled Euler equations \eqref{e:AREENV} is not equivalent to the two-dimensional rescaled Euler equations even if we apply the reduction argument stated in Subsection \ref{ss:ExistenceviaReduction}.} than that of the two-dimensional rescaled Euler equations\footnote{See equations (9.2) in \cite{MW12} for instance.}.

\begin{proof}[Proof of Proposition \ref{prop:MathJustificationNV}]
In this proof we will first define a convex functional that is equivalent to the left hand side of \eqref{e:L^2MathJustificationNV}. Then we will derive a growth rate control on this convex functional. The main difficulty of this proof is that the terms involving $\vv_\e -\vv$ may cause a loss of one $\epsilon$, which is corresponding to the $x$-derivative loss caused by the hydrostatic limit. However, this $\epsilon$ loss can be avoided by a well-chosen convex functional due to the nonlinear cancelation.

First of all, we are going to define the convex functional by using the following convex (in $\ww$) function:

\begin{lem}[Convex Function]\label{lem:ConvexFunction}

For any given $\uu$ and $\ww$ satisfying the sign condition \eqref{e:SignNV}, there exist a constant $\kappa$, and a smooth\footnote{Here, ``smooth" means that for any fixed $(t,x)\in [0,T]\times\T$, the functions $F$, $\dt F$ and $\dx F$ are twice continuously differentiable with respect to $\ww$, and the partial derivatives $\dww^2 F$, $\dt\dww^2 F$, $\dx\dww F$ and $\dx\dww^2 F$ are bounded in $[0,T]\times\T\times\R$.} and strongly convex\footnote{That is, there exists a constant $c>0$ such that $\dww^2 F \geq c$.} (in $\ww$) function $F:[0,T]\times\T\times\R\to\R$ such that for all $(t,x,r)\in [0,T]\times\T\times [0,1]$,
\begin{equation}\label{e:dww^2F>1}
\dww^2 F \left( t,x,\ww(t,x,r) \right) = \frac{\uu (t,x,r) - \kappa}{L_r \ww (t,x,r)} \geq 1.
\end{equation}
Here, the $C^3$ norm of $F$ depends on $\uu$, $\ww$, and the $\sigma$ stated in the sign condition \eqref{e:SignNV}, but the constant $\kappa$ depends on $\uu$ and $\ww$ only.
\end{lem}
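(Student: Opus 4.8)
The plan is to realize $F$ as the twice-iterated antiderivative, in its third argument, of a single function $G=G(t,x,\ww)$ that on the range of $\ww(t,x,\cdot)$ is prescribed by the right-hand side of \eqref{e:dww^2F>1} and is then suitably extended. The sign condition \eqref{e:SignNV} is exactly what makes this prescription meaningful. Working in the cross-sectional-area variable $a=\frac12 r^2$ of \eqref{e:Defnofa}, for which $L_r=\da$ by \eqref{e:da&L_r&rdr}, the hypothesis $\da\ww=L_r\ww\ge\sigma>0$ says that for each fixed $(t,x)$ the map $a\mapsto\ww(t,x,a)$ is a strictly increasing smooth diffeomorphism of $[0,\frac12]$ onto the interval $[\ww_-(t,x),\ww_+(t,x)]$, where $\ww_+(t,x)$ and $\ww_-(t,x)$ are the values of $\ww$ at $r=1$ and $r=0$; moreover $\ww_+(t,x)-\ww_-(t,x)=\int_0^{1/2}\da\ww\,da\ge\frac\sigma2$, so the image interval has length bounded below uniformly in $(t,x)$. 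Let $a=\mathcal I(t,x,\ww)$ denote the inverse map. Because $\da\ww\ge\sigma$ is uniformly positive, the implicit function theorem shows that $\mathcal I$ inherits the smoothness of $\uu$ and $\ww$, up to the boundary of the slab $\mathcal R:=\{(t,x,\ww):\ww_-(t,x)\le\ww\le\ww_+(t,x)\}$, with all its $t$, $x$ and $\ww$ derivatives bounded in terms of the corresponding derivatives of $\ww$ and of $\sigma^{-1}$.

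I would then fix $\kappa:=\min_{[0,T]\times\T\times[0,1]}(\uu-2L_r\ww)$, which is finite and depends only on $\uu$ and $\ww$, and define $G$ on the slab $\mathcal R$ by $G(t,x,\ww):=(\uu-\kappa)/(L_r\ww)$ with the radial argument of $\uu$ and $L_r\ww$ set equal to $\mathcal I(t,x,\ww)$, that is, $(\uu-\kappa)/(L_r\ww)$ read off through the inverse map. By the choice of $\kappa$ one has $G\ge2$ on $\mathcal R$, while $L_r\ww\ge\sigma$ together with the boundedness of $\uu$ and $L_r\ww$ gives $G\le C_G:=\sigma^{-1}(\sup\uu-\kappa)$; by the previous step $G$ is smooth on $\mathcal R$ with all derivatives bounded in terms of $\uu$, $\ww$ and $\sigma$. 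Next I would extend $G$ to a function $\widetilde G$ on all of $[0,T]\times\T\times\R$ with $\widetilde G\equiv G$ on $\mathcal R$, $1\le\widetilde G\le C_G$ everywhere, $\widetilde G-1$ supported in a compact $\ww$-interval uniformly in $(t,x)$, and $\dt\widetilde G$, $\dx\widetilde G$, $\dww\widetilde G$, $\dx\dww\widetilde G$ all bounded. Concretely: after extending $\uu$ and $\ww$ smoothly in $a$ a little beyond $[0,\frac12]$ while keeping $\da\ww\ge\frac\sigma2$, the inverse $\mathcal I$, and hence $G$, are defined and smooth on a uniform neighbourhood $\mathcal N\supset\mathcal R$ on which still $G\ge1$ by continuity; then, with a cutoff $\theta$ that equals $1$ on $\mathcal R$, is supported in $\mathcal N$, and has bounded derivatives, set $\widetilde G:=1+\theta(G-1)$ on $\mathcal N$ and $\widetilde G:=1$ off $\mathcal N$, which stays $\ge1$ since $G-1\ge0$ and $\theta\ge0$ on $\mathcal N$.

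Finally I would set $F(t,x,\ww):=\int_0^{\ww}(\ww-\eta)\,\widetilde G(t,x,\eta)\,d\eta$, so that $\dww F=\int_0^{\ww}\widetilde G(t,x,\eta)\,d\eta$ and $\dww^2 F=\widetilde G(t,x,\ww)$; then $F$ is strongly convex with $1\le\dww^2 F\le C_G$, and when $\ww=\ww(t,x,r)$ for $r\in[0,1]$ the triple $(t,x,\ww)$ lies in $\mathcal R$, so $\dww^2 F(t,x,\ww(t,x,r))=G(t,x,\ww(t,x,r))$, which by the definition of $G$ equals $(\uu(t,x,r)-\kappa)/L_r\ww(t,x,r)$ and is $\ge1$ by the choice of $\kappa$; this is \eqref{e:dww^2F>1}. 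Differentiating under the integral sign, $\dt\dww^2 F=\dt\widetilde G$ and $\dx\dww^2 F=\dx\widetilde G$ are bounded, $\dx\dww F=\int_0^{\ww}\dx\widetilde G(t,x,\eta)\,d\eta$ is bounded because $\dx\widetilde G$ has compact $\ww$-support uniformly in $(t,x)$, and $\dt F$, $\dx F$ are twice continuously differentiable in $\ww$ because $\dt\widetilde G$ and $\dx\widetilde G$ are continuous; since every constant produced is controlled by $\uu$, $\ww$ and $\sigma$, this gives the asserted dependence of the bounds on $F$ and the $\uu$-and-$\ww$-only dependence of $\kappa$. The step I expect to be the real obstacle is the extension in the second paragraph: one must leave the \emph{moving} region $\mathcal R$ and produce a global $\widetilde G$ that is simultaneously bounded below by $1$, bounded above, and has all mixed $t$/$x$/$\ww$ derivatives bounded \emph{uniformly in} $(t,x)$, and it is precisely the uniform positivity $L_r\ww\ge\sigma$ — which controls both the inverse $\mathcal I$ and the thickness of $\mathcal R$ — that makes this work.
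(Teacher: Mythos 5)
Your proposal is correct and follows essentially the same route as the paper: under the sign condition \eqref{e:SignNV} you invert the strictly increasing map $r\mapsto\ww(t,x,r)$ (the paper's map $R$, your $\mathcal{I}$ in the variable $a$), prescribe $\dww^2 F=(\uu-\kappa)/(L_r\ww)$ on the range of $\ww$ through this inverse, integrate twice in $\ww$, extend smoothly off the range, and choose $\kappa$ to force the lower bound. Your choice $\kappa=\min(\uu-2L_r\ww)$ versus the paper's $\kappa=\inf\uu-\sup L_r\ww$ falls within the nonuniqueness the paper itself notes, and your explicit cutoff extension merely fills in the step the paper dismisses as ``not unique and not important.''
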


It is worth noting that both $\kappa$ and $F$ stated in Lemma \ref{lem:ConvexFunction} are independent of $\e$ and $(\uu_\e,\vv_\e,\ww_\e)$. Furthermore, the constant $1$ stated in \eqref{e:dww^2F>1} is not a crucial value: the proof of Proposition \ref{prop:MathJustificationNV} will also work if it is replaced by any other positive constant. Assuming Lemma \ref{lem:ConvexFunction}, which will be shown at the end of this section, for the moment, we can define the convex functional by using the convex function $F$ as follows.

For any given $(\uu,\vv,\ww)$, we define the following convex functional
\begin{equation}\label{e:DefnofConvexFunctnal}
L_{\e}(t):=L_{k,\e}(t)+L_{c,\e}(t),
\end{equation}
where the kinetic energy part $L_{k,\e}$ and the convex part $L_{c,\e}$ are given by
\begin{equation}\label{e:DefnofConvexFunctnal(KE&ConvexParts)}
\left\{
\begin{aligned}
L_{k,\e}(t) &:= \frac{1}{2}\int_\T \int_0^1 \left\{ |\uu_\e - \uu|^2 + \e^2 \left| \frac{\vv_\e - \vv}{r} \right|^2 \right\} \;rdrdx \\
L_{c,\e}(t) &:= \frac{1}{2}\int_\T \int_0^1 \{ F(t,x,\ww_\e) - F(t,x,\ww) \\
& \qquad\qquad\qquad\qquad - \dww F(t,x,\ww) (\ww_\e - \ww) \} \;rdrdx.
\end{aligned}
\right.
\end{equation}

Using the smoothness and convexity of $F$, one may verify that the convex functional $L_{\e}$ is equivalent to the left hand side of \eqref{e:L^2MathJustificationNV}, so in order to prove Proposition \ref{prop:MathJustificationNV}, it suffices to show
\[
L_{\e} (t) \lesssim \e^\beta
\]
for $\beta\in (0,4]$ provided that $L_{\e}(0)$ does initially.

A direct computation, which we leave to the interested reader, yields
\begin{lem}\label{lem:TimeDerivatives}
\begin{equation}\label{e:TimeDerivatives}
\left\{\begin{aligned}
\dfrac{d}{dt} L_{k,\e} &= I_1 + I_2 + Y + R \\
\dfrac{d}{dt} L_{c,\e} &= I_3 + I_4 + I_5 + X,
\end{aligned}\right.
\end{equation}
where
\begin{align*}
I_1 &:= -\frac{1}{2} \int_\T \int_0^1 \dx \uu \left\{ |\uu_\e - \uu|^2 + \e^2 \left| \frac{\vv_\e - \vv}{r} \right|^2 \right\} \;rdrdx, \\
I_2 &:= -\e^2 \int_\T \int_0^1 \dx \left(\frac{\vv}{r}\right) \left( \uu_\e - \uu \right) \left( \frac{\vv_\e - \vv}{r} \right) \;rdrdx \\
& \qquad \qquad \qquad \qquad -\e^2 \int_\T \int_0^1 \dr \left(\frac{\vv}{r}\right) \left| \frac{\vv_\e - \vv}{r} \right|^2 \;rdrdx, \\
I_3 &:= - \int_\T \int_0^1 \dx \ww \dww^2 F(t,x,\ww) \left( \uu_\e - \uu \right) \left( \ww_\e - \ww \right) \;rdrdx, \\
I_4 &:= \int_\T \int_0^1 \{ \dt F(t,x,\ww_\e) - \dt F(t,x,\ww) - \dt\dww F(t,x,\ww) \left( \ww_\e - \ww \right) \} \;rdrdx, \\
I_5 &:= \int_\T \int_0^1 \uu_\e \{ \dx F(t,x,\ww_\e) - \dx F(t,x,\ww) - \dx\dww F(t,x,\ww) \left( \ww_\e - \ww \right) \} \;rdrdx, \\
X &:= - \int_\T \int_0^1 \dww^2 F(t,x,\ww) L_r\ww \left( \vv_\e - \vv \right) \left( \ww_\e - \ww \right) \;rdrdx, \\
Y &:= \int_\T \int_0^1 \uu \left( \vv_\e - \vv \right) \left( \ww_\e - \ww \right) \;rdrdx, \\
R &:= -\e^2 \int_\T \int_0^1 \left( \dt + \vv L_r \right) \left( \frac{\vv}{r} \right) \left( \frac{\vv_\e - \vv}{r} \right) \;rdrdx.
\end{align*}
\end{lem}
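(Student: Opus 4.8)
The plan is to prove \eqref{e:TimeDerivatives} by differentiating the kinetic energy functional $L_{k,\e}$ and the convex functional $L_{c,\e}$ of \eqref{e:DefnofConvexFunctnal} under the integral sign --- legitimate since $\uu_\e,\vv_\e,\ww_\e,\uu,\vv,\ww$ are all smooth --- then substituting the evolution equations and reorganizing. Three structural mechanisms do most of the work, and it helps to name them first. First, every nonlinear or nonlocal term is split via $\uu_\e=\uu+(\uu_\e-\uu)$, $\vv_\e=\vv+(\vv_\e-\vv)$, and $\ww_\e=\ww+(\ww_\e-\ww)$, which separates a piece that transports the ``error'' along the rescaled flow $(\uu_\e,\vv_\e)$ from a piece that pairs the error against a gradient of the background $(\uu,\vv,\ww)$. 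Second, the transport-of-the-error pieces integrate to zero: after integration by parts they are governed by the incompressibility $\dx\uu_\e+L_r\vv_\e=0$ (from $\eqref{e:AREENV}_3$) and the boundary condition $\vv_\e|_{r=0,1}=0$ (from $\eqref{e:AREENV}_4$), while the smoothness of the solutions at the symmetry axis makes every boundary contribution at $r=0$ vanish as well. Third, the pressure terms occurring in $\frac{d}{dt}L_{k,\e}$ cancel one another after integration by parts, using the incompressibility of the error $\dx(\uu_\e-\uu)+L_r(\vv_\e-\vv)=0$ and the fact that $p=p(t,x)$, so $\dr p=0$.

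For $\frac{d}{dt}L_{k,\e}$ I would substitute $\eqref{e:AREENV}_1$ and $\eqref{e:AREENV}_2$ for $\dt\uu_\e$ and $\e^2\dt(\vv_\e/r)$, and the hydrostatic momentum equation $\dt\uu+\uu\dx\uu+\vv L_r\uu=-\dx p$ (that is, $\eqref{e:AHEE}_1$ expressed in the variables $\uu=u^x$, $\vv=ru^r$, $\ww=L_r u^x$) together with $\dr p=0$ for $\dt\uu$; since there is no limiting equation for $\e^2\dt(\vv/r)$, the terms $-\e^2\dt(\vv/r)$ and $-\e^2\vv L_r(\vv/r)$ are carried along untouched and form $R$. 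Once the transport-of-error and the pressure pieces are removed by the second and third mechanisms, a single integration by parts in $r$ finishes the identification: the term $-\iint(\uu_\e-\uu)(\vv_\e-\vv)L_r\uu\,rdrdx$ produced by the $\uu$-equation is rewritten, using $L_r(\vv_\e-\vv)=-\dx(\uu_\e-\uu)$ and $\vv_\e=\vv=0$ on $r=0,1$, as $\iint\uu(\vv_\e-\vv)L_r(\uu_\e-\uu)\,rdrdx+\frac{1}{2}\iint\dx\uu\,|\uu_\e-\uu|^2\,rdrdx$; the relation $\ww_\e-\ww=L_r(\uu_\e-\uu)-\frac{\e^2}{r^2}\dx\vv_\e$ from \eqref{e:NewVarforAREE} converts the first integral into $Y$ plus a residual $O(\e^2)$ term; the second integral, added to the $-\iint(\uu_\e-\uu)^2\dx\uu\,rdrdx$ coming from the same equation, produces the $|\uu_\e-\uu|^2$ part of $I_1$; the residual $\e^2$ term, together with the $\e^2$ term arising from $\uu_\e\dx(\vv/r)$ in the $\vv$-equation, reassembles after one integration by parts in $x$ into the $\e^2\left|\frac{\vv_\e-\vv}{r}\right|^2$ part of $I_1$; and the remaining clean pairing of $\uu_\e-\uu$ and $\frac{\vv_\e-\vv}{r}$ against $\dx(\vv/r)$ and $\dr(\vv/r)$ is precisely $I_2$.

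For $\frac{d}{dt}L_{c,\e}$ I would differentiate the convex (relative-entropy type) density $F(t,x,\ww_\e)-F(t,x,\ww)-\dww F(t,x,\ww)(\ww_\e-\ww)$ by the chain rule, being careful with the explicit $(t,x)$-dependence of $F$ --- this is the only place Lemma \ref{lem:ConvexFunction} enters, and only through the asserted $C^3$ smoothness of $F$, not through any sign property --- and then substitute $\eqref{e:VFAREENV}_1$ and $\eqref{e:VFAHEENV}_1$ for $\dt\ww_\e$ and $\dt\ww$. The two chain-rule identities expressing $\bigl(\dww F(t,x,\ww_\e)-\dww F(t,x,\ww)\bigr)\dx\ww_\e$ and $\bigl(\dww F(t,x,\ww_\e)-\dww F(t,x,\ww)\bigr)L_r\ww_\e$ through $\dx$ and $L_r$ of the same density then split $\frac{d}{dt}L_{c,\e}$ into: the transport of the density by $(\uu_\e,\vv_\e)$, which integrates to zero by the second mechanism; the modulation remainders $I_4$ and $I_5$ carrying the partial $t$- and $x$-derivatives of $F$; and, from matching the occurrences of $\dww^2 F(t,x,\ww)$, the two cross terms $-\iint\dww^2 F(t,x,\ww)\,\dx\ww\,(\uu_\e-\uu)(\ww_\e-\ww)\,rdrdx=I_3$ and $-\iint\dww^2 F(t,x,\ww)\,L_r\ww\,(\vv_\e-\vv)(\ww_\e-\ww)\,rdrdx=X$.

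The main obstacle is not conceptual but a matter of careful bookkeeping: tracking every $\e^2$-weighted term so that none is lost, verifying that each boundary term at $r=0$ and $r=1$ genuinely vanishes (the ones at $r=0$ relying on $\vv_\e=\vv=0$ there), and performing the integration-by-parts reorganizations in the $L_{k,\e}$ computation in exactly the order that collapses the raw energy identity onto the prescribed combination $I_1+I_2+Y+R$. Once that is in place, the formula for $\frac{d}{dt}L_{c,\e}$ follows by inspection.
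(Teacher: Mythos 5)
Your proposal is correct and coincides with the ``direct computation'' that the paper leaves to the reader: the three mechanisms you isolate --- vanishing of the transport-of-the-error terms via $\dx\uu_\e+L_r\vv_\e=0$, $\vv_\e|_{r=0,1}=0$ and $x$-periodicity; cancellation of the pressure contributions using $\dx(\uu_\e-\uu)+L_r(\vv_\e-\vv)=0$ and $\dr p=0$; and the identity $\ww_\e-\ww=L_r(\uu_\e-\uu)-\frac{\e^2}{r^2}\dx\vv_\e$ to generate $Y$ --- do reproduce exactly $I_1+I_2+Y+R$ on the kinetic side and $I_3$, $I_4$, $I_5$, $X$ on the convex side. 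The only bookkeeping point worth flagging is the paper's own factor $\frac{1}{2}$ in the definition of $L_{c,\e}$: carried literally, your (correct) chain-rule computation yields $\frac{1}{2}\left(I_3+I_4+I_5+X\right)$, so that $\frac{1}{2}$ must be dropped (or $F$ rescaled) for the displayed identity, and for the later cancellation $Z=X+Y=\kappa\iint(\vv_\e-\vv)(\ww_\e-\ww)\,rdrdx$, to hold with these coefficients --- a slip in the paper's definitions rather than in your argument.
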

Applying Lemma \ref{lem:TimeDerivatives}, we have
\begin{equation}\label{e:dtL_e}
\dfrac{d}{dt} L_{\e} = \dfrac{d}{dt} L_{k,\e} + \dfrac{d}{dt} L_{c,\e} = \sum_{i=1}^5 I_i + Z + R,
\end{equation}
where $Z:=X+Y$, that is,
\[
Z = - \int_\T \int_0^1 \left\{ \dww^2 F(t,x,\ww) L_r\ww - \uu \right\} \left( \vv_\e - \vv \right) \left( \ww_\e - \ww \right) \;rdrdx.
\]
It is easy to check that for all $i=1$, $2$, $\cdots$, $5$,
\begin{equation}
|I_i| \leq C_{\sigma,\uu,\vv,\ww} \; L_\e,
\end{equation}
so we only have to control $Z$ and $R$. Indeed, both $Z$ and $R$ have the factor $\vv_\e - \vv$, which is problematic because it may create a loss of one $\epsilon$. Therefore, in order to obtain good estimates for $Z$ and $R$, one should eliminate the problematic factor $\vv_\e - \vv$. This can be done by making use of the well-chosen convex function $F$ and the integration by parts argument as follows.

First of all, to deal with $Z$, we apply \eqref{e:dww^2F>1} to rewrite
\[
Z = \kappa \int_\T \int_0^1 \left( \vv_\e - \vv \right) \left( \ww_\e - \ww \right) \;rdrdx.
\]
Using the identity
\[\ww_\e-\ww=L_r (\uu_\e-\uu) - \frac{\e^2}{r^2}\dx\vv_\e,\]
the integration by parts formula \eqref{e:IntnbyPartsforL_r&rdr}, and the incompressibility conditions \eqref{e:IncompressibilityConditnindx&Lr} and $\eqref{e:VFAREENV}_2$, we have
\begin{equation}\label{e:NonlinearCancelationforZ}
\begin{aligned}
Z &= \kappa \int_\T \int_0^1 \left( \vv_\e - \vv \right) L_r \left( \uu_\e - \uu \right) \;rdrdx \\
& \qquad - \kappa \e^2 \int_\T \int_0^1 \left( \frac{\vv_\e - \vv}{r} \right) \dx \left( \frac{\vv_\e - \vv}{r} \right) \;rdrdx \\
& \qquad - \kappa \e^2 \int_\T \int_0^1 \dx \left( \frac{\vv}{r} \right)  \left( \frac{\vv_\e - \vv}{r} \right) \;rdrdx \\
&= \frac{\kappa}{2} \int_\T \int_0^1 \dx \left\{ \left|\uu_\e - \uu\right|^2 - \e^2 \left|\frac{\vv_\e - \vv}{r}\right|^2 \right\} \;rdrdx \\
& \qquad - \kappa \e^2 \int_\T \int_0^1 \dx \left( \frac{\vv}{r} \right) \left( \frac{\vv_\e - \vv}{r} \right) \;rdrdx \\
&= - \kappa \e^2 \int_\T \int_0^1 \dx \left( \frac{\vv}{r} \right) \left( \frac{\vv_\e - \vv}{r} \right) \;rdrdx
\end{aligned}
\end{equation}
since all $\uu_\e$, $\vv_\e$, $\uu$ and $\vv$ are periodic in $x$. It is worth noting that in \eqref{e:NonlinearCancelationforZ} we applied the nonlinear cancelation argument to eliminate the leading order term. Combining \eqref{e:dtL_e}-\eqref{e:NonlinearCancelationforZ}, we have
\begin{equation}\label{e:dtL_e<CL_e+tildeR}
\dfrac{d}{dt} L_{\e} = \sum_{i=1}^5 I_i + Z + R \leq C_{\sigma,\uu,\vv,\ww} \; L_\e + \tilde{R},
\end{equation}
where $\tilde{R}$ is defined by
\[
\tilde{R} := - \e^2 \int_\T \int_0^1 \left( \dt + \kappa\dx + \vv L_r \right) \left( \frac{\vv}{r} \right) \left( \frac{\vv_\e - \vv}{r} \right) \;rdrdx.
\]
Finally, we can estimate $\tilde{R}$ by a simple integration by parts argument. Define
\[
q(t,x,r):=\int_0^r \left. \left( \dt + \kappa\dx + \vv L_r \right) \left( \frac{\vv}{r} \right) \right|_{(t,x,r)=(t,x,\tilde{r})} \;d\tilde{r}.
\]
Then using the integration by parts formula \eqref{e:IntnbyPartsforL_r&rdr}, and the incompressibility conditions \eqref{e:IncompressibilityConditnindx&Lr} and $\eqref{e:VFAREENV}_2$, we have
\begin{equation}\label{e:EstimatefortildeR}
\begin{aligned}
\tilde{R} &= - \e^2 \int_\T \int_0^1 L_r q \left( \vv_\e - \vv \right) \;rdrdx = \e^2 \int_\T \int_0^1 q L_r \left( \vv_\e - \vv \right) \;rdrdx \\
&= - \e^2 \int_\T \int_0^1 q \dx \left( \uu_\e - \uu \right) \;rdrdx = \e^2 \int_\T \int_0^1 \dx q  \left( \uu_\e - \uu \right) \;rdrdx \\
&\leq C_q L_\e + \e^4.
\end{aligned}
\end{equation}
Combining \eqref{e:dtL_e<CL_e+tildeR} and \eqref{e:EstimatefortildeR}, we obtain
\[
\dfrac{d}{dt} L_{\e} \leq C_{\sigma,\uu,\vv,\ww} \; L_\e + \e^4.
\]
Applying the Gr\"{o}nwall's inequality, we have
\[
L_{\e} (t) \leq \left\{ L_{\e} (0) + \e^4 t \right\} e^{C_{\sigma,\uu,\vv,\ww} \; t},
\]
which implies \eqref{e:L^2MathJustificationNV} for $\beta\in (0,4]$.
\end{proof}

In order to complete the proof of Proposition \ref{prop:MathJustificationNV}, we will show Lemma \ref{lem:ConvexFunction} as follows:
\begin{proof}[Proof of Lemma \ref{lem:ConvexFunction}]
Following a similar argument in \cite{Bre03}, we will construct the convex function $F$ as follows. Since $\frac{1}{r}\dr\ww = L_r \ww \geq \sigma >0$, we know that for any fixed $(t,x)$, the mapping $r\mapsto \ww(t,x,r)$ is strictly increasing. Therefore, let
\[\Omega :=\{ (t,x,\tilde{\ww})\in [0,T]\times\T\times\R; \; \ww (t,x,0)\leq\tilde{\ww}\leq \ww(t,x,1)\},\]
and we can define a function $R:\Omega\to\R$ such that
\[
R(t,x,\tilde{\ww}):=r \quad\text{if}\quad\ww(t,x,r)=\tilde{\ww}.
\]
In other words, for any $(t,x,r)\in [0,T]\times\T\times [0,1]$ and $(t,x,\tilde{\ww})\in\Omega$, we have
\[
r=R(t,x,\ww (t,x,r)) \quad\text{and}\quad \tilde{\ww}=\ww(t,x,R(t,x,\tilde{\ww})).
\]
Now, for any fixed constant $\kappa\in\R$, we can define, via using this mapping $R$,
\[
G(t,x,\tilde{\ww}):= \frac{\uu(t,x,R(t,x,\tilde{\ww}))-\kappa}{L_r \ww (t,x,R(t,x,\tilde{\ww}))}
\]
for all $(t,x,\tilde{\ww})\in\Omega$. We can further define $F$ by integrating $G$ twice with respect to $\tilde{\ww}$, and extending $F$ smoothly in the variable $\tilde{\ww}$. The extension here is not unique and not important in the proof of Proposition \ref{prop:MathJustificationNV}. To obtain inequality \eqref{e:dww^2F>1}, $\kappa$ must be chosen appropriately, for example, we can choose $\kappa := \inf \uu - \sup L_r\ww$. Here, the choice of $\kappa$ is not unique as well.
\end{proof}

%==============================================================================
%
%   Section 6  Formation of Singularity
%
%==============================================================================
\section{Formation of Singularity}\label{s:FormationofSingularity}
In this section we will prove the blowup result (i.e., Theorem \ref{thm:FormationofSingularity}) for the axisymmetric hydrostatic Euler equations \eqref{e:AHEE}. The proof is based on the reduction argument introduced in Subsection \ref{ss:ExistenceviaReduction} and the recent blowup result in \cite{Won15}.

\begin{proof}[Proof of Theorem \ref{thm:FormationofSingularity}]
Using the \emph{new} differential operator and dependent variables \eqref{e:NewVar&Operator}, we can rewrite the axisymmetric hydrostatic Euler equations \eqref{e:AHEE} as follows: for $(t,x,r) \in (0,T) \times \R \times (0,1)$,
\begin{equation}\label{e:AHEENV}
\left\{\begin{aligned}
\dt \uu + \uu \dx \uu + \vv L_r \uu & = - \dx p \\
\dx \uu + L_r \vv & = 0 \\
\vv|_{r=0,1} & = 0\\
\uu|_{t=0} & = \uu_0.
\end{aligned}\right.
\end{equation}
Similar to Subsection \ref{ss:ExistenceviaReduction}, we can apply the change of variables
\begin{equation*}
\tilde{a}:=r^2 \quad\text{and}\quad \tilde{\vv}:=2\vv
\end{equation*}
to rewrite \eqref{e:AHEENV} as the following equations: for $(t,x,\tilde{a}) \in (0,T) \times \R \times (0,1)$,
\begin{equation}\label{e:AHEEinx&a}
\left\{\begin{aligned}
\dt \uu + \uu \dx \uu + \tilde{\vv} \partial_{\tilde{a}} \uu & = - \dx p \\
\dx \uu + \partial_{\tilde{a}} \tilde{\vv} & = 0 \\
\tilde{\vv}|_{\tilde{a}=0,1} & = 0\\
\uu|_{t=0} & = \uu_0,
\end{aligned}\right.
\end{equation}
which is exactly the same as the two-dimensional hydrostatic Euler equations if we see $\tilde{a}$ as the length instead of area.

Similarly, under the same change of variables, one may check that the hypothesis \eqref{e:BlowupHypothesis} becomes
\begin{equation}\label{e:BlowupHypothesisinx&a}
\left\{
\begin{aligned}
\uu_0 (\hat{x},\tilde{a}) &\equiv \widehat{\uu} \quad\text{for all }\tilde{a}\in [0,1]\\
\text{either}\quad\dx \partial_{\tilde{a}} \uu_0 (\hat{x},0) &= 0 \quad\text{or}\quad\dx \partial_{\tilde{a}} \uu_0 (\hat{x},1) = 0\\
\dx \partial_{\tilde{a}}^2 \uu_0(\hat{x},\tilde{a}) &< 0 \quad\text{for all }\tilde{a}\in (0,1),
\end{aligned}\right.
\end{equation}
where $\widehat{\uu}:=\widehat{u^x}$ is the given constant horizontal velocity. Without loss of generality, we can further assume that $\eqref{e:BlowupHypothesisinx&a}_2$ is just
\begin{equation}\label{e:BlowupHypothesisinx&aWLOG}
\dx \partial_{\tilde{a}} \uu_0 (\hat{x},0) = 0;
\end{equation}
otherwise, we can apply the change of variables $\tilde{\tilde{a}}:=1-\tilde{a}$ and $\tilde{\tilde{\vv}}:=-\tilde{\vv}$ as well.

Since the initial data $\uu_0$ satisfies the hypotheses $\eqref{e:BlowupHypothesisinx&a}_1$, $\eqref{e:BlowupHypothesisinx&a}_3$ and \eqref{e:BlowupHypothesisinx&aWLOG}, by Theorem 2.1 in \cite{Won15}, there exists a finite time $T>0$ such that if a solution $(\uu,\tilde{\vv},p)$ to the two-dimensional hydrostatic Euler equations \eqref{e:AHEEinx&a} remains smooth in the time interval $[0,T)$, then
\begin{equation}\label{e:FiniteTimeBlowupuudxuudxp}
\lim_{t\to T^-} \max\left\{ \|\uu(t)\|_{L^\infty}, \|\dx \uu(t)\|_{L^\infty}, \|\dx p(t)\|_{L^\infty} \right\}=\infty.
\end{equation}
Indeed, \eqref{e:FiniteTimeBlowupuudxuudxp} is equivalent to \eqref{e:FiniteTimeBlowupu^xdxu^xdxp} under the above change of variables, so Theorem \ref{thm:FormationofSingularity} is just a direct consequence of Theorem 2.1 in \cite{Won15}.
\end{proof}

%%      ---------------------------------------------------------------------
%%      ------------------------- APPENDIX ----------------------------------
%%      ---------------------------------------------------------------------

\appendix

%===========================================================
%
%        Appendix A : Basic Properties for the Axisymmetric Hydrostatic Euler Equations
%
%===========================================================
\section{Basic Properties for the Axisymmetric Hydrostatic Euler Equations}\label{app:BasicPropertiesforAHEE}

In this appendix we will study the basic properties for the solutions of the axisymmetric hydrostatic Euler equations \eqref{e:AHEE}. More precisely, we will show that the average horizontal velocity is conserved, and the axisymmetric hydrostatic Euler equations \eqref{e:AHEE} and its vorticity system \eqref{e:VFAHEE} are equivalent.

\begin{lem}[Conservation of the Average Horizontal Velocity]\label{lem:ConservationofAveu^x}
Let $(\vec{u},p):= (u^x e_x + u^r e_r,p)$ be a classical solution to the axisymmetric hydrostatic Euler equations \eqref{e:AHEE} in the periodic physical domain $\T \times (0,1)$ such that the pole condition \eqref{e:PoleConditnfor u^r} holds. Then
\begin{equation}\label{e:ConservationofAveu^x}
\int^1_0 u^x \; rdr \equiv \int^1_0 u^x_0 \; rdr.
\end{equation}
\end{lem}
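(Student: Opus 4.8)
The plan is to prove that the quantity $m(t,x):=\int^1_0 u^x(t,x,r)\,rdr$ is independent of both the spatial variable $x$ and the time $t$, which immediately yields \eqref{e:ConservationofAveu^x}.

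First I would check that $m$ does not depend on $x$. Differentiating under the integral sign and using the incompressibility condition $\eqref{e:AHEE}_2$, rewritten as $\dr(ru^r)=-r\,\dx u^x$, gives
\[
\dx m = \int^1_0 \dx u^x\,rdr = -\int^1_0 \dr(ru^r)\,dr = -\big[\,ru^r\,\big]^{r=1}_{r=0} = 0,
\]
since the boundary term at $r=1$ vanishes by $\eqref{e:AHEE}_3$ and the one at $r=0$ vanishes by the pole condition \eqref{e:PoleConditnfor u^r}. Hence $m=m(t)$.

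Next I would compute $\frac{d}{dt}m$ by multiplying the momentum equation $\eqref{e:AHEE}_1$ by $r$ and integrating over $r\in(0,1)$. Since $p$ is independent of $r$, the pressure term contributes $\dx p\int^1_0 r\,dr=\tfrac12\dx p$. For the transport terms, $\int^1_0 u^x\dx u^x\,rdr=\tfrac12\dx\!\int^1_0|u^x|^2\,rdr$; and integrating by parts in $r$, using once more that $ru^r$ vanishes at $r=0$ (pole condition) and $r=1$ (boundary condition) together with $\eqref{e:AHEE}_2$,
\[
\int^1_0 u^r\dr u^x\,rdr = -\int^1_0 u^x\,\dr(ru^r)\,dr = \int^1_0 u^x\,r\,\dx u^x\,dr = \tfrac12\dx\!\int^1_0|u^x|^2\,rdr.
\]
Collecting the contributions, I obtain
\[
\frac{d}{dt}m = -\dx\!\left(\int^1_0 |u^x|^2\,rdr + \tfrac12\,p\right).
\]

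Finally, the left-hand side is independent of $x$ by the previous step, while the right-hand side is an $x$-derivative of a function periodic in $x\in\T$; integrating this identity over $x\in\T$ (and recalling $|\T|=1$) annihilates the right-hand side, so $\frac{d}{dt}m=0$. Therefore $m(t)\equiv m(0)=\int^1_0 u^x_0\,rdr$, which is \eqref{e:ConservationofAveu^x}. I expect the only delicate points to be the justification of the two integrations by parts in $r$ and the vanishing of the boundary contributions at $r=0$; both are handled by the assumed classical regularity of $(\vec u,p)$ and the pole condition \eqref{e:PoleConditnfor u^r}.
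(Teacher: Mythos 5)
Your proposal is correct and follows essentially the same route as the paper: both integrate the momentum and incompressibility equations against $r\,dr$ over $(0,1)$, use the boundary condition $\eqref{e:AHEE}_3$ and the pole condition \eqref{e:PoleConditnfor u^r} to kill the boundary terms, deduce that $\int_0^1 u^x\,rdr$ is independent of $x$, and then integrate the averaged momentum equation over $\T$ to eliminate the right-hand side by periodicity. No gaps; the integration by parts and boundary contributions are handled exactly as in the paper's argument.
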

\begin{proof}
Integrating $\eqref{e:AHEE}_1$ and $\eqref{e:AHEE}_2$ with respect to $rdr$ over $(0,1)$, we have, after applying integration by parts, the boundary condition $\eqref{e:AHEE}_3$ and the pole condition \eqref{e:PoleConditnfor u^r},
\begin{equation}\label{e:AveEqts}
\left\{\begin{aligned}
\dt \int_0^1 u^x \; rdr + \dx \int_0^1 \left| u^x \right|^2 \; rdr &= - \frac{1}{2} \dx p \\
\dx \int_0^1 u^x \; rdr &= 0.
\end{aligned}\right.
\end{equation}
It follows directly from $\eqref{e:AveEqts}_2$ that $\int_0^1 u^x \; rdr$ is independent of $x$. Thus, integrating $\eqref{e:AveEqts}_1$ with respect to $x$ over $\T$ and using the $x$-periodicity of the functions $u^x$ and $p$, we obtain
\[
\dt \int_0^1 u^x \; rdr = 0,
\]
which implies the identity \eqref{e:ConservationofAveu^x}.
\end{proof}

\begin{lem}[Equivalence Lemma]\label{lem:Equivalence}
We have the following:
\begin{enumerate}[(i)]
\item Let $(u^x,u^r,\w)$ be a classical solution to the vorticity system \eqref{e:VFAHEE}. Define
      \begin{equation}\label{e:Formulaforp}
      p := - 2 \int^1_0 \left|u^x\right|^2 \; rdr + \tilde{p},
      \end{equation}
      where $\tilde{p}:=\tilde{p} (t)$ is any arbitrary function of $t$. Then $(\vec{u},p):= (u^x e_x + u^r e_r,p)$ solves the axisymmetric hydrostatic Euler equations \eqref{e:AHEE}, and satisfies the pole condition \eqref{e:PoleConditnfor u^r} and the compatibility condition \eqref{e:CompatibilityCondition}.
\item Let the initial horizontal velocity $u^x_0$ satisfy the compatibility condition \eqref{e:CompatibilityCondition} and $(\vec{u},p):= (u^x e_x + u^r e_r,p)$ be a classical solution to the axisymmetric hydrostatic Euler equations \eqref{e:AHEE}. If $u^r$ satisfies the pole condition \eqref{e:PoleConditnfor u^r} and $\w:=\dr u^x \in C^1 ([0,T]\times\T\times[0,1])$, then $(u^x,u^r,\w)$ solves the vorticity system \eqref{e:VFAHEE}, and $u^x$ satisfies the compatibility condition \eqref{e:CompatibilityCondition} for all later times $t>0$.
\end{enumerate}
\end{lem}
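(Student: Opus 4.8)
The plan is to prove both directions of the equivalence by one device: \emph{differentiating the momentum equation $\eqref{e:AHEE}_1$ in $r$ yields the vorticity transport equation $\eqref{e:VFAHEE}_1$, and conversely $\eqref{e:VFAHEE}_1$ recovers the momentum equation up to an $r$-independent function of $(t,x)$, which the compatibility condition pins down.} The only tools needed are: the fundamental theorem of calculus $\int_0^1 L_r f\, r\,dr = f|_{r=1}-f|_{r=0}$ and the integration-by-parts formula \eqref{e:IntnbyPartsforL_r&rdr}; the incompressibility relation, which after expanding $\frac1r\dr(ru^r)$ reads $\dr u^r=-\dx u^x-\frac{u^r}{r}$; and the fact that $\mathcal{A}$ in \eqref{e:DefnofA} is the inverse of $-L_r^2$ with zero Dirichlet data.

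\emph{Part (i).} Given a classical solution $(u^x,u^r,\w)$ of \eqref{e:VFAHEE}, I would first read off from $\eqref{e:VFAHEE}_2$ that $L_r u^x=-L_r^2\mathcal{A}(\w/r)=\w/r$, so $\dr u^x=\w$; hence the reconstructed $u^x$ really has vorticity $\w$. Applying $\dx$ to $\eqref{e:VFAHEE}_2$ and $\frac1r\dr(r\,\cdot\,)$ to $\eqref{e:VFAHEE}_3$ gives $\eqref{e:AHEE}_2$; evaluating $\eqref{e:VFAHEE}_3$ at $r=1$ gives $\eqref{e:AHEE}_3$, evaluating $ru^r=\dx\mathcal{A}(\w/r)$ at $r=0$ gives the pole condition \eqref{e:PoleConditnfor u^r}, and integrating $u^x=-L_r\mathcal{A}(\w/r)$ against $r\,dr$ gives the compatibility condition \eqref{e:CompatibilityCondition}, valid for every $t$ since the Dirichlet data of $\mathcal{A}$ vanish for every $t$. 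For the momentum equation I would set $q:=\dt u^x+u^x\dx u^x+u^r\dr u^x$; differentiating in $r$, using $\dr u^x=\w$ and the incompressibility relation to collapse $\w\dx u^x+\w\,\dr u^r$ to $-\frac{u^r}{r}\w$, gives $\dr q=\dt\w+u^x\dx\w+u^r\dr\w-\frac{u^r}{r}\w=0$ by $\eqref{e:VFAHEE}_1$. So $q=q(t,x)$, whence $q=2\int_0^1 q\,r\,dr$ (as $\int_0^1 r\,dr=1/2$); the $\dt u^x$ term of this integral vanishes by compatibility, and the two convective terms, after one integration by parts in $r$ using the pole and boundary conditions, combine to $\dx\int_0^1|u^x|^2\,r\,dr$, so $q=2\dx\int_0^1|u^x|^2\,r\,dr=-\dx p$ with $p$ as in \eqref{e:Formulaforp}. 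Finally $u^x|_{t=0}=-L_r\mathcal{A}(\w_0/r)$ is identified with $u^x_0$ by the same ``kill the $r$-independent constant with compatibility'' argument.

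\emph{Part (ii).} Given a classical solution $(\vec u,p)$ of \eqref{e:AHEE} with $u^r$ satisfying \eqref{e:PoleConditnfor u^r} and $\w:=\dr u^x\in C^1$, Lemma \ref{lem:ConservationofAveu^x} at once gives $\int_0^1 u^x\,r\,dr\equiv\int_0^1 u^x_0\,r\,dr=0$ for all $t$; this is the second assertion and also the input for the rest. Differentiating $\eqref{e:AHEE}_1$ in $r$, using $\dr p=0$ (since $p=p(t,x)$) and the same incompressibility cancellation as in Part (i), yields $\eqref{e:VFAHEE}_1$. For $\eqref{e:VFAHEE}_2$: $\mathcal{A}(\w/r)$ solves $-L_r^2\mathcal{A}(\w/r)=\w/r=L_r u^x$, so $L_r\big(u^x+L_r\mathcal{A}(\w/r)\big)=0$, whence $u^x+L_r\mathcal{A}(\w/r)$ depends on $(t,x)$ only and vanishes after integrating against $r\,dr$ (using compatibility and $\mathcal{A}(\w/r)|_{r=0,1}=0$). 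For $\eqref{e:VFAHEE}_3$: the candidate $\tilde u^r:=\frac1r\dx\mathcal{A}(\w/r)$ obeys $\frac1r\dr(r\tilde u^r)=\dx L_r\mathcal{A}(\w/r)=-\dx u^x$, the same elliptic identity as $u^r$, so $r(u^r-\tilde u^r)$ is $r$-independent and vanishes at $r=0$ by the pole condition and $\dx\mathcal{A}(\w/r)|_{r=0}=0$; hence $u^r=\tilde u^r$. The initial condition $\w|_{t=0}=\dr u^x_0=\w_0$ is immediate.

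\emph{Where the care goes.} There is no deep obstacle, but the recurring ``constants of integration'' — functions of $(t,x)$ produced whenever a radial derivative is undone — each have to be discharged using precisely one hypothesis: the compatibility condition \eqref{e:CompatibilityCondition} together with its propagation in time (Lemma \ref{lem:ConservationofAveu^x}), the pole condition \eqref{e:PoleConditnfor u^r}, the boundary condition $u^r|_{r=1}=0$, or the Dirichlet conditions built into $\mathcal{A}$. Tracking which hypothesis removes which constant, and checking that the assumed regularity ($\w\in C^1$, classical solutions) suffices to differentiate $\eqref{e:AHEE}_1$ and $\eqref{e:VFAHEE}_1$ in $r$ and to integrate by parts at the pole, is the only delicate bookkeeping.
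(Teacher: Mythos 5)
Your proposal is correct and follows essentially the same route as the paper: in part (i) you read off the incompressibility, boundary, pole, and compatibility conditions from the Biot--Savart formulae, recover the momentum equation by showing $\dr q=0$ and pinning down the $r$-independent function $q$ via integration against $r\,dr$ (the paper's \eqref{e:Formulaforq}), and in part (ii) you differentiate $\eqref{e:AHEE}_1$ in $r$, invoke Lemma \ref{lem:ConservationofAveu^x} for propagation of \eqref{e:CompatibilityCondition}, and verify the Biot--Savart formulae by uniqueness of the associated radial problems. The only cosmetic differences (writing $q=2\int_0^1 q\,r\,dr$ instead of evaluating $q$ at $r=1$, and an explicit first-order integration in place of the paper's appeal to the Green's function/uniqueness argument) do not change the substance.
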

\begin{proof}
The proof is just a direct checking and will be outlined as follows.
\begin{enumerate}[(i)]
\item It follows directly from the Biot-Savart formulae $\eqref{e:VFAHEE}_2$-$\eqref{e:VFAHEE}_3$ and the definition \eqref{e:DefnofA} of $\mathcal{A}$ that $\w= \dr u^x$, and $(\vec{u},p)$ satisfies the incompressibility condition $\eqref{e:AHEE}_2$, the boundary condition $\eqref{e:AHEE}_3$, the pole condition \eqref{e:PoleConditnfor u^r}, as well as the compatibility condition \eqref{e:CompatibilityCondition}. It remains to show that $(\vec{u},p)$ also satisfies the momentum equation $\eqref{e:AHEE}_1$ and the initial condition $\eqref{e:AHEE}_4$.

    Substituting $\w= \dr u^x$ into the vorticity equation $\eqref{e:VFAHEE}_1$ and using the incompressibility condition $\eqref{e:AHEE}_2$, we have
    \[
    \dr\left\{ \dt u^x + u^x \dx u^x + u^r \dr u^x \right\} = 0,
    \]
    and hence, a direct integration yields, via using the boundary condition $\eqref{e:AHEE}_3$,
    \begin{equation}\label{e:LHSofAHEE_1}
    \dt u^x + u^x \dx u^x + u^r \dr u^x = q,
    \end{equation}
    where $q:=\left\{ \dt u^x + u^x \dx u^x \right\}|_{r=1}$ is a function of $t$ and $x$ only. To see that $q$ is actually equal to $-\dx p$, we integrate \eqref{e:LHSofAHEE_1} with respect to $rdr$ over $(0,1)$, and obtain
    \begin{equation}\label{e:Formulaforq}
    \frac{1}{2}q = \int_0^1 u^x \dx u^x + u^r \dr u^x \; rdr = \dx \int_0^1 \left|u^x\right|^2\; rdr = - \frac{1}{2}\dx p.
    \end{equation}
    Here, we applied \eqref{e:CompatibilityCondition} in the first equality. In the second equality, we applied integration by parts, $\eqref{e:AHEE}_2$-$\eqref{e:AHEE}_3$ and \eqref{e:PoleConditnfor u^r}. The third equality follows directly from formula \eqref{e:Formulaforp}. Combining \eqref{e:LHSofAHEE_1} and \eqref{e:Formulaforq}, we verify the momentum equation $\eqref{e:AHEE}_1$.

    Finally, using the Biot-Savart formula $\eqref{e:VFAHEE}_2$ and $\w= \dr u^x$, we have
    \[
    \left. u^x \right|_{t=0} = -\frac{1}{r}\dr\mathcal{A}\left(\frac{\w_0}{r}\right)= -\frac{1}{r}\dr\mathcal{A}\left(\frac{1}{r}\dr u^x_0\right) = u^x_0,
    \]
    where the last equality holds under the compatibility condition \eqref{e:CompatibilityCondition}. This verifies the initial condition $\eqref{e:AHEE}_4$ and completes the proof of part (i).
\item First of all, a direct differentiation of initial condition $\eqref{e:AHEE}_4$ with respect to $r$ yields the initial condition $\eqref{e:VFAHEE}_4$. Next, differentiating the momentum equation $\eqref{e:AHEE}_1$ with respect to $r$ and using the incompressibility condition $\eqref{e:AHEE}_2$, we obtain the vorticity equation $\eqref{e:VFAHEE}_1$.

    According to Lemma \ref{lem:ConservationofAveu^x}, the compatibility condition \eqref{e:CompatibilityCondition} holds for all later times $t>0$ because it does initially. Finally, the Biot-Savart formulae $\eqref{e:VFAHEE}_2$-$\eqref{e:VFAHEE}_3$ can be verified by using the standard Green's function technique because $u^x$ and $u^r$ are the unique solutions to
    \[
    \left\{\begin{aligned}
    \frac{1}{r}\dr u^x &= \frac{\w}{r} \\
    \int_0^1 u^x \;rdr &\equiv 0
    \end{aligned}\right.\quad\text{and}\quad
    \left\{\begin{aligned}
    -\left(\frac{1}{r}\dr\right)^2 \left(r u^r\right) &= \frac{\dx\w}{r} \\
    \left. r u^r \right|_{r=0,1} &= 0
    \end{aligned}\right.
    \]
    respectively.
\end{enumerate}
\end{proof}

%===========================================================
%
%        Appendix B : Properties for the Operator L_r
%
%===========================================================
\section{Properties for the Operator $L_r$}\label{app:Operator L_r}
The aim of this appendix is to provide basic properties for the operator $L_r$ and its corresponding measure $rdr$. In particular, we will discuss the fundamental theorem of calculus, the integration by parts formula, the Poincar\'{e} type inequality, the Sobolev type inequality and the interpolation inequality.

Let us begin by stating without proof the following calculus facts for $L_r$ and $rdr$:

\begin{prop}[Calculus Facts for $L_r$ and $rdr$]\label{prop:CalFactforL_r}
Define the operator $L_r:=\frac{1}{r}\dr$. Then we have
\begin{enumerate}[(i)]
\item (Fundamental Theorem of Calculus) for any continuously differentiable function $f$, for any $a$, $b\in [0,1]$,
\begin{equation}\label{e:FTCforL_r&rdr}
f(b)=f(a)+\int^b_a L_r f \; rdr;
\end{equation}
\item (Integration by Parts Formula) for any continuously differentiable functions $f$ and $g$, for any $a$, $b\in [0,1]$,
\begin{equation}\label{e:IntnbyPartsforL_r&rdr}
\int^b_a f L_r g \; rdr = \left[ fg\right]^b_{r=a} - \int^b_a g L_r f \; rdr.
\end{equation}
\end{enumerate}
\end{prop}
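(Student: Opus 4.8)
The plan is to reduce both identities to the classical one-variable fundamental theorem of calculus and integration by parts, using the observation that the weight $r$ in the measure $rdr$ cancels precisely the factor $\frac1r$ in $L_r:=\frac1r\dr$. The first step is the elementary pointwise identity $(L_r f)(r)\,r=\dr f(r)$, valid for every continuously differentiable $f$ and every $r\in(0,1]$. Although $L_r f$ itself need not be defined at $r=0$, the product $(L_r f)(r)\,r$ extends continuously to $[0,1]$ with value $\dr f(0)$, so the integrand $(L_r f)\,r$ appearing in \eqref{e:FTCforL_r&rdr} and \eqref{e:IntnbyPartsforL_r&rdr} is continuous on $[0,1]$ and no improper integral arises.

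Given this, part (i) is immediate: for any $a,b\in[0,1]$,
\[
\int_a^b L_r f\;rdr=\int_a^b \dr f\;dr=f(b)-f(a)
\]
by the ordinary fundamental theorem of calculus applied to the continuous function $\dr f$. For part (ii) I would apply part (i) to the product $fg$, which is again continuously differentiable, together with the Leibniz rule $L_r(fg)=(L_r f)\,g+f\,(L_r g)$, obtaining
\[
[fg]_{r=a}^b=\int_a^b L_r(fg)\;rdr=\int_a^b (L_r f)\,g\;rdr+\int_a^b f\,(L_r g)\;rdr,
\]
and rearranging gives \eqref{e:IntnbyPartsforL_r&rdr}.

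There is essentially no genuine obstacle here; the only subtlety worth a sentence is the behaviour at the endpoint $r=0$, where $L_r f$ may be singular but $(L_r f)\,r=\dr f$ is not. Everything else is classical single-variable calculus, and the two displayed identities are really just the statement ``$(L_r f)\,rdr=(\dr f)\,dr$'' combined with the usual fundamental theorem of calculus and product rule.
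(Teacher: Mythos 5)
Your proof is correct; the paper itself states Proposition \ref{prop:CalFactforL_r} without proof, leaving it to the reader as elementary, and your argument is exactly the intended one: the weight $r$ cancels the $\frac{1}{r}$ in $L_r$, so that $(L_r f)\,r\,dr=(\dr f)\,dr$, and both identities reduce to the classical fundamental theorem of calculus and the product rule, with the only point worth noting being the harmless behaviour at $r=0$, which you address.
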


The proof of Proposition \ref{prop:CalFactforL_r} is elementary, so we leave this to the reader. As long as an operator $L$ and its corresponding measure $\mu$ satisfy the fundamental theorem of calculus, the following Poincar\'{e} type inequality follows immediately. In particular, inequality \eqref{e:PoincareIneqforL&mu} below holds when $L:=L_r$ and $d\mu := rdr$.

\begin{prop}[Poincar\'{e} Type Inequality]\label{prop:PoincareIneq}
Let $L$ be an operator and $\mu$ be a measure on $[0,1]$. Assume that the operator $L$ and its corresponding measure $\mu$ satisfy the fundamental theorem of Calculus: for any function $f$ in the domain of $L$, for any $a$, $b\in [0,1]$,
\begin{equation}\label{e:FTCforL&mu}
f(b)=f(a)+\int^b_a L f \; d\mu.
\end{equation}
If there exists a point $r_0\in [0,1]$ such that $f(r_0)=0$, then the following Poincar\'{e} type inequality holds:
\begin{equation}\label{e:PoincareIneqforL&mu}
\int^1_0 |f|^2 \; d\mu \le \mu \left( [0,1] \right)^2 \int^1_0 |Lf|^2 \; d\mu.
\end{equation}
\end{prop}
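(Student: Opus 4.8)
The plan is to reduce everything to the fundamental theorem of calculus \eqref{e:FTCforL&mu} and a single application of Cauchy--Schwarz. First I would apply \eqref{e:FTCforL&mu} with $a=r_0$, using the hypothesis $f(r_0)=0$, to obtain the representation $f(r)=\int_{r_0}^r Lf\;d\mu$ for every $r\in[0,1]$. Since $\mu$ is a (nonnegative) measure and the interval with endpoints $r_0$ and $r$ is contained in $[0,1]$, taking absolute values and enlarging the domain of integration gives the pointwise bound $|f(r)|\le\int_0^1 |Lf|\;d\mu$, uniformly in $r$; handling the sign of $r-r_0$ by passing to absolute values \emph{before} enlarging the domain is the only place where a word of care is needed.

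Next I would apply the Cauchy--Schwarz inequality to $\int_0^1 |Lf|\;d\mu=\int_0^1 1\cdot|Lf|\;d\mu$, which yields
\[
\int_0^1 |Lf|\;d\mu\le\left(\int_0^1 1\;d\mu\right)^{1/2}\left(\int_0^1 |Lf|^2\;d\mu\right)^{1/2}=\mu([0,1])^{1/2}\left(\int_0^1 |Lf|^2\;d\mu\right)^{1/2}.
\]
Squaring the resulting estimate for $|f(r)|$ gives the pointwise inequality $|f(r)|^2\le\mu([0,1])\int_0^1 |Lf|^2\;d\mu$, valid for all $r\in[0,1]$. Finally I would integrate this pointwise bound over $r\in[0,1]$ against $d\mu$, which contributes one more factor of $\mu([0,1])$ on the right-hand side and produces exactly \eqref{e:PoincareIneqforL&mu}.

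I do not expect any genuine obstacle here: the argument is the classical ``a function vanishing at a point is controlled by the integral of its derivative'' estimate, transported verbatim to the abstract pair $(L,\mu)$. The mild caveats are that $\mu$ should be a nonnegative measure with $\mu([0,1])<\infty$ (needed so that the constant is finite and the Cauchy--Schwarz step is legitimate), and that $Lf$ should be square-integrable against $\mu$ (otherwise both sides are infinite and there is nothing to prove). Specializing to the case of interest in the paper, namely $L=L_r$ and $d\mu=r\,dr$, one has $\mu([0,1])=\int_0^1 r\,dr=\tfrac12$, so \eqref{e:PoincareIneqforL&mu} reads $\int_0^1|f|^2\,r\,dr\le\tfrac14\int_0^1|L_r f|^2\,r\,dr$, with the hypothesis $f(r_0)=0$ guaranteed in applications by the boundary condition $\vv|_{r=0,1}\equiv0$ or by the compatibility condition \eqref{e:CompatibilityCondition}.
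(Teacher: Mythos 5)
Your proposal is correct and follows essentially the same route as the paper's own proof: represent $f(r)=\int_{r_0}^r Lf\,d\mu$ via \eqref{e:FTCforL&mu} and the vanishing at $r_0$, bound pointwise by $\int_0^1|Lf|\,d\mu$, apply Cauchy--Schwarz to pick up one factor of $\mu([0,1])$, and integrate in $r$ against $d\mu$ to obtain the second factor. The additional remarks on nonnegativity and finiteness of $\mu$ and on the specialization $L=L_r$, $d\mu=r\,dr$ are accurate and harmless.
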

\begin{proof}
It follows from $f(r_0)=0$ and \eqref{e:FTCforL&mu} that for any $r\in [0,1]$,
\[
f(r) = \int^r_{r_0} L f \; d\mu,
\]
and hence, by the Cauchy-Schwarz inequality,
\[
|f(r)|^2 \le \left| \int^1_{0} |L f| \; d\mu \right|^2 \le \mu \left( [0,1] \right) \int^1_0 |Lf|^2 \; d\mu.
\]
Integrating the above inequality with respect to the measure $\mu$ over $[0,1]$, we prove inequality \eqref{e:PoincareIneqforL&mu}.
\end{proof}

Furthermore, we also have the following Sobolev type and interpolation inequalities:

\begin{prop}[Sobolev Type Inequality]\label{prop:SobolevIneq}
Define $L_r:=\frac{1}{r}\dr$. There exists a universal constant $C>0$ such that
\begin{equation}\label{e:sobolevinq}
\|f\|_{L^\infty (rdrdx)}\leq C \left\{ \|f\|_{L^2 (rdrdx)} + \|\dx f\|_{L^2 (rdrdx)} + \|L_r^2 f\|_{L^2 (rdrdx)} \right\},
\end{equation}
for any function $f:\T\times (0,1) \to\R$.
\end{prop}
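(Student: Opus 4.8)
The plan is to reduce \eqref{e:sobolevinq} to a flat anisotropic Sobolev inequality and then prove the latter by a Fourier-in-$x$ argument combined with two elementary one-dimensional estimates in the radial variable. By a routine density argument it suffices to treat $f$ smooth with finite right-hand side.

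First I would apply the change of variable $a:=\tfrac12 r^2$ from \eqref{e:Defnofa}, under which $L_r=\da$ and $rdr=da$ by \eqref{e:da&L_r&rdr}, and $r\in(0,1)$ corresponds to $a\in(0,\tfrac12)$. Since the essential supremum is insensitive to replacing $rdrdx$ by $dxda$, the inequality \eqref{e:sobolevinq} becomes
\[
\|f\|_{L^\infty(\T\times(0,1/2))}\le C\left(\|f\|_{L^2(dxda)}+\|\dx f\|_{L^2(dxda)}+\|\da^2 f\|_{L^2(dxda)}\right)
\]
with a universal constant $C$. Now expand in the periodic variable, $f(x,a)=\sum_{k\in\Z}\hat f_k(a)e^{2\pi i k x}$, so that $\|f\|_{L^\infty}\le\sum_k\|\hat f_k\|_{L^\infty_a}$, while Parseval gives $\sum_k(1+|k|)^2\|\hat f_k\|_{L^2_a}^2\le C(\|f\|_{L^2}^2+\|\dx f\|_{L^2}^2)$ and $\sum_k\|\hat f_k''\|_{L^2_a}^2=\|\da^2 f\|_{L^2}^2$.

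For each mode I would combine two standard one-dimensional facts on the interval $(0,\tfrac12)$, both proved by the fundamental theorem of calculus \eqref{e:FTCforL_r&rdr} and Cauchy--Schwarz in the spirit of Proposition \ref{prop:PoincareIneq}: the Sobolev embedding $\|g\|_{L^\infty}^2\le C(\|g\|_{L^2}^2+\|g\|_{L^2}\|g'\|_{L^2})$ and the interpolation inequality $\|g'\|_{L^2}^2\le C(\|g\|_{L^2}\|g''\|_{L^2}+\|g\|_{L^2}^2)$. Substituting the second into the first and taking square roots yields, for each $k$,
\[
\|\hat f_k\|_{L^\infty_a}\le C\left(\|\hat f_k\|_{L^2_a}+\|\hat f_k\|_{L^2_a}^{3/4}\|\hat f_k''\|_{L^2_a}^{1/4}\right).
\]
Summing over $k$, the linear term is controlled by Cauchy--Schwarz against the weight $(1+|k|)^{-1}$; for the interpolated term I would write $\|\hat f_k\|_{L^2_a}^{3/4}\|\hat f_k''\|_{L^2_a}^{1/4}=\bigl[(1+|k|)\|\hat f_k\|_{L^2_a}\bigr]^{3/4}\,\|\hat f_k''\|_{L^2_a}^{1/4}\,(1+|k|)^{-3/4}$ and apply H\"older with exponents $\tfrac83,8,2$, producing the factors $\bigl(\sum_k(1+|k|)^2\|\hat f_k\|_{L^2_a}^2\bigr)^{3/8}$, $\bigl(\sum_k\|\hat f_k''\|_{L^2_a}^2\bigr)^{1/8}$ and the convergent numerical series $\bigl(\sum_k(1+|k|)^{-3/2}\bigr)^{1/2}$. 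This bounds $\|f\|_{L^\infty}$ by $C(\|f\|_{L^2}+\|\dx f\|_{L^2})+C(\|f\|_{L^2}+\|\dx f\|_{L^2})^{3/4}\|\da^2 f\|_{L^2}^{1/4}$, and Young's inequality (with the conjugate pair $\tfrac43,4$) then gives the claimed estimate; undoing the change of variable completes the proof.

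The main obstacle is precisely the summation over Fourier modes: using only the crude interpolation $\|g'\|_{L^2}^2\lesssim\|g\|_{L^2}\|g''\|_{L^2}$ leaves the divergent series $\sum_k(1+|k|)^{-1}$, so it is essential first to pass through the one-dimensional embedding $H^1_a\hookrightarrow L^\infty_a$, which upgrades the radial power to $3/4$ and makes the residual weight $(1+|k|)^{-3/2}$ summable. (This reflects the fact that a naive chain of one-dimensional inequalities does not close, since each ``$\sup$ in $x$'' step would otherwise cost an extra $x$-derivative that is not available.) The remaining steps are routine.
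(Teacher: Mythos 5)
Your proposal is correct, and its first step is exactly the paper's: the change of variable $a=\tfrac12 r^2$, under which $L_r=\da$ and $rdrdx=dadx$, reduces \eqref{e:sobolevinq} to the flat anisotropic inequality \eqref{e:sobolevinqindadx} on $\T\times(0,\tfrac12)$. Where you diverge is that the paper stops there and simply cites Lemma B.2 of \cite{MWCPAM} for \eqref{e:sobolevinqindadx}, whereas you supply a self-contained proof: Fourier expansion in $x$, the one-dimensional Agmon-type embedding and the interpolation $\|g'\|_{L^2}^2\lesssim \|g\|_{L^2}\|g''\|_{L^2}+\|g\|_{L^2}^2$ on the bounded interval (correctly stated with the lower-order term, so the fixed interval length makes the constants universal), and then the mode-wise bound $\|\hat f_k\|_{L^\infty_a}\lesssim \|\hat f_k\|_{L^2_a}+\|\hat f_k\|_{L^2_a}^{3/4}\|\hat f_k''\|_{L^2_a}^{1/4}$ summed via H\"older with exponents $\tfrac83,8,2$; the exponent bookkeeping checks out ($\tfrac38+\tfrac18+\tfrac12=1$, and $\sum_k(1+|k|)^{-3/2}<\infty$), and your remark that the cruder $3/4$-free interpolation would leave the divergent series $\sum_k(1+|k|)^{-1}$ correctly identifies why the intermediate $L^\infty_a$ embedding is needed. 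What your route buys is independence from the external reference, at the cost of the Fourier/summation bookkeeping; the paper's route is shorter but not self-contained. Two minor points to polish if you write this up: justify that $\sum_k\|\hat f_k\|_{L^\infty_a}<\infty$ implies the series converges uniformly to (a representative of) $f$, so the essential supremum is indeed controlled; and note that the essential suprema with respect to $rdrdx$ and $dadx$ coincide because the two measures are mutually absolutely continuous.
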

\begin{proof}[Outline of the Proof]
Let $a:=\frac{1}{2} r^2$. Then $\partial_a = \frac{1}{r}\dr = L_r$ and $dadx$ = $rdrdx$. Therefore, inequality \eqref{e:sobolevinq} is equivalent to
\begin{equation}\label{e:sobolevinqindadx}
\|f\|_{L^\infty (dadx)}\leq C \left\{ \|f\|_{L^2 (dadx)} + \|\dx f\|_{L^2 (dadx)} + \|\partial_a^2 f\|_{L^2 (dadx)} \right\},
\end{equation}
which can be shown by elementary methods. For instance, see Lemma B.2 in \cite{MWCPAM} for the proof of \eqref{e:sobolevinqindadx} in a similar domain.
\end{proof}

\begin{prop}[Interpolation Inequality]
For any integers $s>0$ and $s' \in [0,s)$, there exists a constant $C_{s,s'}>0$ such that
\begin{equation}\label{e:InterpolationIneq}
\|f\|_{H^{s'}_L} \leq C_{s,s'}  \|f\|_{H^{s}_L}^\frac{s'}{s} \|f\|_{L^2(rdrdx)}^{1-\frac{s'}{s}},
\end{equation}
where the norm $\|\cdot\|_{H^s_L}$ is defined in \eqref{e:defnH}.
\end{prop}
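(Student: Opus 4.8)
The plan is to deduce the weighted interpolation inequality \eqref{e:InterpolationIneq} from the classical Gagliardo--Nirenberg interpolation inequality between Sobolev spaces, by means of the same change of variable used in the proof of Proposition~\ref{prop:SobolevIneq}. Setting $a:=\frac{1}{2}r^2$, we have by \eqref{e:da&L_r&rdr} that $\partial_a=L_r$ and $da=rdr$; hence for every multi-index $\alpha$ the expression $\dx^{\alpha_1}L_r^{\alpha_2}f$ written in the variables $(x,a)$ is precisely $\dx^{\alpha_1}\partial_a^{\alpha_2}f$, and $\|\dx^{\alpha_1}L_r^{\alpha_2}f\|_{L^2(rdrdx)}=\|\dx^{\alpha_1}\partial_a^{\alpha_2}f\|_{L^2(dadx)}$. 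Consequently the norm $\|\cdot\|_{H^m_L}$ from \eqref{e:defnH} equals the standard integer Sobolev norm $\|\cdot\|_{H^m(\T\times(0,\frac{1}{2}))}$, and it suffices to prove
\[
\|f\|_{H^{s'}(\T\times(0,\frac{1}{2}))}\le C_{s,s'}\,\|f\|_{H^{s}(\T\times(0,\frac{1}{2}))}^{\frac{s'}{s}}\,\|f\|_{L^2(\T\times(0,\frac{1}{2}))}^{1-\frac{s'}{s}}.
\]

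To prove this I would first pass to the full cylinder $\T\times\R$. Since $\T\times(0,\frac{1}{2})$ is a bounded interval (in $a$) times a torus (in $x$), there is a bounded linear extension operator $E$ mapping $H^m(\T\times(0,\frac{1}{2}))$ into $H^m(\T\times\R)$ continuously for every integer $0\le m\le s$, with operator norms depending only on $s$ --- e.g. a standard finite-order reflection extension at the two endpoints $a=0,\frac{1}{2}$, combined with a cut-off, periodic in $x$. Using $\|f\|_{H^{s'}(\T\times(0,\frac{1}{2}))}\le\|Ef\|_{H^{s'}(\T\times\R)}$ together with the bounds $\|Ef\|_{H^{s}(\T\times\R)}\le C_s\|f\|_{H^{s}(\T\times(0,\frac{1}{2}))}$ and $\|Ef\|_{L^2(\T\times\R)}\le C_s\|f\|_{L^2(\T\times(0,\frac{1}{2}))}$, the problem reduces to the interpolation inequality on $\T\times\R$. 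The latter follows at once from Plancherel's theorem (Fourier series in $x$, Fourier transform in the $\R$-variable) and H\"older's inequality on the frequency side with conjugate exponents $\frac{s}{s'}$ and $\frac{s}{s-s'}$: writing $g:=Ef$ and $\langle\zeta\rangle^2:=1+|\zeta|^2$,
\[
\int\langle\zeta\rangle^{2s'}|\widehat g|^2\,d\zeta=\int\bigl(\langle\zeta\rangle^{2s}|\widehat g|^2\bigr)^{\frac{s'}{s}}\bigl(|\widehat g|^2\bigr)^{1-\frac{s'}{s}}\,d\zeta\le\Bigl(\int\langle\zeta\rangle^{2s}|\widehat g|^2\,d\zeta\Bigr)^{\frac{s'}{s}}\Bigl(\int|\widehat g|^2\,d\zeta\Bigr)^{1-\frac{s'}{s}},
\]
that is, $\|g\|_{H^{s'}(\T\times\R)}\le\|g\|_{H^{s}(\T\times\R)}^{s'/s}\|g\|_{L^2(\T\times\R)}^{1-s'/s}$ for the Bessel-potential norms, which are equivalent (with constants depending only on the order) to the classical integer Sobolev norms. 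Chaining these inequalities and absorbing the constants yields \eqref{e:InterpolationIneq} with a constant depending only on $s$ and $s'$.

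The only genuinely technical ingredient is the existence of a single extension operator bounded simultaneously on $L^2$ and on $H^s$ (indeed on all $H^m$, $0\le m\le s$); for a bounded interval times a torus this is entirely standard (Seeley/Babich reflection, or Stein's universal extension operator), and I would simply cite it, exactly as Proposition~\ref{prop:SobolevIneq} invokes Lemma~B.2 of \cite{MWCPAM}. If a fully self-contained argument is preferred, one can instead avoid any extension by expanding $f$ in a Fourier series in $x$, applying the one-dimensional interpolation inequality on the interval $(0,\frac{1}{2})$ to each Fourier coefficient $\hat f_k(a)$ with the frequency weight $1+k^2$ incorporated, and then summing over $k\in\Z$ with the discrete H\"older inequality; this is the main computation to perform in that case, but it is routine, so I do not expect any obstacle beyond the bookkeeping already implicit in the statement.
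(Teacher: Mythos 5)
Your proposal is correct and follows essentially the same route as the paper: the change of variables $a=\frac{1}{2}r^2$, under which $\partial_a=L_r$ and $da=rdr$, identifies $\|\cdot\|_{H^m_L}$ with the standard $H^m(\T\times(0,\frac{1}{2}))$ norm, reducing \eqref{e:InterpolationIneq} to the classical Sobolev interpolation inequality. The only difference is that the paper stops there and cites the standard inequality, whereas you additionally supply a (correct, standard) proof of it via an extension operator and Plancherel.
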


\begin{proof}[Outline of the Proof]
Similar to the proof of Proposition \ref{prop:SobolevIneq}, we define $a:=\frac{1}{2} r^2$. Since $\partial_a = \frac{1}{r}\dr = L_r$ and $dadx$ = $rdrdx$, we have
\begin{align*}
\|f\|_{H^s_L}^2 : & = \sum_{|\alpha| \le s} \int_\T \int^1_0 |\dx^{\alpha_1} L_r^{\alpha_2} f|^2 \; rdrdx \\
& = \sum_{|\alpha| \le s} \int_\T \int^{\frac{1}{2}}_0 |\dx^{\alpha_1} \partial_a^{\alpha_2} f|^2 \; dadx =:\|f\|_{H^s(dadx)}^2,
\end{align*}
where $\|\cdot\|_{H^s(dadx)}$ is the standard $H^s$ norm with respect to $x$ and $a$. As a result, the interpolation inequality \eqref{e:InterpolationIneq} is equivalent to
\[
\|f\|_{H^{s'}(dadx)} \leq C_{s,s'}  \|f\|_{H^{s}(dadx)}^\frac{s'}{s} \|f\|_{L^2(dadx)}^{1-\frac{s'}{s}},
\]
which is just the the standard interpolation inequality. Thus, \eqref{e:InterpolationIneq} holds.
\end{proof}

%===========================================================
%
%        Appendix C : Properties for Three Dimensional Smooth Vector Fields
%
%===========================================================
\section{Properties for Three Dimensional Smooth Vector Fields}\label{app:3DSmoothVectorFields}

The aim of this appendix is to provide a brief explanation why the unique solution obtained in Theorem \eqref{thm:Well-posed} is a smooth vector field in three spatial dimensions. We will first quote the characterization result in \cite{LW09}, and then, apply this to verify the regularity of our solution.

In \cite{LW09}, Liu and Wang gave a characterization of smoothness of axisymmetric and divergence-free vector fields in terms of the pole conditions (see $\eqref{e:Conditionforpsi}_2$ below), which are compatibility conditions at the axis of symmetry in a certain sense. Using the notation in this paper, we can restate\footnote{The way that we state Proposition \ref{prop:Liu&Wang} is slightly different than the original way in \cite{LW09}. For the original statement, please refer to \cite{LW09}.} a special case of Lemma 2 of \cite{LW09} regarding axisymmetric vector fields without swirl as follows:

\begin{prop}[Special Case of Lemma 2 in \cite{LW09}]\label{prop:Liu&Wang}
Denote $r:=\sqrt{y^2+z^2}$. Let $e_x$ and $e_r$ be the unit vectors in the horizontal (i.e., $x$) and radial (i.e., $r$) directions respectively. As a three-dimensional vector field, $\vec{u}:=u^x(x,r) e_x + u^r(x,r) e_r$ is divergence-free and belongs to $C^k(\R^3)$ if and only if there exists a scalar-valued function $\psi :=\psi (x,r)\in C^{k+1}(\R\times\overline{\R^+})$ such that
\begin{equation}\label{e:Conditionforpsi}
\left\{
\begin{aligned}
u^x & = L_r (r\psi), \quad\quad  u^r = -\dx\psi, \\
\left.\dr^{2m}\psi\right|_{r=0^+} &\equiv 0 \quad\quad  \text{for all }0\leq 2m \leq k,
\end{aligned}
\right.
\end{equation}
where $L_r:=\frac{1}{r}\dr$.
\end{prop}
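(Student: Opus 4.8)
The plan is to reduce Proposition \ref{prop:Liu&Wang} to two standard facts about axisymmetric smoothness at the axis $r=0$, and then to construct $\psi$ by integration while tracking radial derivatives. The two facts (which I would either cite from the literature or prove by the usual Borel/Whitney extension argument near the axis) are: (a) a function of $(x,r)$ with $r=\sqrt{y^2+z^2}$ lies in $C^k(\R^3)$ iff it lies in $C^k(\R\times\overline{\R^+})$ and all its odd-order radial derivatives up to order $k$ vanish at $r=0$ --- equivalently it has an ``even'' Taylor structure $\sum_{2j\le k}f_{2j}(x)r^{2j}+o(r^k)$ with $f_{2j}\in C^{k-2j}$; and (b) a radial vector field $w(x,r)\,e_r$ or $w(x,r)\,e_\theta$ lies in $C^k(\R^3)$ iff $w$ has the complementary ``odd'' structure, i.e. $w\in C^k(\R\times\overline{\R^+})$ with $\dr^{2m}w|_{r=0}\equiv 0$ for all $2m\le k$.

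For the direction ($\Leftarrow$): given such a $\psi$, I would set $u^x:=L_r(r\psi)=\frac{\psi}{r}+\dr\psi$ and $u^r:=-\dx\psi$; the case $m=0$ of the hypothesis gives $\psi|_{r=0}\equiv 0$, which cancels the apparent singularity of $\frac{\psi}{r}$, and incompressibility is immediate since $\dx u^x+\frac1r\dr(ru^r)=\frac1r\dr\dx(r\psi)-\frac1r\dr\dx(r\psi)=0$. It then remains to verify $u^x\in C^k(\R^3)$ and $u^r e_r\in C^k(\R^3)$: by facts (a) and (b) this reduces to checking that the hypotheses $\dr^{2m}\psi|_{r=0}\equiv 0$ ($2m\le k$) force $\frac{\psi}{r}$ and $\dr\psi$ to have even radial structure (hence so does $u^x$), and $\dx(\frac{\psi}{r})=-\frac{u^r}{r}$ as well --- a short Taylor-expansion computation.

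For the direction ($\Rightarrow$): given $\vec u=u^x e_x+u^r e_r\in C^k(\R^3)$ divergence-free, fact (a) supplies $u^x=\sum_{2j\le k}a_{2j}(x)r^{2j}+o(r^k)$ with $a_{2j}\in C^{k-2j}$, and fact (b) supplies the odd expansion $u^r=\sum b_{2j+1}(x)r^{2j+1}+o(r^k)$. Matching powers of $r$ in $\dx u^x+\frac1r\dr(ru^r)=0$ forces the relation $(2j+2)b_{2j+1}=-a_{2j}'$ between the Taylor coefficients. I would then define $\psi$ globally by the stream-function formula $r\psi:=\int_0^r r'\,u^x(x,r')\,dr'$ (equivalently, $\psi$ is the azimuthal vector potential: $\vec u=\nabla\times(\psi\,e_\theta)$ via the cylindrical curl identity $\nabla\times(\psi\,e_\theta)=\frac1r\dr(r\psi)\,e_x-\dx\psi\,e_r$, using that the vorticity of $\vec u$ is purely azimuthal, Remark \ref{rem:vorticityforAHEE}); the divergence-free condition then gives $-\dx\psi=u^r$. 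By construction $u^x=L_r(r\psi)$, $\psi$ is odd in $r$ (so $\dr^{2m}\psi|_{r=0}\equiv 0$), and the coefficient of $r^{2j+1}$ in $\psi$ is $\frac{a_{2j}}{2j+2}\in C^{k-2j}=C^{(k+1)-(2j+1)}$ --- exactly the regularity pattern of a $C^{k+1}(\R\times\overline{\R^+})$ function, which yields $\psi\in C^{k+1}$. (One may alternatively obtain $\psi\in C^{k+1}$ from elliptic regularity for the scalar operator satisfied by the azimuthal potential.)

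The step I expect to be the main obstacle is the precise bookkeeping of radial derivative counts and parities at the axis in facts (a), (b) and in the two constructions above: making rigorous how many radial derivatives are gained by integrating against $r\,dr$, how many are lost by dividing by $r$, and checking that the top-order ($r^k$- or $r^{k+1}$-) terms of the Taylor expansions do not spoil membership in $C^k(\R^3)$ or in $C^{k+1}$. This is exactly where the specific hypotheses ``$2m\le k$'' and ``$\psi\in C^{k+1}$'' (rather than $C^k$) are needed; the algebraic identities and the divergence-free computation are routine.
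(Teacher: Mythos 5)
First, note that the paper never proves this proposition: it is imported (in slightly restated form, as the paper's own footnote warns) from Lemma 2 of \cite{LW09}, and Appendix C only applies it. So there is no in-paper argument to compare against. Moreover, your facts (a) and (b) are precisely the parity characterization of axisymmetric $C^k$ smoothness at the axis that constitutes the companion lemma of \cite{LW09}, so your proposal is not an independent route so much as a re-derivation of the quoted lemma from its companion; citing those facts and then doing the bookkeeping is essentially the same work as citing Lemma 2 directly.

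More importantly, the step you yourself flag as the main obstacle is a genuine gap, and under the hypotheses as literally stated it cannot be closed: the ($\Leftarrow$) direction is false for odd $k$. Take $k=3$ and $\psi(x,r):=r^4$. Then $\psi$ is smooth, $\psi|_{r=0}=\dr^2\psi|_{r=0}=0$ (which is all that ``$0\le 2m\le k$'' requires), and $u^r=-\dx\psi=0$, yet $u^x=L_r(r\psi)=5r^3=5(y^2+z^2)^{3/2}$ is not $C^3(\R^3)$: its third derivatives, e.g. $\partial_y^3 u^x = 5\left(9y/r-3y^3/r^3\right)$, have no limit on the axis, equivalently $\dr^3u^x|_{r=0}=30\neq 0$, violating your own fact (a). The mechanism is exactly the top-order term you worried about: when $k$ is odd, the even derivative $\dr^{k+1}\psi|_{r=0}$ is unconstrained by the stated pole conditions, and it contaminates $\dr^{k}u^x|_{r=0}$ through $u^x=\frac{\psi}{r}+\dr\psi$, so no ``short Taylor-expansion computation'' can force the even structure of $u^x$ up to order $k$. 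To have a provable statement you must strengthen the pole conditions to $2m\le k+1$ (or work with the original formulation in \cite{LW09}, which the paper's footnote says differs from this restatement); only then does your sufficiency argument go through. Your ($\Rightarrow$) direction, by contrast, can be completed as sketched: with $r\psi:=\int_0^r\tilde r\,u^x\,d\tilde r$ one has $\dx\psi=-u^r\in C^k$ and $\dr\psi=u^x-\frac{\psi}{r}\in C^k$ (since $\frac{\psi}{r}=\int_0^1 t\,u^x(x,tr)\,dt$), which yields $\psi\in C^{k+1}(\R\times\overline{\R^+})$ and the pole conditions, granted fact (a) for the given $C^k$ field.
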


Now, we are going to apply Proposition \ref{prop:Liu&Wang} to provide a better understanding of our solutions to the axisymmetric hydrostatic Euler equations \eqref{e:AHEE}. Up to a Galilean transformation (c.f. Remark \ref{rem:CompatibilityCondition}), the axisymmetric velocity field $\vec{u}$ obtained in Theorem \ref{thm:Well-posed} belongs to
\begin{align*}
\mathcal{V}^s_{as}:= \bigg\{ & \vec{u}:=u^x(x,r) e_x + u^r(x,r) e_r ; \; \int_0^1 u^x \; rdr \equiv 0, \; \left. u^r\right|_{r=1}=0,\\
& \quad\quad \dx u^x + L_r (r u^r) = 0, \;\text{and } \ww:=L_r u^x \in H^s_L  \bigg\},
\end{align*}
where $L_r :=\frac{1}{r}\dr$, and the function space $H^s_L$ is defined in \eqref{e:DefnofH^s_L}. Indeed, as a three-dimensional vector field, our solution $\vec{u}$ is $C^{s-2}$ at the axis of symmetry (i.e., $r=0$) as well. More precisely, we have the following
\begin{prop}[Regularity of $\mathcal{V}^s_{as}$ Vector Fields]
Let $r:=\sqrt{y^2+z^2}$ and $\Omega:=\left\{(x,y,z);\;x\in\T,\; y^2+z^2<1\right\}$. For any integer $s\geq 2$, \begin{equation}\label{e:V^s_asinC^s-2}
\mathcal{V}^s_{as}\subseteq C^{s-2} (\Omega).
\end{equation}
\end{prop}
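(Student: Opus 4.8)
The plan is to derive \eqref{e:V^s_asinC^s-2} from the Liu--Wang characterization, Proposition \ref{prop:Liu&Wang}, applied with $k=s-2$. Fix $\vec u=u^x e_x+u^r e_r\in\mathcal{V}^s_{as}$ and set $\ww:=L_r u^x\in H^s_L$. It suffices to produce a stream function $\psi=\psi(x,r)$ that lies in $C^{s-1}(\T\times\overline{\R^+})$, represents $\vec u$ through $\eqref{e:Conditionforpsi}_1$, and satisfies the pole conditions $\eqref{e:Conditionforpsi}_2$ up to order $k=s-2$. The candidate is
\[
\psi:=-\frac1r\,\mathcal{A}(\ww),
\]
where $\mathcal{A}$ is the Dirichlet solver \eqref{e:DefnofA}, so that $-L_r^2\mathcal{A}(\ww)=\ww$ and $\mathcal{A}(\ww)|_{r=0,1}=0$.

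First I would verify that this $\psi$ represents $\vec u$; this is just the Biot--Savart relation for members of $\mathcal{V}^s_{as}$. Since $L_ru^x=\ww=L_r\bigl(-L_r\mathcal{A}(\ww)\bigr)$, the functions $u^x$ and $-L_r\mathcal{A}(\ww)$ differ by a function of $x$ alone, and both have vanishing $\int_0^1(\,\cdot\,)\,rdr$ (the second one by the fundamental theorem of calculus \eqref{e:FTCforL_r&rdr} together with $\mathcal{A}(\ww)|_{r=0,1}=0$), so $u^x=-L_r\mathcal{A}(\ww)=L_r(r\psi)$. Next, the incompressibility condition $\dx u^x+L_r(ru^r)=0$ in the definition of $\mathcal{V}^s_{as}$ gives $L_r(ru^r)=L_r(\dx\mathcal{A}(\ww))$, so $ru^r-\dx\mathcal{A}(\ww)$ is independent of $r$; evaluating at $r=1$, where $u^r=0$ and $\mathcal{A}(\ww)=0$, yields $ru^r=\dx\mathcal{A}(\ww)=-r\,\dx\psi$, i.e.\ $u^r=-\dx\psi$. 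As a by-product $ru^r\to0$ as $r\to0^+$, so $\vec u$ automatically satisfies the pole condition \eqref{e:PoleConditnfor u^r}.

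The heart of the matter is the regularity of $\psi$. I would pass to the area variable $a=\tfrac12 r^2$ of \eqref{e:Defnofa}; by \eqref{e:da&L_r&rdr} and the discussion in Subsection \ref{ss:ExistenceviaReduction}, $L_r$ becomes $\da$, $rdrdx$ becomes $dadx$, the space $H^s_L$ becomes the standard Sobolev space $H^s(\T\times(0,\tfrac12))$, and $\mathcal{A}(\ww)$ solves the Dirichlet problem $-\da^2\mathcal{A}(\ww)=\ww$, $\mathcal{A}(\ww)|_{a=0,1/2}=0$, which is one-dimensional in $a$. Since this solver commutes with $\dx$ and gains two $a$-derivatives, for $m\ge2$ we have $\dx^j\da^m\mathcal{A}(\ww)=-\dx^j\da^{m-2}\ww\in H^{\,s-j-m+2}(\T\times(0,\tfrac12))$, which by the two-dimensional embedding $H^2\hookrightarrow C^0$ (a consequence of \eqref{e:sobolevinq} and the Poincar\'e inequality \eqref{e:PoincareIneqforL&mu}) is continuous on $\T\times[0,\tfrac12]$ as soon as $j+m\le s$; for $m=1$, integrating the ODE writes $\dx^j\da\mathcal{A}(\ww)$ as $\dx^j c(x)-\int_0^a\dx^j\ww\,d\tilde a$ with $c$ a bounded linear functional of $\ww$, which one checks is continuous for $j\le s-1$. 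Because $\mathcal{A}(\ww)$ vanishes at $a=0$, Hadamard's lemma gives $\mathcal{A}(\ww)(x,a)=a\,g(x,a)$ with $g(x,a):=\int_0^1\da\mathcal{A}(\ww)(x,ta)\,dt$, so that $\dx^j\da^l g(x,a)=\int_0^1 t^l\,\dx^j\da^{l+1}\mathcal{A}(\ww)(x,ta)\,dt$ and the previous step (applied with $m=l+1\ge1$, $j+l\le s-1$) shows $g\in C^{s-1}(\T\times[0,\tfrac12])$. Returning to $r$, $\psi(x,r)=-\tfrac12\,r\,g(x,\tfrac12 r^2)$ is the product of the smooth function $-\tfrac12 r$ with the composition of $g$ with the smooth map $r\mapsto\tfrac12 r^2$, hence $\psi\in C^{s-1}(\T\times\overline{\R^+})$.

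Finally, $g(x,\tfrac12 r^2)$ depends on $r$ only through $r^2$ and hence is even in $r$, so $\psi$ is odd in $r$ and $\dr^{2m}\psi|_{r=0^+}\equiv0$ for all $2m\le s-1$, in particular for $2m\le s-2$. Thus all hypotheses of Proposition \ref{prop:Liu&Wang} with $k=s-2$ hold (extending $\psi$ in $r$ to $\overline{\R^+}$ while keeping it $C^{s-1}$ and preserving the pole conditions, or using the local form of the characterization), whence $\vec u\in C^{s-2}(\R^3)$ and in particular $\vec u\in C^{s-2}(\Omega)$, which is \eqref{e:V^s_asinC^s-2}. I expect the main obstacle to be exactly the regularity bookkeeping in the third paragraph: the crude count ``$\mathcal{A}$ gains two radial derivatives'' only yields $\mathcal{A}(\ww)\in C^{s-1}$, and a naive use of Hadamard's lemma would then give $g\in C^{s-2}$ and $\psi\in C^{s-2}$, one derivative short. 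The resolution is that differentiation is cheap in $a$ (where $\mathcal{A}(\ww)$ is two degrees smoother) but expensive in $x$; forming $g=\mathcal{A}(\ww)/a$ trades each lost degree for an extra cheap $a$-derivative on $\mathcal{A}(\ww)$, and one must check that no genuine $s$-th order $x$-derivative of $\mathcal{A}(\ww)$ is ever needed --- which, since $H^s(\T)$ does not embed into $C^s$, would otherwise be fatal.
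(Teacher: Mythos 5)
Your proposal is correct, and it rests on the same skeleton as the paper's argument: reduce to the Liu--Wang characterization (Proposition \ref{prop:Liu&Wang}) with $k=s-2$ and exhibit a stream function. In fact your candidate is literally the paper's: the paper takes $\phi(x,r):=-\int_r^1 u^x\,\tilde r\,d\tilde r$ and $\psi=\phi/r$, and since $L_r^2\phi=\ww$, $\phi|_{r=1}=0$ and $\phi|_{r=0}=\int_0^1 u^x\,rdr=0$, uniqueness of the Dirichlet problem gives $\phi=-\mathcal{A}(\ww)$, i.e.\ your $\psi=-\mathcal{A}(\ww)/r$ is the same function. Where you genuinely diverge is in how the hypotheses of Proposition \ref{prop:Liu&Wang} are verified near the axis. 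The paper checks $C^{s-1}$ regularity of $\psi$ away from $r=0$ and then handles the axis by L'H\^{o}pital together with the explicit identity $\dr^{2m+1}\phi=\sum_{i=m+1}^{2m+1}a_{m,i}\,r^{2i-2m-1}L_r^i\phi$, using the Sobolev and Poincar\'{e} inequalities to bound $L_r^i\phi$ and conclude the pole conditions. You instead pass to the area variable $a=\tfrac12 r^2$, exploit that the one-dimensional Dirichlet solver commutes with $\dx$ and gains two $a$-derivatives, factor $\mathcal{A}(\ww)=a\,g(x,a)$ by Hadamard's lemma with $g\in C^{s-1}$, and read off $\psi=-\tfrac{r}{2}\,g(x,\tfrac12 r^2)$, so that the $C^{s-1}$ regularity of $\psi$ \emph{up to and including} $r=0$ and the pole conditions (via oddness in $r$) come out simultaneously. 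Your route is somewhat more systematic: it delivers the full boundary regularity that Proposition \ref{prop:Liu&Wang} formally requires (the paper's outline only asserts regularity away from the axis and limits of derivatives at $r=0^+$), and it also spells out the Biot--Savart verification $u^x=L_r(r\psi)$, $u^r=-\dx\psi$, which the paper leaves implicit; the paper's computation, in exchange, is shorter and stays entirely in the $(x,r)$ variables. Your closing diagnosis of the derivative bookkeeping (cheap $a$-derivatives versus expensive $x$-derivatives, with at most $s-1$ $x$-derivatives of $\ww$ ever used) is exactly the right point, and the same accounting is what makes the paper's bound on $L_r^i\phi$ by $\|\ww\|_{H^s_L}$ work.
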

\begin{proof}[Outline of the Proof] Define $\phi(x,r):=-\int_r^1 u^x (x,\tilde{r}) \; \tilde{r}d\tilde{r}$. Then we have
\[
u^x = L_r \phi, \quad r u^r = -\dx \phi, \quad\text{and}\quad \left.\phi\right|_{r=0,1}\equiv 0.
\]
In other words, $\frac{\phi}{r}$ is equal to the scalar-valued function $\psi$ stated in Proposition \ref{prop:Liu&Wang}. Furthermore, using the Sobolev inequality \eqref{e:sobolevinq}, the Poincar\'{e} inequality \eqref{e:PoincareIneqforL&mu} and the standard density argument, one may verify that $\phi$ is $C^{s-1}$ outside any neighborhood of $r=0$, so is $\psi=\frac{\phi}{r}$.

According to Proposition \ref{prop:Liu&Wang}, it remains to verify the pole condition $\eqref{e:Conditionforpsi}_2$. It follows from a direct computation and L'H\^{o}pital's rule that
\begin{align*}
\lim_{r\to 0^+} \dr^j \psi & = \lim_{r\to 0^+} \dr^j \left(\frac{\phi}{r}\right) = \lim_{r\to 0^+} \frac{(-1)^j j!}{r^{j+1}}\sum_{i=0}^j \frac{(-1)^i r^i \dr^i \phi}{i!} \\
& = \lim_{r\to 0^+} \frac{(-1)^j j!}{(j+1)r^{j}}\dr\left(\sum_{i=0}^j \frac{(-1)^i r^i \dr^i \phi}{i!}\right) = \frac{1}{j+1} \lim_{r\to 0^+} \dr^{j+1} \phi. \\
\end{align*}
Therefore, it suffices to show that $\lim_{r\to 0^+} \dr^{2m+1} \phi=0$ for all non-negative integer $m\leq \frac{s}{2}-1$. Since $\dr = r L_r$, a direct computation yields
\begin{equation}\label{e:dr^2m+1phi}
\dr^{2m+1} \phi = \sum_{i=m+1}^{2m+1} a_{m,i} \; r^{2i-2m-1} L_r^i\phi,
\end{equation}
for some constants $a_{m,i}$. Using the Sobolev inequality \eqref{e:sobolevinq} and the Poincar\'{e} inequality \eqref{e:PoincareIneqforL&mu}, one may check that the absolute values of all $L_r^i\phi$ on the right hand side of \eqref{e:dr^2m+1phi} are bounded by $\|\ww\|_{H^s_L}$, and hence, passing to the limit $r\to 0^+$ in \eqref{e:dr^2m+1phi}, we obtain $\lim_{r\to 0^+} \dr^{2m+1} \phi=0$ for all non-negative integer $m\leq \frac{s}{2}-1$. This completes the proof and verifies \eqref{e:V^s_asinC^s-2}.
\end{proof}

%==============================================================
%
%       Bibliography
%
%===============================================================

%  \bibliographystyle{amsalpha}
%  \bibliography{}

\end{document}